\newcommand{\jump}[1]{\llbracket#1\rrbracket}
\newtheorem{remark}{\it Remark\/}
\title{Stabilized finite element methods for nonsymmetric,
  noncoercive and ill-posed problem. Part II: hyperbolic equations}
\author{Erik Burman\thanks{Department of Mathematics, 
University College London, London, 
UK--WC1E  6BT, 
United Kingdom; ({\tt e.burman@ucl.ac.uk})}
}
\begin{document}

\maketitle

\begin{abstract}
In this paper we consider stabilized finite element methods for hyperbolic transport equations without
coercivity. Abstract conditions for the convergence of the methods are
introduced and these conditions are shown to hold for three different
stabilized methods: the Galerkin least squares method, the continuous
interior penalty method and the discontinuous Galerkin method.
We consider both the standard stabilization methods and the
optimisation based method introduced in \cite{part1}. 
The main idea of the latter is to write the stabilized method in an optimisation
framework and select the discrete function for which a certain cost
functional, in our case stabilization term, is minimised.
Some numerical examples illustrate the theoretical investigations.
\end{abstract}


\newcommand{\cut}{c}
\def\IR{\mathbb R}
\def\Ext{\mbox{\textsf{E}}}

\section{Introduction}
Several finite element methods have been proposed for the computation
of hyperbolic problems, such as the SUPG method \cite{BH82,JNP84}, the
discontinuous Galerkin method \cite{RH73, LR74, JP86} and several
different weakly consistent, symmetric stabilization methods for
continuous approximation spaces, \cite{Gue99, Cod00, BH04, BB04}. In
most of these cases however the analysis relies on the satisfaction of a
coercivity condition. Indeed if a scalar hyperbolic transport equation
\begin{equation}\label{model_problem}
\beta \cdot \nabla u + \sigma u = f
\end{equation}
is considered, with data given on the inflow boundary, it is typically
assumed that there exists $\sigma_0 \in \mathbb{R}^+$ such that 
\begin{equation}\label{coercivity}
\sigma_0 \leq \inf_{x \in \Omega} \left(\sigma - \frac12 \nabla \cdot \beta\right).
\end{equation}
In for instance \cite{JNP84,JP86,AM09} the degenerate case
$\sigma_0=0$ is allowed using special exponentially weighted test
functions, which we will also exploit in this paper.

In practice this condition is quite restrictive and rules out many
important flow regimes such as exothermic reactions, compressible
flow fields or data assimilation problems with data given on the
outflow boundary. Our objective in the present paper is to propose an
analysis of stabilized finite element methods in the noncoercive case. Indeed
similarly as in the elliptic case \cite{Schatz74} the discrete
solutions of standard stabilized finite element methods are shown
to exist and have optimal convergence under a condition on the mesh
size. 
Unlike the elliptic case there appears to be no equivalent
result, even suboptimal, for the standard Galerkin method. This part
uses tools similar to those of \cite{JNP84,JP86,AM09}. Then we
show how the method
introduced in \cite{part1} can be applied to hyperbolic problems 
beyond the coercive regime of condition \eqref{coercivity}. The advantage of
this latter method is
that the mesh conditions under which the analysis holds are much less
restrictive and boundary conditions may be imposed on the outflow
boundary just as easily as on the inflow boundary, without modifying
the parameters of the method.
For a full motivation of
the method and analysis in the elliptic case we refer the reader to
\cite{part1}.

We will
consider the problem \eqref{model_problem} with smooth coefficients,
$\beta \in [W^{2,\infty}(\Omega)]^d$ and $\sigma \in 
W^{1,\infty}(\Omega)$. Boundary data will be given either on the inflow or the
outflow correponding to solving either the standard transport problem or
a model data assimilation problem. For such smooth physical parameters both cases can easily
be solved using the method of characteristics, provided that for each
$x \in \Omega$ there exists a streamline leading, in finite time, to the boundary where
data is imposed and $|\beta(x)|\ne 0$ for all $x \in \Omega$. In the
following we always assume that $\beta$ satisfies these
assumptions, unless otherwise stated, and that the stationary problem admits a unique, sufficiently
smooth solution. 

Problems on conservation form $\nabla \cdot (\beta u)$ are cast on the form \eqref{model_problem} by using the
product rule and including the low order term with coefficient $\nabla
\cdot \beta$ in $\sigma$. The present paper has the following structure. In section
\ref{sec:abstract} we propose an abstract analysis under certain
assumptions on the discrete bilinear form. Then in section
\ref{sec:stabilization} we give a detailed description of how three
different stabilization methods, the Galerkin least squares method (GLS),
the continuous interior penalty (CIP) method and the discontinuous Galerkin
method (DG) satisfy the assumptions of the abstract theory for the
case of the advection--reaction equation.  In all cases we prove that
the classical quasi optimal estimate
for stabilized methods holds
\[
\|u - u_h\|_{L^2(\Omega)} + \|h^{\frac12} \beta \cdot \nabla (u -
u_h)\|_{L^2(\Omega)} \leq C h^{k+\frac12} |u|_{H^{k+1}(\Omega)}.
\]
We also show how to include a model
problem for data assimilation in the analysis. Finally in section
\ref{numerics} we illustrate  the theory with some numerical examples.

\section{Abstract formulation}\label{sec:abstract}
Let $\Omega$ be a polygonal/polyhedral subset of $\mathbb{R}^d$. The
boundary of $\Omega$ will be denoted by $\partial \Omega$ and its
outward pointing normal by $n$.
We let $V,W$ denote two Hilbert spaces with norms $\|\cdot\|_V$ and $\|\cdot\|_W$.
The abstract weak formulation of the continuous problem takes
the form: find $u \in V$ such that 
\begin{equation}\label{forward}
a(u,v) = (f,v), \quad \forall v \in W
\end{equation}
with formal adjoint: find $z \in W$ such that
\begin{equation}\label{adjoint}
a(w,z) = (g, w) , \quad \forall w \in V.
\end{equation}
The bilinear form $a(\cdot,\cdot):V\times W
\rightarrow \mathbb{R}$ and the data $f$ are assumed to satisfy the assumptions of
Babuska's theorem \cite{Bab70} so that the problems \eqref{forward} and
\eqref{adjoint} are well posed. (See \cite{EG06} for an analysis of
\eqref{model_problem} in the coercive regime.) We denote the forward problem on strong form
$\mathcal{L} u = f$ and the adjoint problem on strong form
$\mathcal{L}^* z = g$. 
\begin{remark}
The analysis below never uses the full power of Babuska's theorem. We
only need to assume that \eqref{forward} admits a unique solution for
the given data and that certain discrete stability conditions are
satisfied by $a(\cdot,\cdot)$ as specified below. For the problems
considered herein the solution of \eqref{adjoint} will always be
$z=0$.
\end{remark}

\subsection{Finite element discretisation}
Let $\{\mathcal{T}_h\}_h$ denote a family of quasi uniform, shape
regular triangulations $\mathcal{T}_h:=\{K\}$, indexed by the maximum
triangle radius $h:=\max_{K \in \mathcal{T}_h} h_K$. The set of faces of the
triangulation will be denoted by $\mathcal{F}$ and
$\mathcal{F}_{int}$ denotes the subset of interior faces.  Let $X_h^k$ denote the finite element space of piecewise 
polynomial functions on
$\mathcal{T}_h$, 
$$
X_h^k := \{v_h \in L^2(\Omega): v_h\vert_{K} \in \mathbb{P}_k(K),\quad \forall K
\in \mathcal{T}_h\}.
$$
Here $\mathbb{P}_k(K)$ denotes the space of polynomials of degree
less than or equal to $k$ on a triangle $K$.  The $L^2$-scalar product over some {measurable} $X \subset \mathbb{R}^d$ is denoted $(\cdot,\cdot)_X$ and the associated
norm $\|\cdot\|_X$, the subscript is dropped whenever $X = \Omega$. We
will also use $\left<\cdot,\cdot\right>_Y$ to denote the $L^2$-scalar
product over $Y \subset \mathbb{R}^{d-1}$. For the element wise
$L^2$-scalar product and norm over $\Omega$ we will use the notation
$(\cdot,\cdot)_h:= \sum_{K \in \mathcal{T}_h} (\cdot,\cdot)_K$,
$\|\cdot\|_h:= (\cdot,\cdot)_h^{\frac12}$. 
In the estimates of the paper capital constants are
generic, whereas lower case constants are specific to the estimate. 
Sometimes capital constants will be given subscripts to point
to the main dependencies on parameters. We will also use $a \sim b$ to
stress an important dependence in $a$ on some parameter $b$, i.
e. $a = C b$, with $C$ assumed to be moderate.

We let $\pi_L$ denote the standard $L^2$-projection onto $X_h^k$ and
$i_h:C^0(\bar \Omega) \mapsto X_h^k$ the standard Lagrange interpolant. Recall that
for any function $u \in (V \cup W)\cap H^{k+1}(\Omega)$ there holds
\begin{equation}\label{approx}
\|u - i_h u\| + h \|\nabla(u - i_h u)\|+ h^2 \|D^2(u - i_h u)\|_h  \leq c_i h^{k+1} |u|_{H^{k+1}(\Omega)},
\end{equation}
where $D^2$ denotes the Hessian matrix and the matrix norm used is the
Frobenius norm. A similar result holds for $\pi_L$. If $\pi_L$ projects onto $X_h^k \cap
C^0(\bar \Omega)$ the same result holds under the assumption of local
quasi regularity of the mesh. The following discrete commutator
property follows by straightforward modifications of the result in
\cite{Berto99} and holds for $i_h$, the element-wise $L^2$-projection
onto $X_h^k$ and, under our assumptions
on the mesh, for the $L^2$-projection onto continuous finite element
functions. Here $\varphi \in W^{2,\infty}(\Omega)$, $0\leq n \leq 2$,
\begin{equation}\label{discrete_commutator}
\sum_{K\in \mathcal{T}_h} |\varphi u_h - i_h(\varphi u_h) |^2_{H^n(K)} \leq c^2_{dc,n,\varphi}
h^{-2n+2} \|u_h\|^2_{L^2(\Omega)}.
\end{equation}
We also note that the following inverse inequalities hold, $\exists
c_T, c_I \in \mathbb{R}^+$ such that
\begin{equation}\label{inverse_eq}
\begin{array}{l}
\|u\|_{\partial K} \leq c_T (h^{-\frac12} \|u\|_{K} + h^{\frac12}
\|\nabla u\|_K), \quad \forall u \in H^1(K)\\[3mm]
h_K^{-\frac12} \|u_h\|_{\partial K} + h_K \|\nabla u_h\|_K  \leq c_I
\|u_h\|_{K}, \quad \forall u_h \in \mathbb{P}_k(K).
\end{array}
\end{equation}
Let $V_h$ and $W_h$ denote two finite element spaces such that
$\mbox{dim} ~ V_h=  \mbox{dim} ~ W_h$ (in practice $V_h = W_h$  herein). Now we introduce a discrete
bilinear form $a_h(\cdot,\cdot):V_h \times W_h \mapsto \mathbb{R}$
associated to $a(\cdot,\cdot)$ 
and a stabilization operator $s_p(\cdot,\cdot):V_h \times W_h \mapsto \mathbb{R}$. The standard stabilized finite element
formulation for the problem \eqref{forward} takes the form, find $u_h
\in V_h$ such that
\begin{equation}\label{standardFEM}
a_h(u_h,v_h) + s_p(u_h,v_h) = (f,v_h) + s_p(u,v_h) \quad \forall v_h
\in W_h.
\end{equation}
Observe that since $s_p(u,v_h)$ appears in the right hand side,
we can only use stabilization operators such that this quantity is known.
As we shall see below, the noncoercivity of the form
$a_h(\cdot,\cdot)$ leads to problem dependent mesh conditions for the
well-posedness of \eqref{standardFEM}. To alleviate the conditions on the mesh we propose the following finite element method for the approximation
of \eqref{forward}, find $(u_h,z_h) \in V_h \times W_h$ such that
\begin{equation}\label{FEM}
\begin{array}{rcl}
a_h(u_h,w_h) + s_a(z_h,w_h) &=&(f,w_h) \\[3mm]
a_h(v_h,z_h) - s_p(u_h,v_h) &=& -s_p(u,v_h),
\end{array}
\end{equation}
for all $(v_h,w_h) \in V_h \times W_h$. Here $s_a(\cdot,\cdot)$ is a
stabilization term related to the adjoint equation that will be
discussed below. {Observe that we here solve
simultaneously \eqref{forward} and \eqref{adjoint}, with $g=0$ in the
latter equation. 
We will consider either continuous approximation spaces,
$V_h:=X^k_h\cap H^1(\Omega)$ or discontinuous approximation $V_h := X^k_h$.
The bilinear form
$a_h(\cdot,\cdot)$ is a discrete realisation of $a(\cdot,\cdot)$,
typically modified to account for the effect of nonconformity,
since in general $V_h \not \subset V$ and $W_h \not \subset W$. Weakly
imposed boundary conditions may be set in the form $a_h(\cdot,\cdot)$,
but below we have chosen to
impose them using $s_p(\cdot,\cdot)$ and $s_a(\cdot,\cdot)$ to obtain
a more unified analysis. In 
\eqref{FEM} stabilization can also be added in $a_h(\cdot,\cdot)$.
Our numerical experiments did not show any advantages of the addition
and this approach will not be pursued herein.

The bilinear forms
$s_a(\cdot,\cdot)$, $s_p(\cdot,\cdot)$ in \eqref{FEM} are  symmetric, positive semi-definite, stabilization
operators, defined on $[V_h \cup W_h]^2$. For
simplicity we will always assume that $u$ is
sufficiently regular so that strong consistency holds,
i.e. $s_p(u,v_h)$  is well defined. Note also that for the method to
make sense $s_p(u,v_h)$ must be known, either to be zero, or depending
only on known data. This will be the case below. The modifications of the 
analysis to the case of weakly consistent stabilization are
straightforward and not considered herein. The semi-norm on $V_h\cup W_h$ associated to the stabilization is defined by
\[
|x_h|_{S_y} := s_y(x_h,x_h)^{\frac12}, \quad y = a,p.
\]
We will
assume that the following strong consistency property holds. If $u$
is the solution of \eqref{forward} then
\begin{equation}\label{consist1}
a_h(u,\varphi) =(\mathcal{L }u,\varphi) = (f,\varphi) \mbox{ for all }\varphi \in W_h.
\end{equation}
Then $u$ solution of \eqref{forward} solves \eqref{standardFEM}, 
and $u$ solution of \eqref{forward} and $z \equiv 0$ solve the
system \eqref{FEM}.

We also assume that there are interpolation operators $\pi_V:V 
\rightarrow V_h $ and $\pi_W : W \rightarrow W_h$, satisfying
\eqref{approx}. We introduce the (semi-)norm $\|\cdot\|_+$ and assume
that the following approximation estimates are satisfied 
\begin{equation}\label{approx1}
\|v - \pi_V v\|_V + \|v - \pi_V v\|_+ + |v- \pi_V v|_{S_p}  \leq c_{a\gamma} h^r
|v|_{H^{k+1}(\Omega)}, \quad \forall v \in V \cap H^{k+1}(\Omega),
\end{equation} 
where $r>0$, depends {on the approximation properties of the finite
element space and the definition of the norms in the left hand
side}. From
the standard error estimates for stabilized methods we expect
$r=k+\frac12$ for smooth exact solutions. The
constant $c_{a\gamma}$ depends on the form $a(\cdot,\cdot)$ and 
stabilization parameter(s) of the method included in
$s_p(\cdot,\cdot)$ and $s_a(\cdot,\cdot)$, here denoted $\gamma$. 
\subsection{Abstract assumptions on the formulation
  \eqref{standardFEM}}
The assumptions made below consititutes sufficient conditions for the
method \eqref{standardFEM} to converge. Here we assume that
$\|\cdot\|_V \equiv \|\cdot \|_W$. As usual the conditions are
consistency, stability and continuity of the forms.
First consistency, Galerkin orthogonality for \eqref{standardFEM} is a consequence of the consistency \eqref{consist1} 
\begin{equation}\label{galortho1}
a_h(u - u_h,w_h) + s_p(u-u_h,w_h) = 0, \quad \forall w_h \in W_h.
\end{equation}
We assume that there exists $c_s, c_\eta \in \mathbb{R}^+$ such that
for all $h>0$ and $u_h \in V_h$ there exists $v_a \in W_h$ satisfying
\begin{equation}\label{strong_bound_va}
c_s (\|u_h\|^2_V + |u_h|^2_{S_p}) \leq a_h(u_h,v_a(u_h)) + s_p(u_h,v_a(u_h)) + \epsilon(h) (\|u_h\|_V^2 + |u_h|^2_{S_p}),
\end{equation}
where $\epsilon(h)$ is a continuous function such that $\epsilon(0)=0$,
and 
\begin{equation}\label{stab_v_bound3}
\|v_{a}(u_h)\|_V + |v_a(u_h)|_{S_p}
\leq c_\eta (\|u_h\|_V + |u_h|_{S_p}).
\end{equation}
These assumptions ensure that the stabilized formulation satisfies a
discrete inf-sup condition for $\epsilon(h)$ small enough. We also assume the following continuity.
\begin{equation}\label{cont2}
a_h(v - \pi_V v, x_h) \leq \|v - \pi_V v\|_+ c_{a}
(|x_h|_{S_p}+\|x_h\|_V),\, \forall v \in V, \, x_h \in W_h.
\end{equation}
\subsection{Abstract assumptions on the formulation \eqref{FEM}}
Observe that the following partial coercivity is obtained
by taking $v_h = u_h$ and $w_h=z_h$ in \eqref{FEM},
\begin{equation}\label{partial_coerc}
|z_h|_{S_a}^2 + |u_h|_{S_p}^2= (f, z_h) {+ s_p(u,u_h)}.
\end{equation} 
The following Galerkin orthogonality holds for \eqref{FEM} by \eqref{consist1},
\begin{equation}\label{galortho2}
\begin{array}{c}
a_h(u - u_h,w_h) = s_a(z_h,w_h)\quad\forall w_h \in W_h \\
{a_h(v_h, z_h) =
s_p(u_h - u,v_h)} \quad \forall v_h \in V_h.
\end{array}
\end{equation}
Let $\tilde \epsilon(h)$ and $\breve \epsilon(h)$ denote continuous,
monotonically increasing functions such that
$\tilde \epsilon(0)=0$ and $0 \leq \breve \epsilon(h)$.
We assume that the following discrete stability holds for all
$u_h\in V_h$, $z_h \in W_h$. For some
$\tilde c_s, \tilde c_\eta \in \mathbb{R}^+$, for all $u_h \in V_h$, there exists $v_{a}(u_h) \in W_h$ such that
\begin{equation}\label{forward_stability}
\tilde c_s \|u_h\|^2_V \leq a_h(u_h,v_{a}(u_h)) + \tilde
\epsilon(h)\|u_h\|^2_V + \tilde c_\eta|u_h|^2_{S_p}
\end{equation}
and similarly, for all $z_h \in W_h$ there exists $v_{a*}(z_h) \in V_h$ such that
\begin{equation}\label{adjoint_stability}
\tilde c_s \|z_h\|^2_W \leq a_h(v_{a*}(z_h),z_h) +  \tilde
\epsilon(h)\|z_h\|^2_W + \tilde c_\eta |z_h|^2_{S_a}.
\end{equation}
Moreover assume that the functions $v_a$ and $v_{a*}$ satisfy the bounds
 \begin{equation}\label{stab_v_bound2}
\|v_{a}(u_h)\|_W  \leq \tilde c_\eta \|u_h\|_V,  \quad |v_{a}(u_h)|_{S_a}
\leq \breve \epsilon(h) \|u_h\|_V +  \tilde c_\eta |u_h|_{S_p},
\end{equation}
\begin{equation}\label{stab_v_bound1}
\|v_{a*}(z_h)\|_V  \leq \tilde c_\eta \|z_h\|_W,  \quad |v_{a*}(z_h)|_{S_p}
\leq\breve \epsilon(h) \|z_h\|_W +  \tilde c_{\eta} |z_h|_{S_a}.
\end{equation}
Since we are interested in problems that are ill-conditioned, we here
assume $\tilde c_s<\tilde c_\eta$ without loss of generality.
We finally assume that the following continuity relation 
holds
\begin{equation}\label{cont1}
a_h(v - \pi_V v, x_h) \leq \|v - \pi_V v\|_+ c_{a}
(|x_h|_{S_a}+  \|x_h\|_W),\, \forall v \in V, \, x_h \in W_h.
\end{equation}
\subsection{Convergence analysis for the abstract methods}
We will first prove a convergence result for the standard stabilized
finite element method \eqref{standardFEM}. Then we will consider
\eqref{FEM}. 
\begin{proposition}\label{standard_conv}
Assume that the solution of \eqref{forward} is smooth and that the forms of
   \eqref{standardFEM} and the operators $\pi_V$, $\pi_W$ 
    are such that \eqref{galortho1}--\eqref{cont2} are satisfied.
Also assume that $\epsilon(h)$ satisfies the bound, 
\begin{equation}\label{h_cond_strong}
\epsilon(h) \leq \frac{c_s}{2}.
\end{equation}
Then \eqref{standardFEM} admits a unique solution $u_h$ for which there holds
\[
\|u - u_h\|_V + |u - u_h|_{S_p} \leq c_{as\gamma}
h^r
|u|_{H^{k+1}(\Omega)},
\]
where $c_{as\gamma}\sim (c_a+1) \frac{c_\eta}{c_s}$.
\end{proposition}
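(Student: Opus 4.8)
The plan is to run the standard Strang-type argument: use the perturbed stability \eqref{strong_bound_va}--\eqref{stab_v_bound3} together with the mesh condition \eqref{h_cond_strong} to produce a genuine discrete inf-sup estimate, deduce existence and uniqueness from it, and then combine Galerkin orthogonality \eqref{galortho1} with the continuity \eqref{cont2} and the approximation estimate \eqref{approx1} to obtain the error bound. Throughout I would abbreviate $B_h(v_h,w_h) := a_h(v_h,w_h) + s_p(v_h,w_h)$ and work with the combined quantity $N(v_h)^2 := \|v_h\|_V^2 + |v_h|_{S_p}^2$, noting that $N(v_h)$ and the sum $\|v_h\|_V + |v_h|_{S_p}$ are equivalent up to a factor $\sqrt 2$.

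First I would upgrade \eqref{strong_bound_va} to an inf-sup condition. Given $u_h \in V_h$, choose $v_a = v_a(u_h) \in W_h$; the bound \eqref{h_cond_strong}, $\epsilon(h) \leq c_s/2$, allows the term $\epsilon(h) N(u_h)^2$ on the right of \eqref{strong_bound_va} to be absorbed into the left-hand side, leaving
\[
\tfrac{c_s}{2}\, N(u_h)^2 \leq B_h(u_h, v_a).
\]
Combined with the boundedness \eqref{stab_v_bound3}, $\|v_a\|_V + |v_a|_{S_p} \leq c_\eta\, N(u_h)$ (up to $\sqrt 2$), this is a discrete inf-sup condition for $B_h$ with constant $\sim c_s/c_\eta$. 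Existence and uniqueness then follow at once: since $\dim V_h = \dim W_h$ it suffices to verify injectivity, and if $B_h(u_h, w_h) = 0$ for all $w_h \in W_h$, then testing with $w_h = v_a(u_h)$ forces $N(u_h) = 0$, hence $u_h = 0$; a square linear system with trivial kernel is invertible, so \eqref{standardFEM} has a unique solution.

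For the error I would split $u - u_h = (u - \pi_V u) - \xi_h$ with $\xi_h := u_h - \pi_V u \in V_h$, and estimate $\xi_h$ via the inf-sup with $v_a = v_a(\xi_h)$. Inserting Galerkin orthogonality \eqref{galortho1} rewrites $B_h(\xi_h, v_a)$ as $a_h(u - \pi_V u, v_a) + s_p(u - \pi_V u, v_a)$; the first term is controlled by \eqref{cont2} and the second by Cauchy--Schwarz for $s_p$, so that, writing $\eta := u - \pi_V u$,
\[
\tfrac{c_s}{2}\, N(\xi_h)^2 \leq \bigl(c_a \|\eta\|_+ + |\eta|_{S_p}\bigr)\bigl(\|v_a\|_V + |v_a|_{S_p}\bigr) \leq \bigl(c_a \|\eta\|_+ + |\eta|_{S_p}\bigr)\, c_\eta\, N(\xi_h).
\]
Dividing by $N(\xi_h)$, applying the triangle inequality $N(u - u_h) \leq N(\eta) + N(\xi_h)$, and invoking \eqref{approx1}, which bounds $\|\eta\|_V$, $\|\eta\|_+$ and $|\eta|_{S_p}$ each by $c_{a\gamma} h^r |u|_{H^{k+1}(\Omega)}$, yields the stated estimate with $c_{as\gamma} \sim (c_a+1)\tfrac{c_\eta}{c_s}$.

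I do not expect a real obstacle: the argument is standard once the abstract hypotheses are granted. The only point needing mild attention is the consistent bookkeeping of the two-component norm, since \eqref{strong_bound_va} is phrased with the Euclidean combination $N(\cdot)$ whereas \eqref{stab_v_bound3} and \eqref{cont2} use the sum $\|\cdot\|_V + |\cdot|_{S_p}$; reconciling these introduces only harmless $\sqrt 2$ factors and does not affect the advertised dependence of $c_{as\gamma}$ on $c_a$, $c_\eta$ and $c_s$. It is also worth stressing that the mesh condition \eqref{h_cond_strong} is exactly what turns the merely \emph{perturbed} coercivity of \eqref{strong_bound_va} into a uniform inf-sup, and is therefore the single hypothesis whose failure would break both existence and the error estimate.
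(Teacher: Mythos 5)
Your proposal is correct and follows essentially the same route as the paper: absorb the $\epsilon(h)$ perturbation using \eqref{h_cond_strong} to get a genuine discrete inf-sup bound, deduce uniqueness (hence existence, the system being square), and then combine Galerkin orthogonality, the continuity \eqref{cont2}, Cauchy--Schwarz on $s_p$, the stability \eqref{stab_v_bound3} and the approximation estimate \eqref{approx1}. The only differences are cosmetic (the abbreviations $B_h$ and $N(\cdot)$, and the sign convention for $\xi_h$), and the norm-bookkeeping point you raise is indeed harmless.
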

\begin{proof}
Since the spaces $W_h$ and $V_h$ have the same dimension, the matrix
is square and it is sufficient to prove uniqueness.
Assume $(f,v_h)+s_p(u,v_h) = 0$ for all $v_h \in W_h$. Under the
condition \eqref{h_cond_strong} there holds
\[
\frac12 c_s (\|u_h\|^2_V + |u_h|^2_{S_p}) \leq a_h(u_h,v_a(u_h)) +
s_p(u_h,v_a(u_h)) = 0
\]
hence $u_h = 0$ and existence and uniqueness follows.
Let $\xi_h := \pi_V u - u_h$. By the stability assumption 
\eqref{strong_bound_va} we have
\begin{equation*}
c_s( \|\xi_h\|_V^2 +  |\xi_h|^2_{S_p})\leq
a_h(\xi_h,v_a(\xi_h))+s_p(\xi_h,v_a(\xi_h))+ \epsilon(h) ( \|\xi_h\|_V^2 +  |\xi_h|^2_{S_p}).
\end{equation*}
It follows that under the condition \eqref{h_cond_strong} there holds
\[
\frac12 c_s( \|\xi_h\|_V^2 + |\xi_h|^2_{S_p}) \leq
a_h(\xi_h,v_a(\xi_h))+s_p(\xi_h,v_a(\xi_h))
\]
and by Galerkin orthogonality \eqref{galortho1}, the continuity \eqref{cont2} and the
stability \eqref{stab_v_bound3}
\begin{multline*}
\frac12 c_s( \|\xi_h\|_V^2 +  |\xi_h|^2_{S_p}) \leq
a_h(\pi_V u - u,v_a(\xi_h))+s_p(\pi_V u - u,v_a(\xi_h))\\
\leq  c_a \|\pi_V u - u\|_+(|v_a(\xi_h)|_{S_p} + \|v_a(\xi_h)\|_V) +
|\pi_V u - u|_{S_p} |v_a(\xi_h)|_{S_p}\\
\leq (c_a+1)( \|\pi_V u - u\|_+ + |\pi_V u - u|_{S_p})  c_\eta( \|\xi_h\|_V +  |\xi_h|_{S_p}).
\end{multline*}
We conclude by noting that $\|u - u_h\|_V \leq \|u-\pi_V u\|_V +
\|\xi_h\|_V$ and  applying the approximation \eqref{approx1}.
\end{proof}\\
We now turn to the analysis of \eqref{FEM}. In this case the analysis is based on a combination of coercivity of the
stabilization operators \eqref{partial_coerc} and an inf-sup argument
using \eqref{forward_stability} and \eqref{adjoint_stability}. This
allows us to exploit the strong stability property \eqref{partial_coerc} enjoyed by
the stabilization terms and thereby improve the robustness of the method.
\begin{theorem}\label{stab_conv}
Assume that
   the solution of \eqref{forward} is smooth, that the forms of
   \eqref{FEM} and the operators $\pi_V$, $\pi_W$ 
    are such that \eqref{approx1}, \eqref{galortho2}--\eqref{cont1} are satisfied and that
\begin{equation}\label{h_cond}
\tilde \epsilon(h) \leq \frac{\tilde c_s}{2}.
\end{equation}
Then \eqref{FEM} admits a unique solution $u_h,z_h$ for which there holds
\[
\|u - u_h\|_V + \|z_h\|_W + |u - u_h|_{S_p}  + |z_h|_{S_a}\leq \tilde c_{as\gamma}
h^r
|u|_{H^{k+1}(\Omega)}.
\]
The constant in the above estimate is given by
$$
\tilde c_{as\gamma} \sim (c_a + 1)\frac{\tilde c_{\eta}}{\tilde c_s} \left(1 +
  \frac{\breve \epsilon(h)^2}{\tilde c_{\eta} \tilde c_s} \right).
$$
Similarly, if $s_p(u,w_h)=0$, there holds
\[
 |u_h|_{S_p} + |z_h|_{S_a}  \leq \tilde c_{as\gamma}  h^r
|u|_{H^{k+1}(\Omega)}.
\]
\end{theorem}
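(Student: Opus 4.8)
The plan is to follow the template of Proposition~\ref{standard_conv}, but to split the control of the discrete solution into its stabilization seminorms, governed by the partial coercivity \eqref{partial_coerc}, and its $V$- and $W$-norms, governed by the inf--sup estimates \eqref{forward_stability}--\eqref{adjoint_stability}. Since $\dim V_h=\dim W_h$, existence follows from uniqueness, so first I would show that the only solution of \eqref{FEM} with vanishing right-hand side is trivial. Testing the two equations with $w_h=z_h$ and $v_h=u_h$ and subtracting gives the partial-coercivity relation \eqref{partial_coerc} with zero data, whence $|z_h|_{S_a}=|u_h|_{S_p}=0$. Inserting $v_a(u_h)$ from \eqref{forward_stability} into the first equation and using the Cauchy--Schwarz inequality for $s_a$ together with $|z_h|_{S_a}=0$ kills the term $a_h(u_h,v_a(u_h))$; with $|u_h|_{S_p}=0$ and $\tilde\epsilon(h)\le\tilde c_s/2$ from \eqref{h_cond} this forces $u_h=0$, and the mirror argument with $v_{a*}(z_h)$ in \eqref{adjoint_stability} and the second equation gives $z_h=0$.

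For the error bound I would set $\xi_h:=\pi_V u-u_h$ and target the four quantities $\|\xi_h\|_V$, $\|z_h\|_W$, $|\xi_h|_{S_p}$, $|z_h|_{S_a}$, recalling $z\equiv0$. The first key step is an identity for the seminorms. Splitting $\xi_h=(\pi_V u-u)+(u-u_h)$ inside $s_p(\xi_h,\xi_h)$, splitting $u-u_h=(u-\pi_V u)+\xi_h$ inside $s_a(z_h,z_h)=a_h(u-u_h,z_h)$ (first line of \eqref{galortho2}), and using the second line of \eqref{galortho2} to write $s_p(u-u_h,\xi_h)=-a_h(\xi_h,z_h)$, the two occurrences of $a_h(\xi_h,z_h)$ cancel and one is left with
\[
|\xi_h|_{S_p}^2+|z_h|_{S_a}^2=s_p(\pi_V u-u,\xi_h)+a_h(u-\pi_V u,z_h).
\]
Bounding the right-hand side by Cauchy--Schwarz and the continuity \eqref{cont1}, and invoking \eqref{approx1} (writing $\delta:=c_{a\gamma}h^r|u|_{H^{k+1}(\Omega)}$), yields $|\xi_h|_{S_p}^2+|z_h|_{S_a}^2\le\delta|\xi_h|_{S_p}+c_a\delta(|z_h|_{S_a}+\|z_h\|_W)$, which controls the seminorms up to the still-unknown $\|z_h\|_W$.

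The second step produces the two norm estimates. For $\|\xi_h\|_V$ I would use \eqref{forward_stability} with $v_a(\xi_h)$, absorb $\tilde\epsilon(h)\|\xi_h\|_V^2$ via \eqref{h_cond}, split $a_h(\xi_h,v_a(\xi_h))$ as above, treat $a_h(u-u_h,v_a(\xi_h))=s_a(z_h,v_a(\xi_h))$ by \eqref{galortho2} and Cauchy--Schwarz, bound $a_h(\pi_V u-u,v_a(\xi_h))$ by \eqref{cont1}, and finally insert the bounds \eqref{stab_v_bound2} on $\|v_a(\xi_h)\|_W$ and $|v_a(\xi_h)|_{S_a}$. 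This bounds $\tfrac12\tilde c_s\|\xi_h\|_V^2$ by products of $\delta$, $|\xi_h|_{S_p}$, $|z_h|_{S_a}$ and $\|\xi_h\|_V$ with coefficients built from $c_a$, $\tilde c_\eta$ and $\breve\epsilon(h)$. The symmetric computation with \eqref{adjoint_stability}, $v_{a*}(z_h)$, the second line of \eqref{galortho2} and \eqref{stab_v_bound1} bounds $\tfrac12\tilde c_s\|z_h\|_W^2$ by products of $(\delta+|\xi_h|_{S_p})$ with $(\breve\epsilon(h)\|z_h\|_W+\tilde c_\eta|z_h|_{S_a})$ plus $\tilde c_\eta|z_h|_{S_a}^2$.

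The final step, which I expect to be the main obstacle, is to close this coupled family of quadratic inequalities and extract the sharp constant. The plan is to apply Young's inequality repeatedly, with weights tuned to $\tilde c_s$ and $\tilde c_\eta$, absorbing each quadratic occurrence of $\|\xi_h\|_V$, $\|z_h\|_W$, $|\xi_h|_{S_p}$, $|z_h|_{S_a}$ on the right into its counterpart on the left: first use the adjoint estimate to bound $\|z_h\|_W$ by $\delta$, $|\xi_h|_{S_p}$ and $|z_h|_{S_a}$, feed this into the seminorm identity to obtain $|\xi_h|_{S_p}+|z_h|_{S_a}+\|z_h\|_W\lesssim(c_a+1)K\delta$ with $K:=\tfrac{\tilde c_\eta}{\tilde c_s}\bigl(1+\tfrac{\breve\epsilon(h)^2}{\tilde c_\eta\tilde c_s}\bigr)$ (note $K\ge1$ since $\tilde c_s<\tilde c_\eta$), and then substitute into the forward estimate to bound $\|\xi_h\|_V$ at the same order. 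Careful bookkeeping of these absorptions is exactly what reproduces the stated constant $\tilde c_{as\gamma}\sim(c_a+1)K$. The theorem then follows from the triangle inequality $\|u-u_h\|_V\le\|u-\pi_V u\|_V+\|\xi_h\|_V$ (and likewise for $|\cdot|_{S_p}$) together with \eqref{approx1}. For the last claim, if $s_p(u,\cdot)=0$ then $|u|_{S_p}=0$, so $|u_h|_{S_p}\le|u-u_h|_{S_p}$ is already bounded by the main estimate and $|z_h|_{S_a}$ is bounded directly; the sharper seminorm estimate is thus an immediate corollary.
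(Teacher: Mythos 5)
Your argument is correct, and it rests on exactly the same ingredients as the paper's proof --- the test functions $v_a(\xi_h)$, $v_{a*}(z_h)$ from \eqref{forward_stability}--\eqref{adjoint_stability}, the orthogonality \eqref{galortho2}, the continuity \eqref{cont1} and the bounds \eqref{stab_v_bound2}--\eqref{stab_v_bound1} --- but it organizes them differently. The paper never isolates your identity $|\xi_h|_{S_p}^2+|z_h|_{S_a}^2=s_p(\pi_V u-u,\xi_h)+a_h(u-\pi_V u,z_h)$; instead it adds $\mu_S$ times the rewriting $s_p(\xi_h,\xi_h)+s_a(z_h,z_h)=a_h(\xi_h,z_h)+s_a(z_h,z_h)-a_h(\xi_h,z_h)+s_p(\xi_h,\xi_h)$ to $\mu_V$ times the two inf--sup inequalities, so that the whole right-hand side collapses to $a_h(\xi_h,\mu_S z_h+\mu_V v_a(\xi_h))+s_a(\cdot,z_h)-a_h(\cdot,z_h)+s_p(\xi_h,\cdot)$, applies Galerkin orthogonality once to this combined expression, and then absorbs every remainder in a single pass by the explicit choice $\mu_V=4/\tilde c_s$, $\mu_S=9\tilde c_\eta/\tilde c_s+4\breve\epsilon(h)^2/\tilde c_s^2$. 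Your sequential closure --- adjoint norm in terms of the seminorms, seminorms in terms of $\delta$, then the forward norm --- replaces that choice of weights by repeated Young inequalities and substitution; it is a little longer to write out in full but more transparent, and it reproduces the same constant because the dominant contributions ($\tilde c_\eta/\tilde c_s$ from the seminorm coupling and $\breve\epsilon(h)^2/\tilde c_s^2$ from absorbing the $\breve\epsilon(h)\|\cdot\|$ terms, combined using $\tilde c_s<\tilde c_\eta$) are identical. Your uniqueness argument is also more self-contained: you work directly on the homogeneous discrete system via the partial coercivity \eqref{partial_coerc} and then the inf--sup conditions, whereas the paper simply reapplies the a priori estimate with $f=0$ (hence $u=\pi_V u=0$). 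Both routes are valid and yield the theorem as stated.
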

\begin{proof}
For the first inequality, let $\xi_h = \pi_V u - u_h$.  As in the
previous case it is enough to prove the claim for $\xi_h$. By the
definition \eqref{FEM} there holds
\[
|\xi_h|_{S_p}^2+|z_h|_{S_a}^2   = s_p(\xi_h,\xi_h)+s_a(z_h,z_h) = a_h(\xi_h,z_h) +
s_a(z_h,z_h) -  a_h(\xi_h,z_h) + s_p(\xi_h,\xi_h).
\]
By the stabilities \eqref{forward_stability}--\eqref{stab_v_bound2} there exists $v_a(\xi_h)$ and
$v_{a*}(z_h)$ such that
\begin{multline*}
\tilde c_s (\|\xi_h\|^2_V +\|z_h\|_W^2) \leq a_h(\xi_h,v_a(\xi_h)) +
s_a(v_a(\xi_h),z_h) \\+ a_h(v_{a*}(z_h),z_h) -
s_p(\xi_h, v_{a*}(z_h))
+ 
\tilde \epsilon(h)\|\xi_h\|^2_V + \tilde c_\eta
|\xi_h|^2_{S_p}\\
+|z_h|_{S_a} (\breve \epsilon(h) \|\xi_h\|_V + \tilde c_{\eta} |\xi_h|_{S_p}) 
+  
\tilde \epsilon(h)\|z_h\|^2_W +  \tilde c_\eta
|z_h|_{S_a}^2\\
+|\xi_h|_{S_p}(\breve \epsilon(h)\|z_h\|_W +   \tilde c_{\eta} |z_h|_{S_a}) 
.
\end{multline*}
It follows that for all $\mu_V,\mu_S>0$ we may write
\begin{multline*}
\tilde c_s \mu_V (\|\xi_h\|^2_V +\|z_h\|_W^2)
+\mu_S(|\xi_h|_{S_p}^2+|z_h|_{S_a}^2) \leq a_h(\xi_h, \mu_S z_h + \mu_V
v_a(\xi_h)) \\ +
s_a(\mu_S z_h + \mu_V v_a(\xi_h),z_h) - a_h(\mu_S \xi_h - \mu_V
v_{a*}(z_h),z_h) + s_p(\xi_h ,\mu_S \xi_h - \mu_V v_{a*}(z_h))
\\
+ \mu_V \tilde \epsilon(h)(\|\xi_h\|^2_V + \|z_h\|^2_W)+  \mu_V \tilde c_\eta ( |\xi_h|^2_{S_p}+|z_h|^2_{S_a})\\
+\mu_V |z_h|_{S_a} (\breve \epsilon(h) \|\xi_h\|_V + \tilde c_{\eta} |\xi_h|_{S_p}) 
+\mu_V |\xi_h|_{S_p}(\breve \epsilon(h)\|z_h\|_W +   \tilde c_{\eta} |z_h|_{S_a})  .
\end{multline*}
By arithmetic-geometric inequalities in the right hand side 
\begin{multline*}
\mu_V \tilde \epsilon(h)(\|\xi_h\|^2_V + \|z_h\|^2_W)+  \mu_V \tilde c_\eta ( |\xi_h|^2_{S_p}+|z_h|^2_{S_a})\\
+\mu_V |z_h|_{S_a} (\breve \epsilon(h) \|\xi_h\|_V + \tilde c_{\eta} |\xi_h|_{S_p}) 
+\mu_V |\xi_h|_{S_p}(\breve \epsilon(h)\|z_h\|_W +   \tilde c_{\eta} |z_h|_{S_a}) \\
\leq \mu_V \left(\tilde \epsilon(h) + \frac14 \tilde c_s\right) (\|\xi_h\|_V^2+\|z_h\|_W^2)
+ \mu_V \left(2 \tilde c_\eta  + \frac{\breve \epsilon(h)^2}{ \tilde c_s}\right) \  (|\xi_h|_{S_p}^2 +  |z_h|^2_{S_a}).
\end{multline*}
Therefore under
the condition \eqref{h_cond} there holds
\begin{multline*}
\frac14 \tilde c_s\mu_V (\|\xi_h\|^2_V +\|z_h\|_W^2)
+ \left(\mu_S -   \mu_V \left(2 \tilde c_\eta  + \frac{\breve \epsilon(h)^2}{ \tilde c_s}\right) \right)(
|\xi_h|_{S_p}^2+|z_h|_{S_a}^2) \\
\leq a_h(\xi_h,\mu_S z_h + \mu_V
v_a(\xi_h)) +
s_a(\mu_S  z_h + \mu_V v_a(\xi_h),z_h) \\ -  a_h(\mu_S\xi_h - \mu_V
v_{a*}(z_h),z_h) + s_p(\xi_h,\mu_S\xi_h - \mu v_{a*}(z_h)).
\end{multline*}
Then, by choosing $\mu_V = \tfrac{4}{\tilde c_s}$, $\mu_S = 
\tfrac{9 \tilde c_\eta}{\tilde c_s}  + \tfrac{4 \breve \epsilon(h)^2}{ \tilde c_s^2}$
and applying the Galerkin orthogonality of equation 
\eqref{galortho2}, we have, since by assumption $\tilde c_s < \tilde c_\eta$
\begin{multline*}
\|\xi_h\|^2_V +\|z_h\|_W^2 +
|\xi_h|_{S_p}^2+|z_h|_{S_a}^2\\ \leq a_h(\pi_V u -
u,\mu_S z_h + \mu_V
v_a(\xi_h)) + s_p(\pi_V u-u, \mu_S \xi_h - \mu_V v_{a*}(z_h)).
\end{multline*}
We proceed by applying the continuity \eqref{cont1} in the first term
of the right hand side and the Cauchy-Schwarz inequality in the
stabilization term,
\begin{multline*}
\|\xi_h\|^2_V
+\|z_h\|_W^2+|\xi_h|_{S_p}^2+|z_h|_{S_a}^2\\ \leq \|u - \pi_V u\|_+
c_a(|\mu_S z_h + \mu_V
v_a(\xi_h)|_{S_a} + \|\mu_S z_h + \mu_V
v_a(\xi_h)\|_W)\\+ |u-\pi_V u|_{S_p}|\mu_S \xi_h - \mu_V v_{a*}(z_h)|_{S_p}.
\end{multline*}
Using a triangle inequality followed by the  the stability of $v_a$
\eqref{stab_v_bound2} and $v_{a*}$ \eqref{stab_v_bound1} and the bound
$\mu_V (\tilde c_\eta + \breve \epsilon(h)) < \mu_S$, that holds under
the assumption $\tilde c_s < \tilde c_{\eta}$,
we may conclude that
\begin{multline*}
\|\xi_h\|^2_V
+\|z_h\|_W^2+|\xi_h|_{S_p}^2+|z_h|_{S_a}^2 \leq
 (\|u - \pi_V u\|_+ +  |u-\pi_V u|_{S_p})\\ \times
(c_a + 1) \mu_S (   \|\xi_h\|_V
+\|z_h\|_W+|\xi_h|_{S_p} + |z_h|_{S_a} ).
\end{multline*}
We conclude from this expression and \eqref{approx1} that the first
claim holds.
The second result is an immediate consequence of $s_p(u,w_h)=0$ and
the symmetry of $s_p(\cdot,\cdot)$. Uniqueness of the discrete
solution follows by taking $f=0$ in \eqref{forward} and observing that
since then $u=\pi_V u=0$ we have $u_h=z_h=0$ by which uniqueness
follows using the same a priori estimates.
\end{proof}
\section{Stabilization methods}\label{sec:stabilization}
We let $\mathcal{L}$ denote the first order hyperbolic operator on non-conservation form,
\begin{equation}\label{hyperbolic_oper}
\mathcal{L} u := \beta \cdot \nabla u + \sigma u.
\end{equation}
Here $\beta \in [W^{2,\infty}(\Omega)]^d$ is a non-solenoidal velocity
vectorfield and $\sigma \in W^{1,\infty}(\Omega)$. 
We also assume that boundary conditions are set on the inflow boundary
$\partial \Omega^-$,
$$
u\vert_{\partial \Omega^-} = g_{in}, \quad \partial \Omega^\pm := \{x \in \partial \Omega : \pm \beta(x) \cdot n > 0 \}.
$$ 
The adjoint operator
takes the form
\begin{equation}\label{hyperbolic_adjoint}
\mathcal{L}^* u := -\nabla \cdot(\beta u) + \sigma u.
\end{equation}
We have assumed below that the reaction is moderately stiff
so that the relevant time scale of the flow is given by $h
|\beta|^{-1}$. In particular we will not track the influence of the size
of $\sigma$ in the error bounds below, assuming
$h^{\frac12}(\|\sigma\|_{L^\infty(\Omega)} + \|\nabla \cdot \beta\|_{L^\infty(\Omega)})$ moderate.
We will consider three
different stabilized finite element methods below and show that they
all satisfy the assumptions of the abstract theory. The bilinear form
$a_h(\cdot,\cdot)$ of \eqref{standardFEM} and \eqref{FEM}
is defined as
\begin{equation}\label{discrete_bilin}
a_h(u_h,v_h) := (\mathcal{L} u_h,v_h)_h - \frac12 \sum_{K \in \mathcal{T}_h}
\int_{\partial K \setminus \partial \Omega} \beta \cdot n_{\partial K}[u_h] \{v_h\} ~\mbox{d}s
\end{equation}
where $\{v_h\}$ denotes the average of $v_h$ from the two element
faces, 
\[
\{u_h\}(x)\vert_{\partial K} := \frac12 \lim_{\varepsilon \rightarrow 0^+} (u_h(x-\varepsilon
n_{\partial K}) + u_h(x+\varepsilon
n_{\partial K})),
\]
the jump of $u_h$ is defined as
\[
[u_h](x)\vert_{\partial K} := \lim_{\varepsilon \rightarrow 0^+} (u_h(x-\varepsilon
n_{\partial K}) - u_h(x+\varepsilon
n_{\partial K})).
\]
As usual the jump terms on $u_h$ may be omitted when a
    continuous function is considered in the formulation.
First we will prove a general stability result on $a_h(u_h,v_h)$. 
\begin{lemma}\label{basic_stability}
For the bilinear form \eqref{discrete_bilin} there holds $\forall \eta
\in W^{1,\infty}(\Omega)$, $\forall u_h,z_h \in X_h^k$,
\[
a_h(u_h,e^{\pm \eta} u_h) = \frac12 \int_{\partial \Omega} (\beta \cdot
n) u_h^2 e^{\pm\eta} ~\mbox{d}s + \int_{\Omega} u_h^2 \left(\mp \frac12 \beta
\cdot \nabla \eta - \frac12 \nabla \cdot \beta + \sigma\right) e^{\pm\eta}~\mbox{d}x,
\]
\[
a_h( e^{\pm \eta}z_h, z_h) = \frac12 \int_{\partial \Omega} (\beta \cdot
n) z_h^2 e^{\pm \eta} ~\mbox{d}s + \int_{\Omega} z_h^2 \left(\pm \frac12 \beta
\cdot \nabla \eta - \frac12 \nabla \cdot \beta + \sigma\right) e^{\pm \eta}~\mbox{d}x.
\]
\end{lemma}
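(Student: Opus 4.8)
The plan is to evaluate $a_h$ on the modified test function directly, noting first that since $\eta \in W^{1,\infty}(\Omega) \subset C^0(\bar\Omega)$ the weight $e^{\pm\eta}$ is continuous, so the products $e^{\pm\eta}u_h$ and $e^{\pm\eta}z_h$ are legitimate (piecewise smooth, single-valued on faces) arguments for the elementwise form \eqref{discrete_bilin}. Both identities are established by the same computation, the only difference being whether the weight sits in the second argument (first identity) or the first argument (second identity); I will carry out the first and indicate where the second diverges. The two algebraic facts driving everything are the chain rule $\beta \cdot \nabla e^{\pm\eta} = \pm e^{\pm\eta}\,\beta\cdot\nabla\eta$ and the elementary identity $(\beta\cdot\nabla u_h)\,u_h = \tfrac12 \beta\cdot\nabla(u_h^2)$.

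For the first identity I would split $a_h(u_h,e^{\pm\eta}u_h)$ into the volume part $(\mathcal{L}u_h, e^{\pm\eta}u_h)_h$ and the interior-face part. In the volume part the reaction term is already in final form, $\int_\Omega \sigma u_h^2 e^{\pm\eta}$, while the advection term is rewritten with the above identity as $\tfrac12\sum_K\int_K e^{\pm\eta}\,\beta\cdot\nabla(u_h^2)$ and integrated by parts elementwise. Using $\nabla\cdot(e^{\pm\eta}\beta) = e^{\pm\eta}(\nabla\cdot\beta \pm \beta\cdot\nabla\eta)$, the resulting volume contribution is exactly $\int_\Omega u_h^2(\mp\tfrac12\beta\cdot\nabla\eta - \tfrac12\nabla\cdot\beta + \sigma)e^{\pm\eta}$, matching the claim. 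For the second identity the only change is that expanding $\mathcal{L}(e^{\pm\eta}z_h)$ produces an extra term $\pm e^{\pm\eta}z_h^2\,\beta\cdot\nabla\eta$ with full coefficient; combined with the $\mp\tfrac12$ coming from the integration by parts this yields the coefficient $\pm\tfrac12$ in front of $\beta\cdot\nabla\eta$, which is precisely the sign flip distinguishing the two stated identities.

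The face terms are where the argument has to be done carefully, and this is the step I expect to be the main obstacle. The elementwise integration by parts leaves $\tfrac12\sum_K\int_{\partial K} e^{\pm\eta}(\beta\cdot n_{\partial K})u_h^2$. On boundary faces this collapses to $\tfrac12\int_{\partial\Omega}(\beta\cdot n)u_h^2 e^{\pm\eta}$, the boundary term in the statement. On an interior face $F$ shared by $K^+,K^-$ with $n_{\partial K^-} = -n_{\partial K^+}$, summing the two contributions gives $\tfrac12\int_F e^{\pm\eta}(\beta\cdot n)\big((u_h^+)^2-(u_h^-)^2\big)$; with the jump/average conventions of the paper and continuity of $e^{\pm\eta}$ (so that $\{e^{\pm\eta}u_h\} = e^{\pm\eta}\{u_h\}$) this equals $\int_F e^{\pm\eta}(\beta\cdot n)[u_h]\{u_h\}$, via $(u_h^+)^2-(u_h^-)^2 = 2[u_h]\{u_h\}$. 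The key remaining point is that the penalty-like term $-\tfrac12\sum_K\int_{\partial K\setminus\partial\Omega}\beta\cdot n_{\partial K}[u_h]\{e^{\pm\eta}u_h\}$ in \eqref{discrete_bilin}, after summing over the two sides of each interior face (the double sign change from $n_{\partial K}$ and from $[u_h]$ makes the two contributions equal), equals $-\sum_{F\in\mathcal{F}_{int}}\int_F e^{\pm\eta}(\beta\cdot n)[u_h]\{u_h\}$, exactly cancelling the interior-face integrals from the integration by parts. The same cancellation goes through verbatim in the second identity with $[e^{\pm\eta}z_h] = e^{\pm\eta}[z_h]$. Once these interior contributions vanish, only the boundary and volume terms survive, yielding the two identities.
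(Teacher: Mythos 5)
Your proof is correct and follows essentially the same route as the paper: an elementwise integration by parts of the advective term (your identity $u_h\,\beta\cdot\nabla u_h=\tfrac12\beta\cdot\nabla(u_h^2)$ is just a repackaging of the paper's ``integrate by parts and move the repeated term to the left-hand side'' step), with the interior-face contributions cancelling against the $[u_h]\{e^{\pm\eta}u_h\}$ penalty term thanks to the continuity of $e^{\pm\eta}$, leaving only the boundary and weighted volume terms. Your treatment of the face terms and of the sign bookkeeping in the adjoint identity is accurate, and in fact more explicit than the paper's.
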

\begin{proof}
Consider the first inequality with the negative sign in the exponent.
By definition we have
\begin{multline}\label{a_defstart}
a_h(u_h,e^{-\eta} u_h)  = (\beta \cdot \nabla  u_h+\sigma u_h,
e^{-\eta} u_h)_h - \frac12  \sum_{K \in \mathcal{T}_h}
\int_{\partial K \setminus \partial \Omega} \beta \cdot n_{\partial K}
[u_h] \{e^{-\eta} u_h\} ~\mbox{d}s 
\end{multline} 
and note that an integration by parts in the advective term yields
\begin{multline*}
(\beta \cdot \nabla  u_h, e^{-\eta} u_h)_h - \frac12 \sum_{K \in \mathcal{T}_h}
\int_{\partial K \setminus \partial \Omega} \beta \cdot n_{\partial K}
[u_h] \{e^{-\eta} u_h\} ~\mbox{d}s = (u_h,e^{-\eta} (\beta \cdot
\nabla \eta - \nabla \cdot \beta) u_h)\\
 - (u_h,
e^{-\eta} \beta \cdot \nabla u_h)_h + \frac12 \sum_{K \in \mathcal{T}_h}
\int_{\partial K \setminus \partial \Omega} \beta \cdot n_{\partial K}
[u_h] \{e^{-\eta} u_h\} ~\mbox{d}s \\
+ \int_{\partial \Omega} (\beta \cdot
 n) u_h^2 e^{-\eta} ~\mbox{d}s.
\end{multline*}
This equality implies the following well-known relation,
\begin{multline}\label{eq:dg_cons}
(\beta \cdot \nabla  u_h, e^{-\eta} u_h)_h - \frac12 \sum_{K \in \mathcal{T}_h}
\int_{\partial K \setminus \partial \Omega} \beta \cdot n_{\partial K}
[u_h] \{e^{-\eta} u_h\} ~\mbox{d}s \\
 = \frac12 \left( (u_h,e^{-\eta} (\beta \cdot
\nabla \eta - \nabla \cdot \beta) u_h)  + \int_{\partial \Omega} (\beta \cdot
 n) u_h^2 e^{-\eta} ~\mbox{d}s\right).
\end{multline}
The first stability result is obtained by applying this equality in
\eqref{a_defstart}.
The inequality for the adjoint case is proven similarly by observing that after
an integration by parts in the bilinear form
\begin{multline}\label{z_defstart}
a_h(e^{-\eta} z_h, z_h)  = -(e^{-\eta} z_h, \beta \cdot \nabla  z_h +
(\nabla \cdot \beta - \sigma) z_h) \\
+ \frac12\sum_{K \in \mathcal{T}_h}
\int_{\partial K \setminus \partial \Omega} \beta \cdot n_{\partial K}
[z_h] \{e^{-\eta} z_h\} ~\mbox{d}s 
+ \int_{\partial \Omega} (\beta \cdot
 n) z_h^2 e^{-\eta} ~\mbox{d}s
\end{multline} 
and then applying \eqref{eq:dg_cons}. The case where the power is
positive follows similarly, observing that the change of sign only has
an effect in the inner derivative $\beta \cdot \nabla \eta$.
\end{proof}\\
The importance of this Lemma is a consequence of the existence of a
particular function $\eta$ that is given in the following result. 
\begin{lemma}
Under the assumptions on $\beta$ there exists $\eta_0
\in W^{2,\infty}(\Omega)$ such that $\beta \cdot \nabla \eta_0 \ge 1$ in
$\Omega$.
\end{lemma}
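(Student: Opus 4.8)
The plan is to build $\eta_0$ from the travel time along the characteristics of $\beta$. Denote by $X(t;x)$ the flow of $\beta$, i.e. the solution of $\dot X = \beta(X)$, $X(0;x)=x$. Since $\beta \in [W^{2,\infty}(\Omega)]^d$ is Lipschitz the flow is well defined and, for $t$ in any bounded interval, depends in a $C^{1,1}$ fashion on $x$. By the standing assumption every backward characteristic reaches the inflow boundary in finite time, so the ``time since entry'' $\tau(x) := \inf\{t>0 : X(-t;x) \notin \Omega\}$ is finite at every $x$. By the method of characteristics it satisfies $\beta \cdot \nabla \tau = 1$ pointwise along each streamline, since $\frac{d}{ds}\tau(X(s;x)) = 1$, which makes $\tau$ the natural candidate for $\eta_0$ up to smoothing.

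The difficulty is regularity: $\tau$ is in general only Lipschitz, and may even jump, because characteristics that graze $\partial\Omega$ tangentially or emanate from corners of the inflow boundary produce discontinuities in its derivatives, so that $\tau \notin W^{2,\infty}(\Omega)$ whereas the statement requires two bounded derivatives. To repair this I would first extend $\beta$ to a $C^{1,1}$, nowhere-vanishing field $\tilde\beta$ on a smooth domain $\Omega' \supset\supset \Omega$ (a ball, say), chosen so that the extended trajectories leave $\Omega'$ transversally through $\partial\Omega'$ in uniformly bounded time and without recurrence. This is possible because $|\beta| \ge \beta_0 > 0$ forbids stagnation points and the finite-exit assumption forbids closed or recurrent orbits inside $\Omega$, while in $\Omega' \setminus \Omega$ the extension can be designed to sweep trajectories outward; transversal exit through the smooth $\partial\Omega'$ then makes the entry-time $\tau'$ on $\Omega'$ bounded and Lipschitz, with $\tilde\beta \cdot \nabla \tau' = 1$ almost everywhere.

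It then suffices to mollify. Set $\eta_0 := c\,(\rho_\delta * \tau')$ with $\rho_\delta$ a standard mollifier and $c$ a constant to be fixed; this is smooth, hence in $W^{2,\infty}(\Omega)$. Writing $\beta \cdot \nabla (\rho_\delta * \tau')(x) = \rho_\delta * (\beta \cdot \nabla \tau')(x) + \int \rho_\delta(x-y)\,(\beta(x)-\beta(y))\cdot \nabla\tau'(y)\,dy$, the first term equals $1$ on the set where $\beta\cdot\nabla\tau' = 1$, while the commutator term is bounded by $\|\nabla\beta\|_{L^\infty}\,\|\nabla\tau'\|_{L^\infty}\,\delta$, since $|\beta(x)-\beta(y)| \le \|\nabla\beta\|_{L^\infty}\,\delta$ on the support of $\rho_\delta(x-\cdot)$. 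Thus $\beta\cdot\nabla(\rho_\delta*\tau') \ge 1 - C\delta$ uniformly on $\bar\Omega$ for small $\delta$ (the extension to $\Omega'$ guaranteeing validity up to $\partial\Omega$), and choosing $\delta$ with $C\delta \le \tfrac12$ together with $c=2$ yields $\beta\cdot\nabla\eta_0 \ge 1$, as required.

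The main obstacle is precisely this regularity step: the transport equation $\beta\cdot\nabla\tau = 1$ has a natural characteristic solution, but that solution is generically non-smooth, and the argument hinges on the fact that only the one-sided bound $\beta\cdot\nabla\eta_0 \ge 1$, not an exact identity, is demanded. This is what makes the smoothing admissible, since it perturbs the identity by an amount that is both controllable, of order $\delta$ using merely the Lipschitz bound on $\tau'$ and the modulus of continuity of $\beta$, and harmless after rescaling; the auxiliary extension to $\Omega'$ is what lets the commutator estimate and the smoothness hold uniformly up to $\partial\Omega$.
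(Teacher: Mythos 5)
The paper does not actually prove this lemma: it refers to Appendix A of \cite{AM09}, so there is no in-house argument to compare against line by line. Your strategy --- a time-along-characteristics function, extended past $\partial\Omega$ and then mollified --- is exactly the classical route taken in that literature, and your final mollification/commutator step is sound: if $\tilde\beta\cdot\nabla\tau'=1$ a.e.\ on a neighbourhood of $\bar\Omega$ with $\tau'$ Lipschitz, then $\rho_\delta*\tau'$ is smooth and satisfies $\beta\cdot\nabla(\rho_\delta*\tau')\ge 1-C\delta$ there, and rescaling finishes the proof.

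The genuine gap is your second paragraph, which is where essentially all of the difficulty of the lemma lives and which you dispatch by assertion. Two concrete problems. First, the existence of a nowhere-vanishing $C^{1,1}$ extension $\tilde\beta$ of $\beta$ to a ball $\Omega'$ such that every trajectory leaves $\Omega'$ in uniformly bounded time without recurrence is not obvious: you cannot simply make $\tilde\beta$ point outward on all of $\partial\Omega'$ (Poincar\'e--Hopf would then force a zero inside the ball), so $\partial\Omega'$ necessarily retains an inflow part, and you must still rule out closed or recurrent orbits of the \emph{extended} field that re-enter $\Omega$, and prove the uniform bound on residence time; this needs an actual compactness argument built on the hypotheses on $\beta$ (no stagnation points, finite travel time to the boundary), not just the remark that those hypotheses ``forbid'' bad behaviour inside $\Omega$. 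Second, even granted such an extension, the entry-time function $\tau'$ is not automatically Lipschitz up to $\bar\Omega$: the tangency problem you correctly identified on $\partial\Omega$ simply reappears on $\partial\Omega'$, for backward trajectories from $\bar\Omega$ that graze the inflow part of $\partial\Omega'$, so you would additionally need uniform transversality of entry for all such trajectories. A cleaner device that sidesteps the entry-time regularity entirely --- and is closer to the argument in the cited reference --- is to fix a smooth cutoff $g$ with $g\equiv 1$ on $\bar\Omega$ and support in $\Omega'$ and set $\eta_0(x):=\int_0^\infty g(X(-s;x))\,\mbox{d}s$; then $\beta\cdot\nabla\eta_0=g\ge 1$ on $\bar\Omega$ exactly, and $\eta_0$ inherits $C^{1,1}$ regularity from the flow once the residence time in the support of $g$ is shown to be uniformly bounded, which is again the one estimate that genuinely has to be proved.
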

For the proof of this result see \cite[Appendix A]{AM09}.

It follows that the second term of the right hand sides in the equations of Lemma
\ref{basic_stability} are {non-negative} for  
\begin{equation}\label{eq:stab_func}
\eta := (1+ \|2 \sigma-\nabla \cdot  \beta
\|_{L^\infty(\Omega)}) \, \eta_0.
\end{equation}
Below we
always assume that $\eta$ is of this form.
In general $e^{-\eta} u_h \not \in V_h$ and hence Lemma
\ref{basic_stability} is insufficient to prove
\eqref{forward_stability} and \eqref{adjoint_stability}. The trick is
to chose $v_a$ to be some suitable approximation of $e^{-\eta} u_h$ in
$V_h$, $\pi e^{-\eta} u_h$, and control the approximation error using the
stabilization. Since we are often required to estimate this error we
introduce the notation $\delta(e^{-\eta} u_h) := e^{-\eta} u_h - \pi e^{-\eta} u_h$. Similarly $v_{a*}$ is chosen as an approximation of $-
e^{-\eta} z_h$. 

The stabilization terms may now be chosen as
one of the following, where the first two assumes $H^1$-conforming
approximation and the last discontinuous approximation. In all three
cases we have $W_h \equiv V_h$. Below $\gamma_X \in \mathbb{R}^+$, $X
= GLS, \, CIP,\, DG$,
denotes a stabilization parameter associated to the method $X$ and
$\gamma_{bc} \in \mathbb{R}^+$ a stabilization parameter associated to
the weakly imposed boundary condition.
\begin{itemize}
\item The Galerkin least squares method.\\
In this case continuous finite element spaces are used, $V_h = W_h := X_h^k \cap
H^1(\Omega)$ and the stabilization operators take the form
\begin{equation}\label{GLS_prim}
s_{p,GLS}(u_h,w_h) := (\gamma_{GLS} |\beta|^{-1} h \mathcal{L} u_h, \mathcal{L} w_h) ,
\end{equation}
\begin{equation}\label{GLS_adjoint}
s_{a,GLS}(z_h,v_h) := (\gamma_{GLS} |\beta|^{-1} h \mathcal{L^*} z_h, \mathcal{L^*} v_h).
\end{equation}
Note that $s_{p,GLS}(u,w_h) =
(f,\gamma_{GLS} |\beta|^{-1} h \mathcal{L} w_h)$ showing that
$s_p(u,\cdot)$ can indeed be expressed using data. 
\item Continuous interior penalty stabilization.\\
Here as well continuous finite element spaces are used, $V_h = W_h  := X_h^k \cap
H^1(\Omega)$ and the stabilization is given by
\begin{equation}\label{jumpstab1}
s_{CIP}(u_h,w_h) := \sum_{F \in\mathcal{F}_{int}} \int_F h^2_F
\gamma_{CIP} \|\beta_h \cdot n_F\|_{L^\infty(F)}
\jump{\nabla u_h} \cdot \jump{\nabla w_h} ~\mbox{d}x
\end{equation}
 for both the primal and the adjoint equations, where 
$\jump{\nabla u_h}\vert_F$ denotes the
jump of the gradient over the
face $F$. 
\item The discontinuous Galerkin method.\\
In this case we do not impose any continuity constraints in the
finite element space
$V_h := X_h^k $. The method is stabilized by penalising the jump of the solution
over element faces for both the primal and the adjoint equations.
\begin{equation}\label{DGstab}
s_{DG}(u_h,w_h) := \sum_{F \in\mathcal{F}_{int}} \int_F 
\gamma_{DG} |\beta \cdot n_F|
[u_h] [w_h] ~\mbox{d}x.
\end{equation}
where 
$[u_h]\vert_F$ denotes the
jump of the solution over the
face $F$. The choice $\gamma_{DG} = \tfrac12$ is known to lead to the
classical upwind formulation for the method \eqref{standardFEM}.
\end{itemize}
To
account for boundary conditions the above stabilizations are modified as
follows
\begin{equation}\label{stab_ops_bound}
\begin{aligned}
s_p(u_h,w_h)& := s_{p,X}(u_h,w_h) + s_{bc,-}(u_h,w_h),\\
s_a(z_h,v_h) &:= s_{a,X}(z_h,v_h) + s_{bc,+}(z_h,v_h)+s_{bc,-}(z_h,v_h),
\end{aligned}
\end{equation}
with $X =
GLS,\,CIP,\, DG$ and
$
s_{bc,\pm} := \int_{\partial \Omega}\gamma_{bc}  |(\beta \cdot n)_{\pm}|
u_h v_h ~\mbox{d}s.
$
Note that the value of $z_h$ is penalized on the whole boundary. This
is necessary to obtain robustness if no boundary conditions are set
in $a_h(\cdot,\cdot)$ and allows for the simple choice of test functions used in the
analysis below. 
It should be noted that for problems where the adjoint solution satisfies $z=0$ the
stabilization in the bulk or on the boundary can be changed to any form satisfying the
assumptions \eqref{adjoint_stability}-\eqref{cont1}. The
consistency requirements are much weaker, since the exact solution is
trivial. The variant where $z_h$ is penalized only on the outflow
boundary can also be shown to be stable using the arguments below
provided that weak boundary
conditions are included also in $a_h(\cdot,\cdot)$. In this case
different weight functions must be used for $u_h$ and $z_h$. The present choice
was motivated mainly by the use of a single exponential weight in all
estimates and that it makes integration
of data assimilation problems straightforward, by changing the
boundary contribution in $s_p(\cdot,\cdot)$.

Below we will consider the methods \eqref{standardFEM} and \eqref{FEM}
one by one, in each case showing that the
assumptions \eqref{galortho1}-\eqref{cont2} are satisfied
for method \eqref{standardFEM} as well as
\eqref{galortho2}--\eqref{cont1} for method \eqref{FEM}. Clearly some arguments are
very similar between the different methods and full details are given
only for the GLS-method. The conclusion is that all three schemes
satisfy the assumptions necessary for the abstract analysis to
hold. The dependence of the $\epsilon(h)$, $\tilde \epsilon(h)$,
$\breve \epsilon(h)$ and
$c_\eta$ and $\tilde c_\eta$ on the physical parameters
and on $h$ is specified in each case in the proofs. The natural norm for the
analysis is
\[
\|x\|_W = \|x\|_V := \|x \| + \|h^{\frac12} \beta
\cdot \nabla x \|_h + \| |\beta
\cdot n|^{\frac12} x  \|_{\partial \Omega},
\]
but to keep down the technical detail we will first prove the results
in the reduced norm,
\begin{equation}\label{red_norm}
\|x\|_W = \|x\|_V := \|x  \| + \| |\beta
\cdot n|^{\frac12} x \|_{\partial \Omega}
\end{equation}
and then show how the control of the streamline derivative can be
recovered separately. We also define the continuity norm for all three
methods as
\begin{equation}\label{dualnorm}
\|v\|_+:= \|(|\beta|^{\frac12} h^{-\frac12} + |\sigma_\beta|) v\|+
\||\beta \cdot n|^{\frac12} v\|_{\mathcal{F}},
\end{equation}
where $\sigma_\beta = - \nabla \cdot \beta + \sigma$.
It is straightforward to show that in all cases the approximation
estimate \eqref{approx1} holds with $r = k+{\frac12}$ for any
interpolant in $X_h^k$
with optimal approximation properties.
The error estimate that results from the abstract analysis for the
transport equation may be
written in all cases, for both \eqref{standardFEM} and \eqref{FEM},
\[
\|u - u_h\|_V + \|h^{\frac12} \beta \cdot \nabla (u - u_h)\| +
|u-u_h|_{S_p} \leq C h^{k+\frac12} |u|_{H^{k+1}(\Omega)}.
\]
However the condition \eqref{h_cond_strong} leads to a stronger constraint on the mesh for
the  formulation \eqref{standardFEM} than \eqref{h_cond}. We first prove a Lemma, similar to the superapproximation
result of \cite{JP86}, useful in all three cases. 
\begin{lemma}\label{interp_lem}
Let $\pi$ be an interpolation operator that satisfies \eqref{approx} and \eqref{discrete_commutator}
then there holds
\[
 \|e^{-\eta} u_h - \pi e^{-\eta} u_h\|_{V} + \|e^{-\eta} u_h - \pi
 e^{-\eta} u_h\|_+ + |e^{-\eta} u_h - \pi e^{-\eta} u_h|_{S_x} \leq
 \Pi(h) \|u_h\|_V,
\]
where $x=a,p$ and $ \Pi(h)  = C_{\gamma\beta\sigma}
c_{dc,e^{-\eta}} \,h^{\frac12}$. Here $c_{dc,e^{-\eta}}$ refers to the maximum
constant of \eqref{discrete_commutator} for $n=0,1,2$. The result
holds for all the three methods presented above.
\end{lemma}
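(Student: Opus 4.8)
The plan is to reduce everything to the discrete commutator property \eqref{discrete_commutator} applied to $\varphi = e^{-\eta}$, which is legitimate because $\eta \in W^{2,\infty}(\Omega)$ by \eqref{eq:stab_func}, whence $e^{-\eta} \in W^{2,\infty}(\Omega)$ with $W^{2,\infty}$-norm controlled by $\beta$, $\sigma$ and the stabilization parameters through $\eta$. Writing $\delta := e^{-\eta} u_h - \pi e^{-\eta} u_h$ and invoking \eqref{discrete_commutator} with $n = 0, 1, 2$ immediately yields the three elementwise bounds
\[
\|\delta\| \leq c_{dc,0}\, h \|u_h\|, \quad \|\nabla \delta\|_h \leq c_{dc,1} \|u_h\|, \quad \|D^2 \delta\|_h \leq c_{dc,2}\, h^{-1}\|u_h\|,
\]
which are the only structural facts about $\pi$ that I would use. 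Everything else is a matter of inserting these into the various norms and using the trace inequality \eqref{inverse_eq} to pass to faces.

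For the reduced norm \eqref{red_norm}, the volume contribution is bounded directly by $c_{dc,0}h\|u_h\| \le c_{dc,0} h^{1/2}\|u_h\|$, and for the boundary contribution I would apply the trace inequality elementwise, $\||\beta\cdot n|^{1/2}\delta\|_{\partial\Omega}^2 \lesssim \|\beta\|_{L^\infty} (h^{-1}\|\delta\|^2 + h\|\nabla\delta\|_h^2)$, and substitute the $n=0,1$ commutator estimates; both resulting terms scale like $\|\beta\|_{L^\infty} h\|u_h\|^2$, giving the $h^{1/2}$ rate. The continuity norm \eqref{dualnorm} is handled identically: its weighted volume part contributes $(\|\beta\|_{L^\infty}^{1/2} h^{-1/2} + \|\sigma_\beta\|_{L^\infty})\|\delta\| \lesssim (\|\beta\|_{L^\infty}^{1/2} h^{1/2} + \|\sigma_\beta\|_{L^\infty}h)\|u_h\|$, where the low-order term is absorbed into the $h^{1/2}$ estimate using the standing assumption that $h^{1/2}\|\sigma_\beta\|_{L^\infty}$ is moderate, while its face part $\||\beta\cdot n|^{1/2}\delta\|_{\mathcal F}$ is treated exactly as the boundary term above, now summing the trace inequality over all elements.

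For the stabilization seminorm I would treat the three methods in turn, the mechanism being the same in each: express $s_x(\delta,\delta)$, bound the weight by $\|\beta\|_{L^\infty}$ (and $\gamma_X$, $\gamma_{bc}$), and reduce to the commutator estimates. For GLS I would split $\mathcal L\delta = \beta\cdot\nabla\delta + \sigma\delta$ so that $|\delta|_{S_{p,GLS}}^2 = \gamma_{GLS}\|(|\beta|^{-1}h)^{1/2}\mathcal L\delta\|^2 \lesssim \gamma_{GLS}(\|\beta\|_{L^\infty}h\|\nabla\delta\|_h^2 + h\|\sigma\|_{L^\infty}^2\|\delta\|^2)$, both of order $h\|u_h\|^2$ after inserting the $n=0,1$ bounds. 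For CIP the face factor $h_F^2$ combined with the trace inequality turns $\sum_F h_F^2\|\jump{\nabla\delta}\|_F^2$ into $h\|\nabla\delta\|_h^2 + h^3\|D^2\delta\|_h^2$, where the $n=2$ bound is exactly what makes the second term also of order $h\|u_h\|^2$. For DG the jump penalty reduces by the trace inequality to $\sum_K\|\delta\|_{\partial K}^2 \lesssim h^{-1}\|\delta\|^2 + h\|\nabla\delta\|_h^2 \lesssim h\|u_h\|^2$. In every case the boundary penalty $s_{bc,\pm}$ coincides with the boundary term already estimated. Collecting the three groups and using $\|u_h\| \le \|u_h\|_V$ gives the claim with $\Pi(h) = C_{\gamma\beta\sigma}\,c_{dc,e^{-\eta}}\,h^{1/2}$, the constant absorbing $\|\beta\|_{L^\infty}$, $\|\sigma\|_{L^\infty}$ and the stabilization parameters, and $c_{dc,e^{-\eta}}$ being the largest of $c_{dc,0}, c_{dc,1}, c_{dc,2}$.

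The main obstacle I anticipate is the CIP and DG seminorms rather than the bulk norms: reaching the full $h^{1/2}$ rate there forces use of the $n=2$ commutator estimate (for CIP) together with the elementwise trace inequality carrying the correct powers of $h$, and one must check that the $h^{-1}$ growth of $\|D^2\delta\|_h$ is precisely compensated by the $h_F^2$ weight. A secondary care is bookkeeping: ensuring that all appearances of $\|\sigma\|_{L^\infty}$ and $\|\nabla\cdot\beta\|_{L^\infty}$ enter only through the moderate combination $h^{1/2}(\|\sigma\|_{L^\infty}+\|\nabla\cdot\beta\|_{L^\infty})$, so that the final constant $C_{\gamma\beta\sigma}$ remains $h$-independent and the rate is genuinely $h^{1/2}$.
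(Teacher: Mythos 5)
Your proposal is correct and follows essentially the same route as the paper: the discrete commutator property \eqref{discrete_commutator} for $n=0,1,2$ applied to $\varphi=e^{-\eta}$, the elementwise trace inequality \eqref{inverse_eq} to handle face and boundary contributions, and a method-by-method treatment of the stabilization seminorms in which the $n=2$ estimate compensates the $h_F^2$ weight for CIP. The paper's proof carries out exactly these steps, so there is nothing substantive to add.
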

\begin{proof}
First observe that by inequality \eqref{discrete_commutator} we have
\begin{equation}\label{bas_eu_est}
 h^{-\frac12} \|\delta(e^{-\eta} u_h)\| + h^{\frac12}
 \|\nabla\delta(e^{-\eta} u_h)\| \leq C \max_{n \in
   \{0,1\}}c_{dc,n,e^{-\eta}} h^{\frac12} \|u_h\|,
\end{equation}
recalling that $\delta(e^{-\eta} u_h) := e^{-\eta} u_h - \pi e^{-\eta} u_h$.
Similarly using \eqref{inverse_eq} followed by
\eqref{discrete_commutator} gives
\begin{multline*}
\sum_{K \in \mathcal{T}_h}\|\delta(e^{-\eta} u_h)\|^2_{\partial K}
\leq \sum_{K \in \mathcal{T}_h} c_T^2(
h^{-\frac12} \|\delta(e^{-\eta} u_h)\|^2_{K}  + h^{\frac12}
\|\nabla\delta(e^{-\eta} u_h)\|^2_{ K} )\\ \leq
C^2 \max_{n \in
   \{0,1\}}c^2_{dc,n,e^{-\eta}} h \|u_h\|^2.
\end{multline*}
Using these results in the definitions \eqref{red_norm} and \eqref{dualnorm} we obtain
\begin{multline*}
\|\delta(e^{-\eta} u_h)\|_+ + \|\delta(e^{-\eta} u_h)\|_V 
\leq
C (\|\beta\|_{L^\infty}^{\frac12} + h^{\frac12} \|\sigma_\beta\|
_{L^\infty}^{\frac12} + h^{\frac12} ) \|h^{-\frac12} \delta(e^{-\eta} u_h)\|\\+\| |\beta \cdot n|^{\frac12}\delta(e^{-\eta} u_h)\|_{\mathcal{F}} \leq C_{\beta\sigma} \max_{n \in
   \{0,1\}} c_{dc,n,e^{-\eta}} h^{\frac12} \|u_h\|.
\end{multline*}
For the stabilization norm we first consider the boundary term and
 the three methods separately. 
For the boundary terms we observe that
\[
s_{bc,\pm}(\delta(e^{-\eta} u_h),\delta(e^{-\eta} u_h))^{\frac12} \leq
\gamma_{bc}^{\frac12}\|\delta(e^{-\eta} u_h)\|_V \leq C_{\gamma\beta\sigma} \max_{n \in
   \{0,1\}} c_{dc,n,e^{-\eta}} h^{\frac12} \|u_h\|.
\]
Then note that for the GLS method
\begin{multline*}
s_{p,GLS}(\delta(e^{-\eta} u_h),\delta(e^{-\eta} u_h))^{\frac12} \leq \gamma_{GLS}^{\frac12} h^{\frac12} (
\|\beta\|^{\frac12}_{L^\infty}\|\nabla \delta(e^{-\eta} u_h)\|_h + \|\sigma\|_{L^\infty} \|\delta(e^{-\eta} u_h)\| )\\
\leq C_{\gamma\beta\sigma} \max_{n \in
   \{0,1\}} c_{dc,n,e^{-\eta}} h^{\frac12} \|u_h\|
\end{multline*}
and similarly
$
s_{a,GLS}(\delta(e^{-\eta} u_h),\delta(e^{-\eta} u_h))^{\frac12} \leq  C_{\gamma\beta\sigma_{\beta}} \max_{n \in
   \{0,1\}} c_{dc,n,e^{-\eta}} h^{\frac12} \|u_h\|.
$

For the CIP-method we use element-wise trace inequalities followed by
\eqref{discrete_commutator}, with $n=1$ and $n=2$,
\begin{multline*}
s_{CIP}(\delta(e^{-\eta} u_h),\delta(e^{-\eta} u_h))^{\frac12} \\
\leq \gamma_{CIP}^{\frac12} c_T h^{\frac12}
\|\beta\|_{L^\infty} (\sum_{K \in \mathcal{T}_h} (\|\nabla
\delta(e^{-\eta} u_h)\|_K^2 + h^2 \|D^2 \delta(e^{-\eta} u_h)\|_K^2
))^{\frac12} \\
\leq \gamma_{CIP}^{\frac12} c_T h^{\frac12}
\|\beta\|_{L^\infty} (c_{dc,1,e^{-\eta}} + c_{dc,2,e^{-\eta}}) \|u_h\|
\end{multline*}

Finally for the DG-method, we simply observe that 
\[
s_{DG}(\delta(e^{-\eta} u_h),\delta(e^{-\eta} u_h))^{\frac12} \leq C \gamma_{DG}\|\delta(e^{-\eta} u_h)\|_+.
\]
\end{proof}
\subsection{Galerkin-least-squares stabilization}\label{subsec:GLS}
We assume that $$V_h = X_h^k \cap H^1(\Omega),\, W_h=V_h$$ 
Let $\pi_V,\, \pi_W$ be defined by the Lagrange
interpolator $i_h$. It follows by the construction of the
stabilization operator and \eqref{consist1}  that \eqref{galortho1}
and \eqref{galortho2} hold (recalling that $z\equiv 0$.) It is also
straightforward to show that \eqref{approx1} holds with $r=k+\frac12$.
We collect the proof of the remaining assumptions of Proposition \ref{standard_conv} and Theorem
\ref{stab_conv} in two propositions.
\begin{proposition}(Satisfaction of assumptions for \eqref{standardFEM} with GLS)\label{assumpGLS}
Let the bilinear forms of \eqref{standardFEM} be defined by
\eqref{discrete_bilin} and \eqref{GLS_prim} with $\gamma_{bc}\ge 1$. Then \eqref{strong_bound_va}--
\eqref{cont2} are
satisfied, with $\epsilon(h) = C_{\gamma\beta\sigma\eta} h^{\frac12}$.
\end{proposition}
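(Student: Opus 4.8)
The plan is to verify \eqref{strong_bound_va}, \eqref{stab_v_bound3} and \eqref{cont2} by combining the weighted coercivity of Lemma~\ref{basic_stability} with the superapproximation estimate of Lemma~\ref{interp_lem}. I work throughout in the reduced norm \eqref{red_norm}; control of the streamline derivative is then recovered from $\beta\cdot\nabla u_h=\mathcal{L}u_h-\sigma u_h$ and \eqref{GLS_prim}, which give $\|h^{1/2}\beta\cdot\nabla u_h\|\le C(s_{p,GLS}(u_h,u_h)^{1/2}+h^{1/2}\|u_h\|)$. For \eqref{strong_bound_va} the test function is $v_a(u_h):=\pi e^{-\eta}u_h\in V_h$, with $\eta$ given by \eqref{eq:stab_func}. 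Writing $\delta:=\delta(e^{-\eta}u_h)=e^{-\eta}u_h-v_a$, I split
\[
a_h(u_h,v_a)+s_p(u_h,v_a)=[\,a_h(u_h,e^{-\eta}u_h)+s_p(u_h,e^{-\eta}u_h)\,]-[\,a_h(u_h,\delta)+s_p(u_h,\delta)\,],
\]
where the first bracket produces the stability and the second gathers the interpolation error.

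In the first bracket, Lemma~\ref{basic_stability} (negative exponent) together with the choice \eqref{eq:stab_func}, for which $\tfrac12\beta\cdot\nabla\eta-\tfrac12\nabla\cdot\beta+\sigma\ge\tfrac12$, shows that $a_h(u_h,e^{-\eta}u_h)$ bounds $\tfrac12 e^{-\|\eta\|_{L^\infty}}\|u_h\|^2$ plus the positive outflow term $\tfrac12\int_{\partial\Omega^+}|\beta\cdot n|e^{-\eta}u_h^2$, minus the inflow term $\tfrac12\int_{\partial\Omega^-}|\beta\cdot n|e^{-\eta}u_h^2$. The latter is dominated by the nonnegative boundary stabilization $s_{bc,-}(u_h,e^{-\eta}u_h)=\gamma_{bc}\int_{\partial\Omega^-}|\beta\cdot n|e^{-\eta}u_h^2$ once $\gamma_{bc}\ge1$, leaving a positive multiple of $\||\beta\cdot n|^{1/2}u_h\|_{\partial\Omega}^2$ and of the inflow stabilization. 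For the interior stabilization I use $\mathcal{L}(e^{-\eta}u_h)=e^{-\eta}(\mathcal{L}u_h-(\beta\cdot\nabla\eta)u_h)$ to write $s_{p,GLS}(u_h,e^{-\eta}u_h)=(\gamma_{GLS}|\beta|^{-1}h\mathcal{L}u_h,e^{-\eta}\mathcal{L}u_h)-(\gamma_{GLS}|\beta|^{-1}h\mathcal{L}u_h,e^{-\eta}(\beta\cdot\nabla\eta)u_h)$; the first term is $\ge e^{-\|\eta\|_{L^\infty}}s_{p,GLS}(u_h,u_h)$, while the second is bounded by $Ch^{1/2}s_{p,GLS}(u_h,u_h)^{1/2}\|u_h\|$ since $|\beta|^{-1/2}|\beta\cdot\nabla\eta|\le|\beta|^{1/2}\|\nabla\eta\|_{L^\infty}$. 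Collecting, the first bracket is bounded below by $c_s(\|u_h\|_V^2+|u_h|_{S_p}^2)$ with $c_s\sim e^{-\|\eta\|_{L^\infty}}$, up to terms of order $h^{1/2}(\|u_h\|_V^2+|u_h|_{S_p}^2)$.

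In the second bracket, continuity of $u_h$ makes the jump terms in \eqref{discrete_bilin} vanish, so $a_h(u_h,\delta)=(\mathcal{L}u_h,\delta)\le\gamma_{GLS}^{-1/2}s_{p,GLS}(u_h,u_h)^{1/2}\,\||\beta|^{1/2}h^{-1/2}\delta\|$, and $\||\beta|^{1/2}h^{-1/2}\delta\|\le\|\delta\|_+\le\Pi(h)\|u_h\|_V$ by Lemma~\ref{interp_lem}; likewise $|s_p(u_h,\delta)|\le|u_h|_{S_p}|\delta|_{S_p}\le|u_h|_{S_p}\Pi(h)\|u_h\|_V$. An arithmetic--geometric inequality applied to the two norm factors keeps the factor $\Pi(h)=Ch^{1/2}$ intact, so both contributions are $\le Ch^{1/2}(\|u_h\|_V^2+|u_h|_{S_p}^2)$, which establishes \eqref{strong_bound_va} with $\epsilon(h)=C_{\gamma\beta\sigma\eta}h^{1/2}$. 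The bound \eqref{stab_v_bound3} follows from $v_a=e^{-\eta}u_h-\delta$ by the triangle inequality, using $e^{-\eta}\le1$ for the $L^2$ and boundary parts, the same identity for $\mathcal{L}(e^{-\eta}u_h)$ for the GLS seminorm, and Lemma~\ref{interp_lem} for $\delta$. For \eqref{cont2}, continuity of $v-\pi_V v$ and integration by parts give
\[
a_h(v-\pi_V v,x_h)=-(v-\pi_V v,\beta\cdot\nabla x_h)+(v-\pi_V v,\sigma_\beta x_h)+\langle\beta\cdot n\,(v-\pi_V v),x_h\rangle_{\partial\Omega},
\]
and each term is bounded by the matching weight in \eqref{dualnorm}, using $\beta\cdot\nabla x_h=\mathcal{L}x_h-\sigma x_h$ so that $\||\beta|^{-1/2}h^{1/2}\beta\cdot\nabla x_h\|\le C(s_{p,GLS}(x_h,x_h)^{1/2}+h^{1/2}\|x_h\|)$; this yields $c_a=C_{\gamma\beta\sigma}$.

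I expect the main obstacle to be the bookkeeping in \eqref{strong_bound_va}: one must ensure that every residual term genuinely carries the factor $h^{1/2}$ and feeds into $\epsilon(h)$ rather than eroding the coercive constant, and that the inflow boundary contribution is truly absorbed by $s_{bc,-}$ under $\gamma_{bc}\ge1$, tracking the discrepancy between the weight $e^{-\eta}$ appearing in the coercive term and the unit weight in the target norm. This interplay between the exponential weight $\eta$ and the stabilization is exactly what fixes $c_s\sim e^{-\|\eta\|_{L^\infty}}$ and hence the dependence of $\epsilon(h)$ on $\beta$ and $\sigma$ through \eqref{eq:stab_func}.
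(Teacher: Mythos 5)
Your proof is correct and follows essentially the same route as the paper: the test function $\pi_V(e^{-\eta}u_h)$ with Lemma~\ref{basic_stability}, the product-rule splitting of $\mathcal{L}(e^{-\eta}u_h)$ for the GLS term, Lemma~\ref{interp_lem} for the interpolation error $\delta(e^{-\eta}u_h)$, and integration by parts for \eqref{cont2}. The only (immaterial) difference is that in the continuity step you bound $\beta\cdot\nabla x_h$ directly via $\mathcal{L}x_h-\sigma x_h$, whereas the paper bounds $\mathcal{L}^*x_h$ by $|x_h|_{S_p}$ plus $h^{1/2}$-weighted zeroth-order terms; the two computations are equivalent.
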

\begin{proof}
To show \eqref{strong_bound_va} we take $v_a := \pi_V (e^{-\eta} u_h)$ with
$\eta$ defined by \eqref{eq:stab_func} and use first inequality of Lemma \ref{basic_stability} to obtain
\begin{multline}\label{bas_stab_GLS}
 a_h(u_h,\pi_V (e^{-\eta} u_h)) =  a_h(u_h, e^{-\eta} u_h)  - a_h(u_h,\delta (e^{-\eta} u_h))
\\ \ge -\gamma_{GLS}^{-\frac12} |u_h|_{S_p}
\|\delta (e^{-\eta} u_h)\|_+
\\+ \frac12 \int_{\partial \Omega} (\beta \cdot n) u_h^2
e^{-\eta} ~\mbox{d}s+ 
\frac12 \|u_h e^{-\frac{\eta}{2}} \|^2.
\end{multline}
Using Lemma \ref{interp_lem} we have
\begin{multline}\label{a_stab_part}
\frac12 \|u_h e^{-{\frac{\eta}{2}}} \|^2  + \frac12 \int_{\partial \Omega} (\beta \cdot n)_+ u_h^2
e^{-\eta} ~\mbox{d}s \leq  a_h(u_h,\pi_V (e^{-\eta} u_h)) \\
 - \frac12 \int_{\partial \Omega} (\beta \cdot n)_- u_h^2 e^{-\eta}
 +\frac12 \gamma_{GLS}^{-\frac12} \Pi(h) (|u_h|^2_{S_p} + \|u_h\|^2_V).
\end{multline}
We need a similar bound for the stabilization operator using the function
$v_a(u_h)$. This is straightforward observing that
\begin{multline*}
s_{p}(u_h,v_a(u_h)) = (\mathcal{L} u_h
,\gamma_{GLS} |\beta|^{-1} h \mathcal{L} (u_h e^{-\eta}))  + s_{bc,-}(u_h, e^{-\eta} u_h) \\-
(\mathcal{L} u_h , \gamma_{GLS} |\beta|^{-1} h (\mathcal{L} \delta (e^{-\eta} u_h))
- s_{bc,-}(u_h, \delta (e^{-\eta} u_h)))
\\
\ge  \| (\gamma_{GLS} h |\beta|^{-1})^{\frac12} \mathcal{L} u_h
e^{-\frac{\eta}{2}}\|^2+ \gamma_{bc} \|| (\beta \cdot n)_-|^{\frac12} u_h
e^{-\frac{\eta}{2}}\|^2_{\partial \Omega} \\ - |u_h|_{S_p}\left(| \delta (e^{-\eta} u_h)|_{S_p}+ \| (\gamma_{GLS} h |\beta|^{-1})^{\frac12}
(\mathcal{L} e^{-\frac{\eta}{2}} )u_h\|\right).
\end{multline*}
Combining this result with \eqref{a_stab_part}, using
\eqref{interp_lem} it follows that for $\gamma_{bc}$ large enough
\begin{multline}\label{a_s_tot}
\frac12 \inf_{x \in \Omega} e^{-\eta}  (\|u_h\|_V^2  + |u_h|^2_{S_p}) \leq  a_h(u_h,\pi_V
(e^{-\eta} u_h)) 
+ s_{p,GLS}(u_h,\pi_V (e^{-\eta} u_h)) \\
+(C_{\gamma}\Pi(h) + (\gamma_{GLS} h |\beta|^{-1})^{\frac12}
\sup_{x \in \Omega} |\mathcal{L} e^{-\frac{\eta}{2}} |) (|u_h|^2_{S_p} + \|u_h\|^2_V).
\end{multline}
We conclude that \eqref{strong_bound_va} holds with $c_s = \tfrac12
\inf_{x \in \Omega} e^{-\eta}$
and 
\begin{equation*}
\epsilon(h)= (C_{\gamma}\Pi(h) + (\gamma_{GLS} h |\beta|^{-1})^{\frac12}
\sup_{x \in \Omega} |\mathcal{L} e^{-\frac{\eta}{2}} |)  \sim
 C_{\gamma\beta\sigma\eta} h^{\frac12}.
\end{equation*}
Considering now \eqref{stab_v_bound3} we have
\begin{equation}\label{1st_arg_stab_v}
\|v_{a}(u_h)\|_V \leq \|e^{-\eta} u_h\|_V + \|\delta(e^{-\eta} u_h) \|_V \leq (\sup_{x \in \Omega} e^{-\eta} + \Pi(h) ) \|u_h\|_V
\end{equation}
and for the stabilization part,
\begin{multline}\label{1st_arg_stab_s}
|v_{a}(u_h)|_{S_p} \leq   \sup_{x \in \Omega}  |\mathcal{L} e^{-\frac{\eta}{2}} |  h^{\frac12} C_{\gamma}
\|u_h\|_V +\sup_{x \in \Omega} e^{-\eta}|u_h|_{S_p} + |\delta(e^{-\eta} u_h)|_{S_p} \\
\leq (\sup_{x \in \Omega} e^{-\eta}  h^{\frac12}
C_{\gamma\beta\sigma\eta}+\Pi(h))\|u_h\|_V  + \sup_{x \in \Omega} e^{-\eta} |u_h|_{S_p}.
\end{multline}
It follow that \eqref{stab_v_bound3} holds for any $$c_\eta \ge \max(\sup_{x \in \Omega}  |\mathcal{L} e^{-\frac{\eta}{2}} |  h^{\frac12}
C_{\gamma\beta\sigma}, \sup_{x \in \Omega} e^{-\eta} +\Pi(h))
\sim C_{\gamma\beta\sigma\eta} h^{\frac12} + \sup_{x \in \Omega} e^{-\eta}.$$
For the continuity \eqref{cont2} we first use an
integration by parts and Cauchy-Schwarz inequality to obtain
\begin{multline}\label{bas_continuity}
a_h(v - \pi_V v, x_h) =   (u - \pi_V u, \mathcal{L}^* x_h)+
\int_{\partial \Omega} (\beta \cdot n)( v - \pi_V v) x_h ~\mbox{d}s\\
\leq
\|u - \pi_V u\|_+ (\|(|\beta|^{-1} h)^{\frac12}
\mathcal{L}^* x_h\| + \|x_h\|_V).
\end{multline}
To conclude we need to express the norm over the adjoint operator in
the right hand side by the stabilization of the primal
operator. Observe that for all $x_h \in V_h$ there holds
\begin{equation}\label{LstaraboundbySp}
\||(\beta|^{-1} h)^{\frac12}
\mathcal{L}^* x_h\|\leq |x_h|_{S_p} + C_{\gamma\beta}( h^{\frac12}
(2 \|\sigma\|_{L^\infty(\Omega)}  + \|\nabla \cdot \beta
\|_{L^\infty(\Omega)})) \|x_h\|_V.
\end{equation}
Collecting the results of \eqref{bas_continuity} and
\eqref{LstaraboundbySp} we see that $c_a \ge 1+ C_{\gamma\beta\sigma} h^{\frac12}$.
\end{proof}
\begin{proposition}(Satisfaction of the assumptions for \eqref{FEM}
  with GLS)\label{assumpGLS_new}
 Let the bilinear forms of \eqref{FEM} be defined by
\eqref{discrete_bilin}, \eqref{GLS_prim} and
\eqref{GLS_adjoint}. Then the
inequalities \eqref{galortho2}--\eqref{cont1} hold with $\tilde \epsilon(h) = 0
$.
\end{proposition}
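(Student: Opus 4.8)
The plan is to follow the pattern of Proposition~\ref{assumpGLS} but to capitalise on the structural difference between \eqref{strong_bound_va} and \eqref{forward_stability}--\eqref{adjoint_stability}: in the latter only the $V$- (respectively $W$-) norm appears on the left, while the stabilization seminorm is free to be absorbed into the $\tilde c_\eta$-term. The Galerkin orthogonality \eqref{galortho2} is immediate from the strong consistency \eqref{consist1} together with $z\equiv 0$, exactly as in the standard case. As test functions I would take $v_a(u_h):=\pi_V(e^{-\eta}u_h)$ and $v_{a*}(z_h):=-\pi_V(e^{-\eta}z_h)$, with $\eta$ given by \eqref{eq:stab_func} and $\delta(e^{-\eta}x_h):=e^{-\eta}x_h-\pi_V(e^{-\eta}x_h)$; both lie in $V_h=W_h$.

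For the forward stability \eqref{forward_stability} I would write $a_h(u_h,v_a(u_h))=a_h(u_h,e^{-\eta}u_h)-a_h(u_h,\delta(e^{-\eta}u_h))$. The first term is bounded below by the first identity of Lemma~\ref{basic_stability}: the bulk coefficient is $\ge\tfrac12$ for the choice \eqref{eq:stab_func}, producing $\tfrac12\|u_h e^{-\eta/2}\|^2$ plus the nonnegative outflow boundary contribution, while the negative inflow part is absorbed by $s_{bc,-}\subset s_p$. Since the spaces are continuous, $a_h(u_h,\cdot)=(\mathcal{L}u_h,\cdot)$, so the consistency error obeys $a_h(u_h,\delta(e^{-\eta}u_h))\le \gamma_{GLS}^{-\frac12}|u_h|_{S_p}\|\delta(e^{-\eta}u_h)\|_+$, and Lemma~\ref{interp_lem} gives $\|\delta(e^{-\eta}u_h)\|_+\le \Pi(h)\|u_h\|_V$ with $\Pi(h)\sim h^{\frac12}$. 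The decisive step is then a single Young inequality, $\gamma_{GLS}^{-\frac12}|u_h|_{S_p}\Pi(h)\|u_h\|_V\le \tfrac{\tilde c_s}{2}\|u_h\|_V^2+C\Pi(h)^2|u_h|_{S_p}^2$: the first summand is moved to the left and the second joins the $\tilde c_\eta|u_h|_{S_p}^2$-term. Because the left of \eqref{forward_stability} carries only $\|u_h\|_V^2$, no residual $\|u_h\|_V^2$ remains on the right, and one reads off $\tilde\epsilon(h)=0$. This is exactly where the present formulation improves on Proposition~\ref{assumpGLS}, where the presence of $|u_h|_{S_p}^2$ on the left of \eqref{strong_bound_va} forced $\epsilon(h)\sim h^{\frac12}$.

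The adjoint stability \eqref{adjoint_stability} is symmetric: with $v_{a*}(z_h)=-\pi_V(e^{-\eta}z_h)$ the second identity of Lemma~\ref{basic_stability} has bulk coefficient $\le-\tfrac12$, so the sign flip yields $+\tfrac12\|z_h e^{-\eta/2}\|^2$, the outflow term is absorbed by $s_{bc,+}\subset s_a$, and the consistency error, after one integration by parts, is bounded by $C\|\delta(e^{-\eta}z_h)\|_+|z_h|_{S_a}$ and treated by the same Young argument, again with $\tilde\epsilon(h)=0$. The bounds \eqref{stab_v_bound2}--\eqref{stab_v_bound1} follow from the triangle inequality $|\pi_V(e^{-\eta}x_h)|_{S_y}\le|e^{-\eta}x_h|_{S_y}+|\delta(e^{-\eta}x_h)|_{S_y}$, the second term controlled by Lemma~\ref{interp_lem}; for the first one rewrites $\mathcal{L}^*(e^{-\eta}u_h)=e^{-\eta}(-\mathcal{L}u_h+(2\sigma-\nabla\cdot\beta+\beta\cdot\nabla\eta)u_h)$ and, symmetrically, $\mathcal{L}(e^{-\eta}z_h)$ through $\mathcal{L}^*z_h$, so that the leading streamline part is controlled by $|u_h|_{S_p}$ (respectively $|z_h|_{S_a}$) and the zeroth-order remainder carries the $h^{\frac12}$-factor of $\breve\epsilon(h)$. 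The one term without an $h$-factor is the outflow boundary part of $|v_a(u_h)|_{S_a}$, bounded only by $\sup e^{-\eta}\gamma_{bc}^{\frac12}\|u_h\|_V$; this is harmless since the abstract theory requires merely $\breve\epsilon(h)\ge0$, not $\breve\epsilon(0)=0$.

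Finally, the continuity \eqref{cont1} is the cleanest step: an integration by parts gives $a_h(v-\pi_V v,x_h)=(v-\pi_V v,\mathcal{L}^*x_h)+\int_{\partial\Omega}(\beta\cdot n)(v-\pi_V v)x_h\,\mbox{d}s$, and since $s_{a,GLS}$ is built precisely from $\mathcal{L}^*$ one has $\|(|\beta|^{-1}h)^{\frac12}\mathcal{L}^*x_h\|=\gamma_{GLS}^{-\frac12}|x_h|_{S_{a,GLS}}\le\gamma_{GLS}^{-\frac12}|x_h|_{S_a}$, so \eqref{cont1} holds with an $h$-independent $c_a$ --- an improvement over \eqref{LstaraboundbySp}, where $\mathcal{L}^*$ could only be controlled through $S_p$ up to an $O(h^{\frac12})$ remainder. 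I expect the main difficulty to be bookkeeping rather than any single estimate: in every instance one must check that the consistency error really appears multiplied by a stabilization seminorm, so that Young's inequality deposits the norm factor on the left and the seminorm factor into $\tilde c_\eta$, and that the weighted operators are re-expressed through $\mathcal{L},\mathcal{L}^*$ so their streamline derivatives land in $S_p,S_a$ instead of in $\|\cdot\|_V$.
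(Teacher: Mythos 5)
Your proposal is correct and follows essentially the same route as the paper's proof: the same test functions $\pi_V(e^{-\eta}u_h)$ and $-\pi_V(e^{-\eta}z_h)$, Lemma \ref{basic_stability} plus Lemma \ref{interp_lem} for the consistency error, the Young-inequality absorption that deposits the full $\|u_h\|_V^2$ remainder on the left (hence $\tilde\epsilon(h)=0$) and the $|u_h|_{S_p}^2$ piece into $\tilde c_\eta$, the rewriting of the weighted adjoint operator through $\mathcal{L}$ so that only the boundary part of $|v_a(u_h)|_{S_a}$ contributes the $h$-independent $\gamma_{bc}^{1/2}$ term in $\breve\epsilon(h)$, and the direct control of $\mathcal{L}^*x_h$ by $|x_h|_{S_a}$ in the continuity step. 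Your structural explanation of why $\tilde\epsilon(h)=0$ here versus $\epsilon(h)\sim h^{1/2}$ in Proposition \ref{assumpGLS} matches the paper's reasoning exactly.
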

\begin{proof}
Starting from the inequality \eqref{bas_stab_GLS} with $v_a(u_h) := \pi_W(e^{-\eta} u_h)$ we immediately get,
\begin{multline*}
\frac12 \inf_{x\in \Omega} e^{-\eta} \|u_h\|^2_V \leq a_h(u_h,\pi_W (e^{-\eta} u_h)) +
\gamma_{GLS}^{-\frac12} \Pi(h)
|u_h|_{S_p} \|u_h\|_V \\ + \sup_{x\in \Omega} e^{-\eta} \gamma_{bc}^{-1}|u_h|_{S_p}^2
\end{multline*}
from which we deduce, using $(\inf_{x\in \Omega} e^{-\eta} )^{-1} = \sup_{x\in \Omega} e^{\eta} $,
\begin{equation*}
\frac14 \inf_{x\in \Omega} e^{-\eta} \|u_h\|^2_V \leq a_h(u_h,\pi_W (e^{-\eta} u_h)) +(\sup_{x\in \Omega} e^{\eta} \gamma_{GLS}^{-1} \Pi(h)^2+\sup_{x\in \Omega} e^{-\eta}\gamma_{bc}^{-1} )|u_h|_{S_p}^2
\end{equation*}
which is the required inequality with $\tilde
\epsilon(h)=0$, $\tilde c_s = \tfrac14  \inf_{x\in \Omega} e^{-\eta} $and $$\tilde c_\eta \ge \sup_{x\in \Omega} e^{\eta}
\gamma_{GLS}^{-1} \Pi(h)^2+\sup_{x\in \Omega} e^{-\eta}\gamma_{bc}^{-1}.$$

 In a similar fashion we may show that
\eqref{adjoint_stability}
holds, also with the weight $e^{-\eta}$, and corresponding test
function $v_{a*}(z_h) = -\pi_V (e^{-\eta} z_h)$. First observe that in
this case using Lemma \ref{basic_stability} (second equation), 
\begin{multline*}
\frac12 \inf_{x\in \Omega} e^{-\eta} \|z_h\|^2 - \frac12 \int_{\partial \Omega}
(\beta \cdot n) z_h^2 e^{-\eta} ~\mbox{d}s \leq -a_h(e^{-\eta}
z_h,z_h) \\
= a_h(-\pi_V (e^{-\eta} z_h),z_h) -  a_h(\delta (e^{-\eta} z_h),z_h).
\end{multline*}
For the second term in the right hand side we have after integration
by parts and application of Lemma \ref{interp_lem}
\begin{multline*}
a_h(\delta (e^{-\eta} z_h),z_h) = \int_{\partial \Omega} (\beta \cdot
n) \delta (e^{-\eta} z_h) z_h ~\mbox{d}s + (\delta (e^{-\eta} z_h),
\mathcal{L^*} z_h) \\
\leq C \|\delta (e^{-\eta} z_h)\|_+ |z_h|_{S_a} \leq C \Pi(h)
\|z_h\|_V |z_h|_{S_a}.
\end{multline*}
Here we used that the boundary penalty on $z_h$ is active on the
whole boundary. We may then conclude as before that
\[
\frac14 \inf_{x\in \Omega} e^{-\eta} \|z_h\|^2_W \leq a_h(-\pi_V (e^{-\eta} z_h),z_h) 
+(\sup_{x\in \Omega} e^{\eta} C_\gamma \Pi(h)^2+\sup_{x\in \Omega} e^{-\eta} \gamma_{bc}^{-1}) |z_h|_{S_a}^2
\]
with similar constants as before. 

The inequalities of \eqref{stab_v_bound2} and \eqref{stab_v_bound1} follow by similar arguments as
\eqref{1st_arg_stab_v} and \eqref{1st_arg_stab_s}. The only differences occur in the right inequalities.
\begin{multline*}
|v_{a}(u_h)|_{S_a} \leq h^{\frac12} \gamma_{GLS}^{\frac12} \sup_{x \in \Omega} \mathcal{L^*} e^{-\eta}
\|u_h\|_V + \sup_{x \in \Omega} e^{-\eta} |u_h|_{S_a} + |\delta(e^{-\eta} u_h)|_{S_a} \\
\leq (C_{\gamma\beta\sigma\eta} h^{\frac12}  \sup_{x \in \Omega}
e^{-\eta} + \Pi(h)) \|u_h\|_V +\sup_{x \in \Omega}
e^{-\eta}
|u_h|_{S_a}.
\end{multline*}
We then use an inequality similar to \eqref{LstaraboundbySp}, but this time
adding
the boundary penalty term that is included in the stabilization in
formulation \eqref{FEM} (see \eqref{stab_ops_bound}.)
\begin{equation}\label{SaboundbySp}
|u_h|_{S_a} \leq  |u_h|_{S_p} + C_{\beta\gamma}h^{\frac12}
(2 \|\sigma\|_{L^\infty(\Omega)}  + \|\nabla \cdot \beta
\|_{L^\infty(\Omega)})\|u_h\|_V +  \gamma_{bc}^{\frac12} \||\beta
\cdot n|^{\frac12} u_h\|_{\partial \Omega}.
\end{equation}
Note that the boundary contribution can not be controlled by
$|u_h|_{S_p}$ as one would like, but must be controlled using the
$V$-norm. This
adds an $O(\gamma_{bc}^{\frac12})$ contribution to the constant in front of $\|u_h\|_V$.
\begin{equation}\label{SaboundbySpfinal}
|u_h|_{S_a} \leq |u_h|_{S_p} + (\gamma_{bc}^{\frac12} + C_{\beta\gamma}(h^{\frac12}
(2 \|\sigma\|_{L^\infty(\Omega)}  + \|\nabla \cdot \beta
\|_{L^\infty(\Omega)}))\|u_h\|_V.
\end{equation}

The proof of \eqref{stab_v_bound1} is similar, but here the stronger
adjoint boundary penalty can control the boundary term, leading to
\[
|z_h|_{S_p} \leq |z_h|_{S_a} + (C_{\beta\gamma}(h^{\frac12}
(2 \|\sigma\|_{L^\infty(\Omega)}  + \|\nabla \cdot \beta
\|_{L^\infty(\Omega)}))\|z_h\|_W.
\]
We conclude that the inequalities \eqref{stab_v_bound2} and
\eqref{stab_v_bound1} hold with 
$$\tilde c_\eta \ge \sup_{x \in \Omega}
e^{-\eta} + \Pi(h) \mbox{ and }\breve \epsilon(h) \ge C_{\beta\sigma\gamma\eta}
h^{\frac12} + \sup_{x \in \Omega}
e^{-\eta}\gamma_{bc}^{\frac12}
$$
The continuity \eqref{cont1} is
immediate by
integration by parts and Cauchy-Schwarz inequality,
\begin{align*}
a_h(v - \pi_V v, x_h) &=   (u - \pi_V u, \mathcal{L}^* x_h)+
\int_{\partial \Omega} (\beta \cdot n)( v - \pi_V v) x_h ~\mbox{d}s\\
&\leq C_\gamma
\|u - \pi_V u\|_+ (|x_h|_{S_a} + \|x_h\|_W).
\end{align*}
\end{proof}

\begin{remark}
Note that for the GLS-method $\tilde \epsilon(h)=0$ in
\eqref{forward_stability} and \eqref{adjoint_stability} indicating
that the scheme is unconditionally stable. This follows from the fact that the
whole residual is considered in the stabilization term. This nice
feature however only holds under exact quadrature. When the integrals are
approximated, the quadrature error once again gives rise to
oscillation terms from data that introduces a non-zero
contribution to $\tilde \epsilon(h)$.
\end{remark}
\subsection{Continuous interior penalty}\label{subsec:CIP}
In this case also $W_h = V_h:= X_h^k \cap H^1(\Omega)$, but the stabilization added to the standard Galerkin
formulation is a penalty on the jump of the gradient over element
faces \cite{DD76,BH04}.
The key observation is that the following discrete approximation
result holds for $\gamma_{CIP}$ large enough (see \cite{Bu05, BFH06})
\begin{equation}\label{oswald_int}
\|h^{\frac12} |\beta_h|^{-\frac12}(\beta_h \cdot \nabla u_h - I_{os} \beta_h \cdot \nabla
u_h)\|^2 \leq  s_{CIP}(u_h,u_h).
\end{equation}
Here $\beta_h$ is some piecewise affine interpolant of the velocity
vector field $\beta$ and $I_{os}$ is the quasi-interpolation operator defined in each
node of the mesh as a straight average of the function values from
triangles sharing that node,
\[
(I_{os} \beta_h \cdot \nabla u_h)(x_i) = N_i^{-1} \sum_{\{K : x_i \in K\}} (\beta_h \cdot \nabla u_h)(x_i)\vert_K,
\]
with $N_i := \mbox{card} \{K : x_i \in K\}$.
Stability is then a
consequence of the following lemma:
\begin{lemma}\label{stab_cont}
The following inequalities hold.
\begin{equation}\label{discrete_interp}
\inf_{v_h \in V_h} \|h^{\frac12}(\mathcal{L} u_h - v_h)\| \leq C_{\gamma\beta}
s_{CIP}(u_h,u_h)^{\frac12} + \epsilon_{CIP}(h) \|u_h\|
\end{equation}
and
\begin{equation}\label{disc_adjoint}
\inf_{w_h \in W_h} \|h^{\frac12}(\mathcal{L} ^*z_h - w_h)\| \leq C_{\gamma\beta}
s_{CIP}(z_h,z_h)^{\frac12} + \epsilon_{CIP}(h) \|z_h\|,
\end{equation}
with
$
\epsilon_{CIP}(h)  \sim  h^{\frac32} (\|\beta\|_{W^{2,\infty}(\Omega)} + c_{dc,0,\sigma}).
$
\end{lemma}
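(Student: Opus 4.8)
The plan is to prove each inequality by exhibiting an explicit competitor $v_h\in V_h$ (resp.\ $w_h\in W_h$) obtained by approximating the streamline and reaction parts of the operator separately, and then to estimate the three resulting error contributions in the $\|h^{\frac12}\cdot\|$ norm. For \eqref{discrete_interp} I would write $\mathcal{L}u_h=\beta\cdot\nabla u_h+\sigma u_h$ and choose
\[
v_h:=I_{os}(\beta_h\cdot\nabla u_h)+i_h(\sigma u_h),
\]
which lies in $V_h$ because $\beta_h\cdot\nabla u_h$ is piecewise $\mathbb{P}_k$ and the Oswald average $I_{os}$ returns a continuous piecewise polynomial, while $\sigma u_h\in C^0(\bar\Omega)$ makes $i_h(\sigma u_h)$ well defined. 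With this choice the error splits as
\[
\mathcal{L}u_h-v_h=(\beta-\beta_h)\cdot\nabla u_h+\bigl(\beta_h\cdot\nabla u_h-I_{os}(\beta_h\cdot\nabla u_h)\bigr)+\bigl(\sigma u_h-i_h(\sigma u_h)\bigr),
\]
and each of the three terms is bounded in turn.

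The middle term is exactly the quantity controlled by \eqref{oswald_int}: inserting the factor $|\beta_h|^{\frac12}|\beta_h|^{-\frac12}$ and using $|\beta_h|\le\|\beta\|_{L^\infty(\Omega)}$ gives $\|h^{\frac12}(\beta_h\cdot\nabla u_h-I_{os}(\beta_h\cdot\nabla u_h))\|\le C_{\gamma\beta}\,s_{CIP}(u_h,u_h)^{\frac12}$, the stabilization contribution. For the first term I would use that $\beta_h$ is a piecewise affine interpolant of $\beta\in[W^{2,\infty}(\Omega)]^d$, so that $\|\beta-\beta_h\|_{L^\infty(\Omega)}\le Ch^2\|\beta\|_{W^{2,\infty}(\Omega)}$, and then trade this against the inverse inequality \eqref{inverse_eq}, $\|\nabla u_h\|\le Ch^{-1}\|u_h\|$, to obtain $\|h^{\frac12}(\beta-\beta_h)\cdot\nabla u_h\|\le Ch^{\frac32}\|\beta\|_{W^{2,\infty}(\Omega)}\|u_h\|$. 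The reaction term is handled by the discrete commutator property \eqref{discrete_commutator} with $n=0$ and $\varphi=\sigma$, which gives $\|\sigma u_h-i_h(\sigma u_h)\|\le c_{dc,0,\sigma}h\|u_h\|$ and hence an $h^{\frac32}c_{dc,0,\sigma}\|u_h\|$ contribution. Adding the three bounds yields \eqref{discrete_interp} with $\epsilon_{CIP}(h)\sim h^{\frac32}(\|\beta\|_{W^{2,\infty}(\Omega)}+c_{dc,0,\sigma})$.

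For the adjoint estimate \eqref{disc_adjoint} I would rewrite $\mathcal{L}^*z_h=-\beta\cdot\nabla z_h+\sigma_\beta z_h$ with $\sigma_\beta=\sigma-\nabla\cdot\beta$, which has precisely the same structure, and take $w_h:=-I_{os}(\beta_h\cdot\nabla z_h)+i_h(\sigma_\beta z_h)$. The streamline part is identical to the primal case (the sign is irrelevant, and \eqref{oswald_int} holds for any argument), and the reaction part uses \eqref{discrete_commutator} with $\varphi=\sigma_\beta\in W^{1,\infty}(\Omega)$; since $\nabla\cdot\beta$ is bounded in $W^{1,\infty}(\Omega)$ by $\|\beta\|_{W^{2,\infty}(\Omega)}$, the constant $c_{dc,0,\sigma_\beta}$ is again absorbed into an $\epsilon_{CIP}(h)$ of the stated form.

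The genuinely new ingredient, and the crux of the argument, is the use of the discrete approximation estimate \eqref{oswald_int}: it is precisely what allows the face-discontinuous streamline derivative $\beta_h\cdot\nabla u_h$ to be replaced by the continuous Oswald average at the cost of the CIP jump penalty. Once this is recognised, the remaining two terms are standard and only produce the higher-order remainder $\epsilon_{CIP}(h)$. The only technical care needed is to convert the weighted bound \eqref{oswald_int} to the unweighted $h^{\frac12}$-norm via the $L^\infty$ bound on $\beta_h$, and to balance the $O(h^2)$ interpolation error of $\beta$ against the $O(h^{-1})$ inverse inequality so as to land on the power $h^{\frac32}$.
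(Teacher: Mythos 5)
Your proposal is correct and follows essentially the same route as the paper: the same competitor built from the Oswald average of the interpolated streamline derivative plus an interpolant of the reaction term, the same three-way splitting of the error, and the same ingredients (the estimate \eqref{oswald_int}, $L^\infty$-interpolation of $\beta$ traded against an inverse inequality, and the discrete commutator property for $\sigma u_h$). The only difference is cosmetic notation ($\beta_h$ versus $i_h\beta$) and that you spell out the adjoint case slightly more explicitly than the paper, which simply notes the argument is analogous.
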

\begin{proof}
Since the proofs of the two results are similar we only detail the
arguments for \eqref{discrete_interp}.
First note that 
\begin{multline*}
\inf_{v_h \in V_h} \|h^{\frac12} (\mathcal{L} u_h - v_h)\| \leq
\|h^{\frac12} (i_h \beta \cdot
\nabla u_h - I_{os}( i_h \beta \cdot
\nabla u_h)) \| \\
+ h^\frac12 \|\beta -i_h\beta\|_{L^\infty(\Omega)} \|\nabla
u_h\| + h^\frac12\|\sigma u_h - i_h (\sigma u_h)\|.
\end{multline*}
Using \eqref{oswald_int}, interpolation in $L^\infty$, an inverse inequality and the
discrete commutator property \eqref{discrete_commutator} we conclude
\[
\inf_{v_h \in V_h} \|h^{\frac12} (\mathcal{L} u_h - v_h)\| \leq
C_{\beta\gamma} s_{CIP}(u_h,u_h)^{\frac12}\\
+ h^{\frac32} (\|\beta\|_{W^{2,\infty}(\Omega)} + c_{dc,0,\sigma}) \|u_h\|.
\]
\end{proof}
For the CIP-method we choose the $\pi_V$ and $\pi_W$ as the $L^2$-projection in order to exploit
orthogonality to ``filter'' the element residual. Observe that if
$u\in H^{\frac32+\varepsilon}(\Omega)$, $\varepsilon>0$ then
$s_{CIP}(u,\cdot)=0$. The consistencies \eqref{galortho1}
and \eqref{galortho2} hold from the consistency of
\eqref{discrete_bilin}.
The approximation result \eqref{approx1}, with $r=k+\tfrac12$ is a consequence of
standard results for the CIP-method (see for instance \cite{BFH06}.)
We now prove that the remaining assumptions for Proposition \ref{standard_conv} and Theorem
\ref{stab_conv} hold.
\begin{proposition}(Satisfaction of assumptions for
  \eqref{standardFEM} with CIP)\label{assumpCIP}
Let the bilinear forms of \eqref{standardFEM} be defined by
\eqref{discrete_bilin} and \eqref{jumpstab1}. Let $\gamma_{bc}\ge1$. Then \eqref{galortho1}--
\eqref{cont2} are
satisfied, with $\epsilon(h) \sim h^{\frac12}$.
\end{proposition}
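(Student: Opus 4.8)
The plan is to mirror the GLS argument of Proposition~\ref{assumpGLS}, replacing the least-squares control of the full residual by the interior-penalty control furnished by Lemma~\ref{stab_cont}, and exploiting throughout that here $\pi_V$ is the $L^2$-projection. Galerkin orthogonality \eqref{galortho1} is immediate from the consistency of \eqref{discrete_bilin} together with the orthogonality of $\pi_V$, so the real work is in verifying the stability \eqref{strong_bound_va}, the test-function bound \eqref{stab_v_bound3} and the continuity \eqref{cont2}.

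For \eqref{strong_bound_va} I would again take $v_a := \pi_V(e^{-\eta} u_h)$ with $\eta$ from \eqref{eq:stab_func} and split $a_h(u_h,v_a) = a_h(u_h,e^{-\eta} u_h) - a_h(u_h,\delta(e^{-\eta} u_h))$. The first term is bounded below by $\tfrac12\|e^{-\frac\eta2}u_h\|^2 + \tfrac12\int_{\partial\Omega}(\beta\cdot n)u_h^2 e^{-\eta}\,\mathrm{d}s$ via Lemma~\ref{basic_stability}, exactly as in \eqref{bas_stab_GLS}. The new ingredient is the error term: since $\pi_V$ is the $L^2$-projection, $\delta(e^{-\eta} u_h)$ is $L^2$-orthogonal to $V_h$, so $(\mathcal{L} u_h,\delta(e^{-\eta} u_h))_h = \inf_{v_h\in V_h}(\mathcal{L} u_h - v_h,\delta(e^{-\eta} u_h))_h$, which by Cauchy--Schwarz, the discrete interpolation estimate \eqref{discrete_interp} and Lemma~\ref{interp_lem} is bounded by $(C_{\gamma\beta}|u_h|_{S_p}+\epsilon_{CIP}(h)\|u_h\|)\,\Pi(h)\,h^{-\frac12}\|u_h\|$. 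For the stabilization contribution I would use that the exponential weight is smooth and $u_h$ continuous, so across each interior face $\jump{\nabla(e^{-\eta} u_h)} = e^{-\eta}\jump{\nabla u_h}$; hence $s_{CIP}(u_h,e^{-\eta} u_h)\ge \inf_\Omega e^{-\eta}\,s_{CIP}(u_h,u_h)$, with the boundary penalty treated as in GLS. Writing $s_{CIP}(u_h,v_a) = s_{CIP}(u_h,e^{-\eta} u_h) - s_{CIP}(u_h,\delta(e^{-\eta} u_h))$ and controlling the last term by $|u_h|_{S_p}|\delta(e^{-\eta} u_h)|_{S_p}\le \Pi(h)|u_h|_{S_p}\|u_h\|_V$ via Lemma~\ref{interp_lem}, I would absorb all error terms by arithmetic--geometric inequalities to obtain \eqref{strong_bound_va} with $c_s\sim \inf_\Omega e^{-\eta}$ and $\epsilon(h)\sim h^{\frac12}$, the $\epsilon_{CIP}(h)\sim h^{\frac32}$ contribution being of higher order.

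The bound \eqref{stab_v_bound3} I would obtain exactly as in \eqref{1st_arg_stab_v}--\eqref{1st_arg_stab_s}: a triangle inequality, $\|e^{-\eta} u_h\|_V\le \sup_\Omega e^{-\eta}\,\|u_h\|_V$, the face identity above giving $|e^{-\eta} u_h|_{S_{CIP}}\le \sup_\Omega e^{-\eta}\,|u_h|_{S_{CIP}}$, and Lemma~\ref{interp_lem} for the $\delta$ terms, so that $c_\eta\sim \sup_\Omega e^{-\eta}+\Pi(h)$. For the continuity \eqref{cont2} I would integrate by parts as in \eqref{bas_continuity} to write $a_h(v-\pi_V v,x_h) = (v-\pi_V v,\mathcal{L}^* x_h)_h + \int_{\partial\Omega}(\beta\cdot n)(v-\pi_V v)x_h\,\mathrm{d}s$; the boundary term is Cauchy--Schwarz against $\||\beta\cdot n|^{\frac12}x_h\|_{\partial\Omega}\le \|x_h\|_V$, while for the volume term I would again use the $L^2$-orthogonality of $v-\pi_V v$ to write $(v-\pi_V v,\mathcal{L}^* x_h)_h = \inf_{w_h}(v-\pi_V v,\mathcal{L}^* x_h-w_h)_h \le \|h^{-\frac12}(v-\pi_V v)\|\,\inf_{w_h}\|h^{\frac12}(\mathcal{L}^* x_h-w_h)\|$, bounding the second factor by the adjoint estimate \eqref{disc_adjoint} and the first by $\|v-\pi_V v\|_+$ (using $|\beta|\ne0$), which yields \eqref{cont2} with $c_a\sim 1 + C_{\gamma\beta\sigma}h^{\frac12}$.

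The principal difficulty, and the place where CIP genuinely differs from GLS, is that the stabilization no longer bounds the full streamline residual pointwise. The entire argument therefore hinges on the two facts that $\pi_V$ is the $L^2$-projection---so that every residual is tested only against the orthogonal complement of $V_h$---and that Lemma~\ref{stab_cont} recovers residual control from the gradient-jump seminorm. The main obstacle is thus to route each residual-type term consistently through these two facts while verifying that the accumulated error terms remain at order $h^{\frac12}$, so that the mesh condition \eqref{h_cond_strong} can indeed be met.
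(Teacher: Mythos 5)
Your proposal follows essentially the same route as the paper's proof: the same test function $v_a=\pi_V(e^{-\eta}u_h)$ with $\pi_V$ the $L^2$-projection, Lemma \ref{basic_stability} for the leading term, $L^2$-orthogonality combined with Lemma \ref{stab_cont} to control the residual pairing, the face identity $\jump{\nabla(e^{-\eta}u_h)}=e^{-\eta}\jump{\nabla u_h}$ for the stabilization part, and the adjoint estimate \eqref{disc_adjoint} for the continuity \eqref{cont2}. The only blemish is the factor $\Pi(h)\,h^{-\frac12}\|u_h\|$ in your bound for the residual term, which taken literally is $O(1)$ and would not give $\epsilon(h)\sim h^{\frac12}$; the correct factor is $\|h^{-\frac12}\delta(e^{-\eta}u_h)\|\leq C\,\Pi(h)\|u_h\|_V$, already supplied by Lemma \ref{interp_lem} through the $h^{-\frac12}$-weighted component of $\|\cdot\|_+$, so your conclusion stands.
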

\begin{proof}
To prove the stability \eqref{strong_bound_va} take $v_a =
\pi_V(e^{-\eta} u_h)$ and use Lemmas \ref{basic_stability}, the
orthogonality of the $L^2$-projection and
\ref{stab_cont} to obtain
\begin{multline}\label{stand_stab_CIP}
 a_h(u_h,\pi_V (e^{-\eta} u_h)) =  a_h(u_h, e^{-\eta} u_h)
 -(\mathcal{L} u_h - w_h,\delta (e^{-\eta} u_h)) \\ \ge -C_{\gamma} |u_h|_{S_p} \|\delta (e^{-\eta} u_h)\|_+- \epsilon_{CIP}(h) \|u_h\| \|h^{-\frac12}\delta (e^{-\eta} u_h)\|\\+ \frac12 \int_{\partial \Omega} (\beta \cdot n) u_h^2
e^{-\eta} ~\mbox{d}s+ 
\frac12 \|u_h e^{-\frac{\eta}{2}} \|^2.
\end{multline}
We also observe that for the stabilization 
\begin{equation}\label{s_stab_CIP}
s_p(u_h,\pi_V (e^{-\eta} u_h)) \ge s_p(u_h,u_h e^{-\eta} ) -
|u_h|_{S_p} |\delta (e^{-\eta} u_h) |_{S_p}. 
\end{equation}
Now observe that, since the jump of  $\nabla e^{-\eta}$ is zero we
have, using \eqref{stand_stab_CIP} and \eqref{s_stab_CIP}
\begin{multline*}
\frac12 \inf_{x \in \Omega} e^{-\eta} (\|u_h\|_V^2 + |u_h|^2_{S_p})
\leq \frac12 \|u_h e^{-\frac{\eta}{2}} \|^2 + \frac12 \int_{\partial
  \Omega} (\beta \cdot n) u_h^2 e^{-\eta} ~\mbox{d}s+  s_p(u_h,u_h
e^{-\eta} )\\
\leq  a_h(u_h,\pi_V (e^{-\eta} u_h)) + s_p(u_h,\pi_V (e^{-\eta} u_h))
\\|u_h|_{S_p} (C_{\gamma} \|\delta
(e^{-\eta} u_h)\|_++|\delta (e^{-\eta} u_h) |_{S_p})+ \epsilon_{CIP}(h) \|u_h\| \|h^{-\frac12}\delta
(e^{-\eta} u_h)\|
\end{multline*}
Using Lemma \ref{interp_lem} we deduce that
\eqref{strong_bound_va} holds with 
$$c_s = \frac12 \inf_{x \in \Omega}
e^{-\eta} \mbox{ and }
 \epsilon(h) \ge \Pi(h) (C_{\gamma} +   \epsilon_{CIP}(h)).
$$
For \eqref{stab_v_bound3} only the stabilization part differs from the
GLS case. Since the
jump of $\nabla e^{-\eta}$ is zero we immediately get
\[
|v_{a}(u_h)|_{S_p} \leq \sup_{x \in \Omega} e^{-\eta} |u_h|_{S_p} +
|\delta(e^{-\eta} u_h) |_{S_p} \leq \sup_{x \in \Omega} e^{-\eta}
|u_h|_{S_p} + \Pi(h) \|u\|_V
\]
and hence $c_\eta \ge \sup_{x \in \Omega} e^{-\eta}+\Pi(h)$.
The continuity \eqref{cont2} follows by observing that by
\eqref{stab_cont} there holds

\begin{multline} \label{CIPcont}
a_h(v- \pi_V v, x_h) =\inf_{w_h \in V_h}  (v - \pi_V v, \mathcal{L}^* x_h - w_h) +
\int_{\partial \Omega} (\beta \cdot n) (v- \pi_V v) x_h ~\mbox{d}s
\\
\leq \|v - \pi_V v\|_+(C_\gamma |x_h|_{S_p} + (C_\beta h^{\frac12}
\epsilon_{CIP}(h) + 1)\|x_h\|_V ).
\end{multline}
Where we observe that the boundary part must be controlled using the norm
$\|\cdot\|_V$.
\end{proof}
\begin{proposition}(Satisfaction of assumptions for
  \eqref{FEM} with CIP)\label{assumpCIP_new}
 Let the bilinear forms of \eqref{FEM} be defined by
\eqref{discrete_bilin} and \eqref{jumpstab1} for both
$s_p(\cdot,\cdot)$ and $s_a(\cdot,\cdot)$, together with the respective
boundary penalty terms of \eqref{stab_ops_bound}. Then the inequalities \eqref{galortho2}--\eqref{cont1} hold with $\tilde \epsilon(h)
\sim h^2$.
\end{proposition}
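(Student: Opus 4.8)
The plan is to follow the proof of Proposition~\ref{assumpGLS_new} almost line by line, the only genuine change being that the control of the full residual enjoyed by the GLS stabilization is now replaced by the discrete interpolation estimates of Lemma~\ref{stab_cont}. The Galerkin orthogonality \eqref{galortho2} is immediate from the consistency \eqref{consist1} of the form \eqref{discrete_bilin} and the linearity of $s_p,s_a$ (recall $z\equiv 0$). As in Proposition~\ref{assumpCIP} I take $\pi_V=\pi_W$ to be the element-wise $L^2$-projection, $\eta$ as in \eqref{eq:stab_func}, and the test functions $v_a(u_h):=\pi_V(e^{-\eta}u_h)$, $v_{a*}(z_h):=-\pi_V(e^{-\eta}z_h)$.

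For the forward stability \eqref{forward_stability} I would start from the splitting $a_h(u_h,\pi_V(e^{-\eta}u_h))=a_h(u_h,e^{-\eta}u_h)-a_h(u_h,\delta(e^{-\eta}u_h))$ and use the first identity of Lemma~\ref{basic_stability}, the choice \eqref{eq:stab_func} of $\eta$ bounding the volume reaction term below by $\tfrac12\|u_he^{-\eta/2}\|^2$. For the error term I would integrate by parts, exploit the $L^2$-orthogonality of $\pi_V$ to replace $\mathcal{L}u_h$ by $\mathcal{L}u_h-w_h$ with arbitrary $w_h\in V_h$, and then invoke \eqref{discrete_interp} to bound the residual by $C_\gamma|u_h|_{S_p}+\epsilon_{CIP}(h)\|u_h\|$, exactly as in \eqref{stand_stab_CIP}. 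The inflow boundary term, which carries an unfavourable sign, is absorbed into $\tilde c_\eta|u_h|^2_{S_p}$ via $s_{bc,-}\subset s_p$. The decisive point is the residual error: using \eqref{bas_eu_est} one has $\|h^{-1/2}\delta(e^{-\eta}u_h)\|\le Ch^{1/2}\|u_h\|$, so that $\epsilon_{CIP}(h)\|u_h\|\,\|h^{-1/2}\delta(e^{-\eta}u_h)\|\le C\epsilon_{CIP}(h)h^{1/2}\|u_h\|^2$, which with $\epsilon_{CIP}(h)\sim h^{3/2}$ produces precisely $\tilde\epsilon(h)\sim h^2$; every remaining cross term of the form $|u_h|_{S_p}\|u_h\|_V$ is split by a Young inequality so that its $\|u_h\|_V^2$ part is absorbed on the left (leaving $\tilde c_s=\tfrac14\inf_\Omega e^{-\eta}$) and its $|u_h|^2_{S_p}$ part is collected into $\tilde c_\eta$, just as in the GLS case.

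The adjoint stability \eqref{adjoint_stability} follows identically from the second identity of Lemma~\ref{basic_stability} and the adjoint estimate \eqref{disc_adjoint}, with the whole-boundary penalty contained in $s_a$ (see \eqref{stab_ops_bound}) supplying the control of the boundary contribution on all of $\partial\Omega$. For the bounds \eqref{stab_v_bound2}--\eqref{stab_v_bound1} I would use that $e^{-\eta}$ and $\nabla\eta$ are continuous across interior faces, so $\jump{\nabla(e^{-\eta}u_h)}=e^{-\eta}\jump{\nabla u_h}$ and the bulk CIP-seminorm of $e^{-\eta}u_h$ is controlled by $\sup_\Omega e^{-\eta}|u_h|_{S_p}$; the interpolation error $|\delta(e^{-\eta}u_h)|_{S_a}$ is bounded by $\Pi(h)\|u_h\|_V$ through Lemma~\ref{interp_lem}, while the mismatch between the whole-boundary penalty in $S_a$ and the inflow-only penalty in $S_p$ must be controlled by $\gamma_{bc}^{1/2}\||\beta\cdot n|^{1/2}u_h\|_{\partial\Omega}\le\gamma_{bc}^{1/2}\|u_h\|_V$, contributing the $O(1)$ term to $\breve\epsilon(h)$, exactly as in \eqref{SaboundbySpfinal}. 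The continuity \eqref{cont1} is obtained by integration by parts as in \eqref{CIPcont}, writing $a_h(v-\pi_Vv,x_h)=\inf_{w_h\in V_h}(v-\pi_Vv,\mathcal{L}^*x_h-w_h)+\int_{\partial\Omega}(\beta\cdot n)(v-\pi_Vv)x_h$ and applying \eqref{disc_adjoint} to replace $\|h^{1/2}(\mathcal{L}^*x_h-w_h)\|$ by $C_\gamma|x_h|_{S_a}+\epsilon_{CIP}(h)\|x_h\|_W$, the boundary term being estimated directly by $\|v-\pi_Vv\|_+\|x_h\|_W$.

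The main obstacle is the careful separation of the error contributions: one must ensure that the genuinely vanishing coefficient $\tilde\epsilon(h)$ collects only the residual mismatch $\epsilon_{CIP}(h)h^{1/2}\sim h^2$, while all terms proportional to the stabilization seminorms are routed into $\tilde c_\eta$ (permitted to be large) and the boundary-penalty discrepancy into $\breve\epsilon(h)$ (permitted to stay $O(1)$). Unlike the GLS case, where $s_{p,GLS}$ contains the entire operator $\mathcal{L}u_h$ and hence $\tilde\epsilon(h)=0$, here the CIP seminorm controls only the gradient jumps, so a nonzero $\tilde\epsilon(h)\sim h^2$ is unavoidable and arises precisely from the gap quantified in Lemma~\ref{stab_cont}.
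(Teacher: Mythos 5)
Your proposal follows the paper's proof essentially step for step: the same test functions $\pm\pi(e^{-\eta}\,\cdot\,)$, the same use of Lemma~\ref{basic_stability}, the $L^2$-orthogonality of the projection combined with Lemma~\ref{stab_cont} to isolate the residual gap and produce $\tilde\epsilon(h)=\epsilon_{CIP}(h)\Pi(h)\sim h^2$, the routing of the whole-boundary/inflow-only penalty mismatch into $\breve\epsilon(h)$ exactly as in \eqref{critical_step}, and the continuity argument of \eqref{CIPcont}. One terminological correction: for the CIP method $V_h=X_h^k\cap H^1(\Omega)$, so $\pi_V,\pi_W$ must be the $L^2$-projection onto this \emph{continuous} space (as in Proposition~\ref{assumpCIP}), not the element-wise projection, which would not map into $V_h$ and hence could not supply an admissible test function.
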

\begin{proof}
Starting from \eqref{stand_stab_CIP} with $v_a(u_h) := \pi_W(e^{-\eta} u_h)$ we have using Lemma \ref{interp_lem},
\begin{multline}\label{new_stab_CIP}
 \frac12 \|u_h e^{-\frac{\eta}{2}} \|^2 + \frac12 \int_{\partial \Omega} |\beta \cdot n| u_h^2
e^{-\eta} ~\mbox{d}s \leq a_h(u_h,\pi_W (e^{-\eta} u_h)) - \int_{\partial \Omega} (\beta \cdot n)_- u_h^2
e^{-\eta} ~\mbox{d}s \\
 + (C_{\gamma} |u_h|_{S_p}+ \epsilon_{CIP}(h) \|u_h\| ) \Pi(h) \|u_h\|
 \\
\leq  a_h(u_h,\pi_W (e^{-\eta} u_h)) + (\gamma_{bc}^{-\frac12} \sup_{x
  \in \Omega} e^{-\eta} + C^2_{\gamma} \sup_{x
  \in \Omega} e^{\eta}\Pi(h)^2) |u_h|_{S_p} \\+ \left(\frac14 \inf_{x \in \Omega}
e^{-\eta} + \epsilon_{CIP}(h) \Pi(h)\right) \|u_h\|^2.
\end{multline}
The last inequlaity is due to an arithmetic-geometric inequality. 
 Hence we see that \eqref{forward_stability} holds with 
$\tilde \epsilon(h)= \epsilon_{CIP}(h) \Pi(h) \sim h^{2}$ and $$\tilde
c_s = \frac14 \inf_{x \in \Omega} e^{-\eta},\quad \tilde c_\eta \ge C^2_\gamma \sup_{x
  \in \Omega} e^{\eta}  \Pi(h)^2 + \gamma_{bc}^{-1} \sup_{x
  \in \Omega} e^{-\eta} 
$$
The inequality \eqref{adjoint_stability} is proved
similarly as in the GLS case, taking this time $v_{a*}(z_h) := -\pi_V
(e^{-\eta} z_h)$, with $\pi_V$ the $L^2$-projection and
using the second inequality of Lemma \ref{basic_stability} and Lemma
\ref{interp_lem} after
integration by parts. 
\begin{multline*}
a_h(\delta (e^{-\eta} z_h),z_h) = \int_{\partial \Omega} (\beta \cdot
n) \delta (e^{-\eta} z_h) z_h ~\mbox{d}s + \inf_{w_h \in W_h} (\delta (e^{-\eta} z_h),
\mathcal{L^*} z_h - w_h) \\
\leq C \|\delta (e^{-\eta} z_h)\|_+ (|z_h|_{S_a} + \epsilon_{CIP}(h)
\|z_h\|) \leq C \Pi(h)
\|z_h\| (|z_h|_{S_a} + \epsilon_{CIP}(h) \|z_h\|).
\end{multline*}
Then we conclude as before.
For the stabilities \eqref{stab_v_bound2}  and \eqref{stab_v_bound1} we
proceed as in Proposition \ref{assumpGLS} and we only detail
the second inequality of \eqref{stab_v_bound2}. When using the CIP-method the primal and adjoint stabilization terms only
differ in the boundary contributions therefore, by symmetry, the second
inequality of \eqref{stab_v_bound1} follows identically.
Since the jump of $\nabla e^{-\eta}$ is zero we get
\begin{equation}
|v_{a}(u_h)|_{S_a} \leq \sup_{x \in \Omega} e^{-\eta} |u_h|_{S_p} +
|\delta(e^{-\eta} u_h)|_{S_p}+
\gamma_{bc}^{\frac12}\| |\beta \cdot n|^{\frac12} v_{a}(u_h)\|_{\partial \Omega}.
\end{equation}
The boundary penalty term is bounded using that by adding and
subtracting $e^{-\eta} u_h$, using a triangle inequality and the arguments of Lemma \ref{interp_lem} to obtain
\begin{multline}\label{critical_step}
\gamma_{bc}^{\frac12}\| |\beta \cdot n|^{\frac12} v_{a}(u_h)\|_{\partial \Omega} \leq \Pi(h) \|u_h\|_V + \gamma^{\frac12}_{bc}\||\beta|_+^{\frac12} u_h
e^{-{\eta}}\|_{\partial \Omega} \\
\leq (\Pi(h) + \gamma_{bc}^{\frac12}\sup_{x \in
  \Omega} e^{-\eta}) \|u_h\|_V
\end{multline}
Therefore \eqref{stab_v_bound2}  and \eqref{stab_v_bound1} hold with
\begin{equation}\label{constants}
\tilde c_\eta \ge  \sup_{x \in \Omega} e^{-\eta} + \Pi(h)\mbox{ and }\breve
\epsilon(h) \ge
\gamma_{bc}^{\frac12} \sup_{x \in \Omega}
e^{-\eta} + 2 \Pi(h).
\end{equation}
The proof of continuity \eqref{cont1} follows as in \eqref{CIPcont}.
\end{proof}
\subsection{The discontinuous Galerkin method}\label{subsec:DG}
In the case where discontinuous elements are used, i.e. $V_h = W_h :=
X_h^k$ the analysis is simplified by the fact that $\beta_h
\cdot \nabla u_h\in V_h$. Here we let $\pi_V$ and $\pi_W$ denote the element wise
$L^2$-projection onto $X_h^k$. The analysis is essentially the same as
for the CIP-method and when appropriate we will refer to the
previous analysis. Thanks to the local
character of the DG method the results hold without assuming any
quasi regularity of the meshes.
The consistency results \eqref{galortho1}
and \eqref{galortho2} are standard as well as the approximation result
\eqref{approx1}, with $r=k+\tfrac12$ (see \cite{EG06}). As before we collect the proofs of the remaining assumption 
in a proposition.
\begin{proposition}(Satisfaction of assumptions for
  \eqref{standardFEM} with DG)\label{DGassump}
Let the bilinear forms of \eqref{standardFEM} be defined by
\eqref{discrete_bilin} and \eqref{DGstab}. Then \eqref{galortho1}--\eqref{cont2} are
satisfied with $\epsilon(h) \sim h^{\frac12}$.
\end{proposition}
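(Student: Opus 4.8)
The plan is to mirror the CIP analysis of Proposition \ref{assumpCIP}, exploiting the simplification that, with discontinuous approximation, $\beta_h\cdot\nabla u_h$ already lies elementwise in $V_h$, so no Oswald-type quasi-interpolant is needed. Throughout I take $\pi_V=\pi_W$ to be the elementwise $L^2$-projection, $\eta$ as in \eqref{eq:stab_func}, and $v_a(u_h):=\pi_V(e^{-\eta}u_h)$, writing $\delta(e^{-\eta}u_h)=e^{-\eta}u_h-\pi_V(e^{-\eta}u_h)$. Consistency \eqref{galortho1} and the approximation \eqref{approx1} with $r=k+\tfrac12$ are already recorded.

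For the stability \eqref{strong_bound_va} I first apply the first identity of Lemma \ref{basic_stability} to $a_h(u_h,e^{-\eta}u_h)$, whose interior jump contributions cancel by construction, leaving the boundary term $\tfrac12\int_{\partial\Omega}(\beta\cdot n)u_h^2 e^{-\eta}~\mbox{d}s$ and a bulk term bounded below by $\tfrac12\|u_h e^{-\eta/2}\|^2$. I then write $a_h(u_h,v_a(u_h))=a_h(u_h,e^{-\eta}u_h)-a_h(u_h,\delta(e^{-\eta}u_h))$; in the correction term the volume part $(\mathcal{L}u_h,\delta(e^{-\eta}u_h))_h$ is filtered by $L^2$-orthogonality of $\pi_V$ and controlled by a DG analogue of \eqref{discrete_interp} carrying no stabilization term, since $\beta_h\cdot\nabla u_h\in V_h$ gives an elementwise residual of order $\epsilon_{DG}(h)\sim h^{3/2}$, while the centered-flux face term is bounded by $\gamma_{DG}^{-1/2}|u_h|_{S_p}$ times $\||\beta\cdot n|^{1/2}\{\delta(e^{-\eta}u_h)\}\|_{\mathcal{F}}\le\|\delta(e^{-\eta}u_h)\|_+$, estimated via Lemma \ref{interp_lem}. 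For the stabilization I use that $e^{-\eta}$ is single-valued across faces, so $\jump{e^{-\eta}u_h}=e^{-\eta}\jump{u_h}$ and hence $s_p(u_h,e^{-\eta}u_h)\ge\inf_{x\in\Omega}e^{-\eta}\,|u_h|_{S_p}^2$, exactly as in \eqref{s_stab_CIP}. As in GLS and CIP, taking $\gamma_{bc}$ large absorbs the inflow part of the boundary term, so \eqref{strong_bound_va} holds with $c_s=\tfrac12\inf_{x\in\Omega}e^{-\eta}$ and, after arithmetic–geometric inequalities, $\epsilon(h)\sim C\,\Pi(h)\sim h^{1/2}$, the volume residual contributing only at order $h^2$.

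The bound \eqref{stab_v_bound3} follows as in Proposition \ref{assumpCIP}: continuity of $e^{-\eta}$ gives $|v_a(u_h)|_{S_p}\le\sup_{x\in\Omega}e^{-\eta}\,|u_h|_{S_p}+|\delta(e^{-\eta}u_h)|_{S_p}$, and Lemma \ref{interp_lem} bounds the last term by $\Pi(h)\|u_h\|_V$, so $c_\eta\ge\sup_{x\in\Omega}e^{-\eta}+\Pi(h)$, the $V$-norm part being immediate. For the continuity \eqref{cont2} I integrate the advective term by parts elementwise and reassemble the face contributions through $\jump{wx_h}=\{w\}\jump{x_h}+\jump{w}\{x_h\}$; the crucial point is that the $\jump{w}\{x_h\}$ contributions cancel exactly against the centered-flux term of $a_h$, leaving, with $w=v-\pi_V v$,
\[
a_h(w,x_h)=(w,\mathcal{L}^* x_h)_h+\sum_{F\in\mathcal{F}_{int}}\int_F\beta\cdot n_F\,\{w\}\jump{x_h}~\mbox{d}s+\int_{\partial\Omega}(\beta\cdot n)\,w\,x_h~\mbox{d}s.
\]
The first term is filtered by orthogonality and bounded by $\|w\|_+\,\epsilon_{DG}(h)\|x_h\|$ via the adjoint residual estimate corresponding to \eqref{disc_adjoint}; the interior face term, now involving only the jump $\jump{x_h}$, is controlled by $\gamma_{DG}^{-1/2}\|w\|_+\,|x_h|_{S_p}$; and the boundary term by $\|w\|_+\|x_h\|_V$. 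This yields \eqref{cont2} with $c_a\ge 1+\gamma_{DG}^{-1/2}+C_{\gamma\beta\sigma}h^{1/2}$.

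The main obstacle is this last reorganization in \eqref{cont2}: unlike the conforming GLS and CIP cases there are genuine interior face terms, and one must verify that after integration by parts only the jumps $\jump{x_h}$ — which the penalty $s_{DG}$ controls — survive, while the averages $\{x_h\}$, which are not controlled by the reduced-norm stabilization, drop out. Establishing the DG residual estimate replacing Lemma \ref{stab_cont}, namely that $\beta_h\cdot\nabla u_h$ and $\beta_h\cdot\nabla x_h$ lie in $V_h$ so that the elementwise residuals of $\mathcal{L}$ and $\mathcal{L}^*$ are of order $h^{3/2}\|\cdot\|$ with no stabilization contribution, is the other point requiring care, though it is simpler than in the CIP case. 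Uniqueness then follows from the square system and the a priori estimate exactly as in Proposition \ref{standard_conv}.
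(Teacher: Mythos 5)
Your proposal is correct and follows essentially the same route as the paper: the same test function $\pi_V(e^{-\eta}u_h)$ with the elementwise $L^2$-projection, the same use of orthogonality against interpolated coefficients ($i_h\beta$, $\pi_0\sigma$) to reduce the volume residual to an $O(h^{3/2})$ perturbation, the same control of the remaining face terms by the DG penalty via Lemma \ref{interp_lem}, and the same integration-by-parts identity for \eqref{cont2} in which only $\{v-\pi_V v\}\jump{x_h}$ survives on interior faces. The constants $c_s$, $c_\eta$ and $\epsilon(h)\sim h^{1/2}$ you obtain agree with those in the paper.
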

\begin{proof}
Let $i_h \beta \in X_h^1$ be the Lagrange interpolant of $\beta$ with and $\pi_0 \sigma \in
X_h^0$ the projection of $\sigma$ on piecewise constant functions.
 For \eqref{strong_bound_va} take $v_a := \pi_V (
e^{-\eta} u_h)$, use $L^2$-orthogonality and apply Lemma
\ref{basic_stability} to obtain for $\gamma_{bc}$ large enough,
\begin{multline}\label{bas_stab_DG}
 a_h(u_h,\pi_V (e^{-\eta} u_h)) + s_p(u_h,\pi_V (e^{-\eta} u_h))=  a_h(u_h, e^{-\eta} u_h) + s_p(u_h,e^{-\eta} u_h)\\- a_h(u_h,\delta (e^{-\eta} u_h))- s_p(u_h,\delta(e^{-\eta} u_h))
 \ge ((i_h \beta - \beta) \cdot \nabla u_h + (\pi_0 \sigma - \sigma)
 u_h,  \delta (e^{-\eta} u_h))\\
-2 \sum_{K \in \mathcal{T}_h} \left<|\beta \cdot n| |[u_h]|, (1 +
  \gamma_{DG})|\delta (e^{-\eta} u_h)|\right>_{\partial K \setminus \partial \Omega}
+ \frac12 \inf_{x \in \Omega}
e^{-\eta} (\|u_h\|^2_V + |u_h|^2_{S_p})\\
\ge -|u_h|_{S_p} C_{\gamma} \|\delta (e^{-\eta} u_h)\|_+ -  \epsilon_{DG}(h) \|u_h\| \|h^{-\frac12}\delta (e^{-\eta} u_h)\|\\
+  \frac12 \inf_{x \in \Omega}
e^{-\eta} (\|u_h\|^2_V + |u_h|^2_{S_p})
\end{multline}
where 
$$\epsilon_{DG}(h) = h^{-\frac12} \|i_h \beta -
\beta\|_{L^\infty(\Omega)} + h^{\frac12} \|\pi_0 \sigma -
\sigma\|_{L^\infty(\Omega)}  \sim (\|\beta\|_{W^{2,\infty}(\Omega)} +
\|\sigma\|_{W^{1,\infty}(\Omega)}) )h^{\frac32}.$$ 
It follows that \eqref{strong_bound_va} holds with
$c_s = \tfrac12 \inf_{x \in \Omega}
e^{-\eta}$ and $\epsilon(h) \ge (C_\gamma + \epsilon_{DG}(h))
\Pi(h)$. The proof of \eqref{stab_v_bound3} is
analogous with the CIP-case with similar constants.
Considering finally the continuity \eqref{cont2} we have after an
integration by parts
\begin{multline}\label{cont_DG}
a_h(v - \pi_V v, x_h) = (v - \pi_V v, \mathcal{L}^* x_h) + \frac12
\sum_K \left<(\beta \cdot n) \{v - \pi_V v \}, [x_h] \right>_{\partial
  K \setminus \partial \Omega}\\
+ \left<(\beta \cdot n) (v - \pi_V v ),x_h \right>_{\partial
  \Omega}\\
= (v - \pi_V v, (i_h \beta - \beta) \nabla x_h + (\sigma - \pi_0
\sigma) x_h)  + \frac12
\sum_K \left<(\beta \cdot n) \{v - \pi_V v \}, [x_h] \right>_{\partial
  K \setminus \partial \Omega}\\
+ \left<(\beta \cdot n) (v - \pi_V v ),x_h \right>_{\partial
  \Omega}\\
\leq\|v - \pi_V v\|_+ (C_{\gamma} |x_h|_{S_a} + (C_{\beta} h^{\frac12}
\epsilon_{DG}(h) + 1)\|x_h\|_V).
\end{multline}
\end{proof}
\begin{proposition}
(Satisfaction of assumptions for
  \eqref{FEM} with DG)\label{DGassump_new}
 Let the bilinear forms of \eqref{FEM} be defined by
\eqref{discrete_bilin} and \eqref{DGstab} for both
$s_p(\cdot,\cdot)$ and $s_a(\cdot,\cdot)$ together with the respective
boundary penalty terms of \eqref{stab_ops_bound}. Then the
inequalities \eqref{galortho2}--\eqref{cont1} hold with $\tilde \epsilon(h) \sim h^2$.
\end{proposition}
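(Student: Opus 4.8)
The plan is to follow the template of Proposition \ref{assumpCIP_new} almost verbatim, since for the formulation \eqref{FEM} the discontinuous Galerkin analysis differs from the continuous interior penalty analysis only in how the element residual and the interelement jumps are handled. The Galerkin orthogonality \eqref{galortho2} is immediate: it follows from the strong consistency of the form \eqref{discrete_bilin} together with the fact that the adjoint solution is $z\equiv 0$, exactly as in the standard case. Likewise the continuity \eqref{cont1} has in effect already been produced in \eqref{cont_DG} during the proof of Proposition \ref{DGassump}; there an integration by parts moves the operator onto $x_h$, the residual $\mathcal{L}^*x_h$ is replaced by $(i_h\beta-\beta)\cdot\nabla x_h + (\sigma-\pi_0\sigma)x_h$ up to an $L^2$-orthogonal polynomial part, and the interelement average/jump together with the boundary integral are bounded by $\|v-\pi_V v\|_+(C_\gamma|x_h|_{S_a}+\|x_h\|_W)$, which is precisely \eqref{cont1}. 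Hence the real work is the verification of the discrete stability conditions \eqref{forward_stability}--\eqref{stab_v_bound1}.

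For the forward stability \eqref{forward_stability} I would take $v_a(u_h):=\pi_W(e^{-\eta}u_h)$ with $\eta$ from \eqref{eq:stab_func} and start from the inequality \eqref{bas_stab_DG}. The clean identity of Lemma \ref{basic_stability} supplies the bulk coercivity $\tfrac12\|u_h e^{-\eta/2}\|^2$ and the outflow boundary term, while the inflow part $\tfrac12\int_{\partial\Omega}(\beta\cdot n)_- u_h^2 e^{-\eta}$ is moved to the right and absorbed by the penalty $s_{bc,-}\subset s_p$, producing a $\gamma_{bc}^{-1}\sup_\Omega e^{-\eta}\,|u_h|_{S_p}^2$ contribution. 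The two error terms of \eqref{bas_stab_DG}, namely $C_\gamma|u_h|_{S_p}\|\delta(e^{-\eta}u_h)\|_+$ and $\epsilon_{DG}(h)\|u_h\|\,\|h^{-1/2}\delta(e^{-\eta}u_h)\|$, are controlled by Lemma \ref{interp_lem}: the first becomes a further $|u_h|_{S_p}^2$ term of weight $C_\gamma^2\sup_\Omega e^{\eta}\Pi(h)^2$, and the second, after an arithmetic--geometric split, yields $\tilde\epsilon(h)=\epsilon_{DG}(h)\Pi(h)\sim h^{3/2}\cdot h^{1/2}=h^2$, which matches the claimed order. This reproduces \eqref{new_stab_CIP} with $\tilde c_s=\tfrac14\inf_\Omega e^{-\eta}$ and $\tilde c_\eta$ as in Proposition \ref{assumpCIP_new}. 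The adjoint stability \eqref{adjoint_stability} is obtained symmetrically with $v_{a*}(z_h):=-\pi_V(e^{-\eta}z_h)$, using the second identity of Lemma \ref{basic_stability}; after integration by parts the term $a_h(\delta(e^{-\eta}z_h),z_h)$ is split by $L^2$-orthogonality into the adjoint residual perturbation and a boundary integral and bounded by $C\Pi(h)\|z_h\|(|z_h|_{S_a}+\epsilon_{DG}(h)\|z_h\|)$. Crucially, here the penalty on $z_h$ is active on the \emph{whole} boundary, so the boundary term is controlled by $|z_h|_{S_a}$ rather than only on the inflow.

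For the bounds \eqref{stab_v_bound2} and \eqref{stab_v_bound1} I would argue as in Proposition \ref{assumpGLS_new}: the left inequalities follow from a triangle inequality and Lemma \ref{interp_lem}, and since the DG bulk stabilization is the same jump penalty for $s_p$ and $s_a$, by symmetry it suffices to detail the second inequality of \eqref{stab_v_bound2}. Adding and subtracting $e^{-\eta}u_h$ and using that $e^{-\eta}$ is continuous, so $[e^{-\eta}u_h]=e^{-\eta}[u_h]$, bounds the jump part of $|v_a(u_h)|_{S_a}$ by $\sup_\Omega e^{-\eta}|u_h|_{S_p}+|\delta(e^{-\eta}u_h)|_{S_p}$. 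I expect the main obstacle to be the boundary contribution, i.e. exactly the critical step \eqref{critical_step}: since $s_a$ penalizes the entire boundary whereas $s_p$ penalizes only the inflow, the boundary penalty of $v_a(u_h)$ cannot be recovered from $|u_h|_{S_p}$ and must instead be estimated through the $V$-norm, which already contains the full $\||\beta\cdot n|^{1/2}\cdot\|_{\partial\Omega}$; this routes an $O(\gamma_{bc}^{1/2})$ term into $\breve\epsilon(h)$. The resulting constants are $\tilde c_\eta\ge \sup_\Omega e^{-\eta}+\Pi(h)$ and $\breve\epsilon(h)\ge \gamma_{bc}^{1/2}\sup_\Omega e^{-\eta}+2\Pi(h)$ as in \eqref{constants}. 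The second inequality of \eqref{stab_v_bound1} then follows by the same computation with the roles of $u_h$ and $z_h$ reversed, the stronger whole-boundary adjoint penalty now absorbing the boundary term so that no $\gamma_{bc}^{1/2}$ factor survives, exactly as in the corresponding passage of Proposition \ref{assumpGLS_new}. Collecting all the constants establishes \eqref{galortho2}--\eqref{cont1} with $\tilde\epsilon(h)\sim h^2$.
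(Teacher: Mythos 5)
Your proposal is correct and follows essentially the same route as the paper, whose own proof of this proposition simply takes $v_a:=\pi_W(e^{-\eta}u_h)$, $v_{a*}:=-\pi_V(e^{-\eta}z_h)$, invokes \eqref{bas_stab_DG} together with the manipulations of Proposition \ref{assumpCIP_new} for the stabilities, refers to the CIP arguments for \eqref{stab_v_bound2}--\eqref{stab_v_bound1}, and obtains \eqref{cont1} from \eqref{cont_DG}. The details you fill in (the DG analogue $[e^{-\eta}u_h]=e^{-\eta}[u_h]$ of the CIP observation on $\jump{\nabla e^{-\eta}}$, the product $\epsilon_{DG}(h)\Pi(h)\sim h^2$, and the $\gamma_{bc}^{1/2}$ contribution to $\breve\epsilon(h)$) are all consistent with the paper's constants.
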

\begin{proof}
The stability \eqref{forward_stability} and \eqref{adjoint_stability}
follows by taking $v_a := \pi_W (
e^{-\eta} u_h)$ and $v_{a*} := -\pi_V (
e^{-\eta} z_h)$, using \eqref{bas_stab_DG} and the manipulations of Proposition
\ref{assumpCIP_new}. The proof of the inequalities \eqref{stab_v_bound2} and
\eqref{stab_v_bound1} use the same techniques as the corresponding
results for the CIP-method and result in similar constants. Finally
\eqref{cont1} follows from \eqref{cont_DG}.
\end{proof}

\subsection{Convergence of the error in the streamline derivative}
As already mentioned the natural norm for the above analysis would
include the $L^2$-norm of the $h^\frac12$-weighted streamline
derivative. Given the results of the previous section it is
straightforward to prove optimal convergence of the streamline
derivative both for \eqref{standardFEM} and \eqref{FEM}.
We only give the result for the
method \eqref{FEM} below. The proof of the result for \eqref{standardFEM} is identical.
\begin{proposition}
Let $u_h,z_h$ be the solution of \eqref{FEM} with bilinear form
\eqref{discrete_bilin} stabilized with one of the methods presented in
Sections \ref{subsec:GLS} -- \ref{subsec:DG}. Assume that the
conditions of Theorem \ref{stab_conv} are satisfied. Then there holds
\[
\|\beta \cdot \nabla (u - u_h)\|_h \leq C_{\eta\beta\sigma\gamma}
h^k |u|_{H^{k+1}(\Omega)}.
\]
\end{proposition}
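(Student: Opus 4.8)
The plan is to split the error as $u-u_h = \rho + \xi_h$ with $\rho := u - \pi_V u$ and $\xi_h := \pi_V u - u_h$, and to treat the two pieces separately. For the interpolation part, $\|\beta\cdot\nabla\rho\|_h \le \|\beta\|_{L^\infty(\Omega)}\|\nabla(u-\pi_V u)\|_h \le C h^k|u|_{H^{k+1}(\Omega)}$ directly from the approximation estimate \eqref{approx}, so the problem reduces to bounding $\|\beta\cdot\nabla\xi_h\|_h$ by $Ch^k|u|_{H^{k+1}(\Omega)}$. For the GLS method this is immediate, since the bulk part of $s_p(\cdot,\cdot)$ is exactly the weighted residual: $\gamma_{GLS}^{\frac12}\|(h|\beta|^{-1})^{\frac12}\mathcal{L}(u-u_h)\| \le |u-u_h|_{S_p} \le \tilde c_{as\gamma}h^{k+\frac12}|u|_{H^{k+1}(\Omega)}$ by Theorem \ref{stab_conv}; as $|\beta|$ is bounded away from zero this gives $\|\mathcal{L}(u-u_h)\|_h \le Ch^k|u|_{H^{k+1}(\Omega)}$, whence $\|\beta\cdot\nabla(u-u_h)\|_h \le \|\mathcal{L}(u-u_h)\|_h + \|\sigma(u-u_h)\| \le Ch^k|u|_{H^{k+1}(\Omega)}$, using $\|u-u_h\| \le Ch^{k+\frac12}$ and that $h^{\frac12}\|\sigma\|_{L^\infty(\Omega)}$ is moderate.

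For the CIP and DG methods the residual is not controlled directly, and I would recover the streamline derivative by a test-function argument. Let $r_h \in V_h$ be the discrete surrogate of $\beta\cdot\nabla\xi_h$ furnished by the stabilization: $r_h := I_{os}(\beta_h\cdot\nabla\xi_h)$ for CIP (the averaging operator of \eqref{oswald_int}) and $r_h := \beta_h\cdot\nabla\xi_h \in X_h^k$ for DG. Testing the first Galerkin orthogonality of \eqref{galortho2} with $w_h := h\,r_h \in W_h$ gives $a_h(\xi_h,w_h) = s_a(z_h,w_h) - a_h(\rho,w_h)$, and expanding the left-hand side with \eqref{discrete_bilin} isolates the target quantity,
\[
\|h^{\frac12}\beta\cdot\nabla\xi_h\|^2 = (\beta\cdot\nabla\xi_h,h\,r_h)_h - (\beta\cdot\nabla\xi_h,h(r_h-\beta\cdot\nabla\xi_h))_h,
\]
where the last term is bounded by $\|h^{\frac12}\beta\cdot\nabla\xi_h\|$ times $\|h^{\frac12}(\beta\cdot\nabla\xi_h-r_h)\| \le C\,|\xi_h|_{S_p}$, using \eqref{oswald_int} and $\beta_h\approx\beta$. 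The remaining contribution of $(\mathcal{L}\xi_h,w_h)_h$ is the reaction term $(\sigma\xi_h,h\,r_h)_h$, and for DG there are in addition the interelement jump terms of \eqref{discrete_bilin}; both are of lower order after a Cauchy–Schwarz inequality.

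The crux is to estimate every term carrying the test function $w_h=h\,r_h$ and to show each scales as $Ch^{k+1}\,\|\beta\cdot\nabla\xi_h\|_h$. Using the inverse inequalities \eqref{inverse_eq} one obtains, in the reduced norm \eqref{red_norm}, $\|w_h\|_W + |w_h|_{S_a} \le C h^{\frac12}\|\beta\cdot\nabla\xi_h\|_h$, since each derivative or trace acting on the piecewise polynomial $r_h$ costs a factor $h^{-\frac12}$ that is absorbed by the weight $h$. Then the continuity \eqref{cont1} gives $a_h(\rho,w_h) \le c_a\|\rho\|_+(|w_h|_{S_a}+\|w_h\|_W) \le Ch^{k+\frac12}\cdot h^{\frac12}\|\beta\cdot\nabla\xi_h\|_h$, while Cauchy–Schwarz gives $s_a(z_h,w_h) \le |z_h|_{S_a}\,|w_h|_{S_a} \le Ch^{k+\frac12}\cdot h^{\frac12}\|\beta\cdot\nabla\xi_h\|_h$, both invoking the a priori bounds of Theorem \ref{stab_conv} for $\|\rho\|_+$, $|z_h|_{S_a}$ and $|\xi_h|_{S_p}$. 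Collecting everything yields $h\,\|\beta\cdot\nabla\xi_h\|_h^2 = \|h^{\frac12}\beta\cdot\nabla\xi_h\|^2 \le Ch^{k+1}\|\beta\cdot\nabla\xi_h\|_h$, hence $\|\beta\cdot\nabla\xi_h\|_h \le Ch^k|u|_{H^{k+1}(\Omega)}$, which together with the interpolation estimate proves the claim. The main obstacle is precisely this bookkeeping of $|w_h|_{S_a}$, $\|w_h\|_W$ and (for DG) the jump terms via inverse estimates, so that every term except the quadratic streamline term is genuinely one power of $h^{\frac12}$ smaller and can be moved to the right-hand side and divided out.
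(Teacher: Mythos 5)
Your proposal is correct and follows essentially the same route as the paper: the GLS case is read off from the residual-based stabilization norm, and for CIP/DG the streamline derivative is recovered by testing the Galerkin orthogonality with (a scaled multiple of) $I_{os}(\beta_h\cdot\nabla\xi_h)$, controlling the difference $\beta_h\cdot\nabla\xi_h - I_{os}\beta_h\cdot\nabla\xi_h$ via \eqref{oswald_int} and the a priori bounds of Theorem \ref{stab_conv}. The only differences are organizational (you peel off the interpolation error first and invoke \eqref{cont1} where the paper uses direct Cauchy--Schwarz and absorbs factors by arithmetic--geometric inequalities), not substantive.
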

\begin{proof}
First consider the GLS-method. Add and subtract $\sigma(u - u_h)$ inside
the streamline derivative norm and use a triangle inequality to
obtain, using the previously obtained error estimates,
\[
\|\beta \cdot \nabla (u - u_h)\| \leq C_\gamma \|\beta\|_{L^\infty}^{-\frac12} h^{-\frac12} (|u-u_h|_{S_p} +
\|\sigma\|_{L^\infty(\Omega)} h^{\frac12} \|u-u_h\|) \leq C_{\gamma\beta\sigma} h^k |u|_{H^{k+1}(\Omega)}.
\]
For the CIP method we may write $\xi_h := \pi_V u - u_h$, where
$\pi_V$ is any interpolation operator with optimal approximation properties, and note that
by Galerkin orthogonality, interpolation in $L^\infty$, and inverse inequalities, we have
\begin{multline*}
\|\beta \cdot \nabla (u - u_h)\|^2 = (\beta \cdot \nabla (u -
u_h),\beta_h \cdot \nabla \xi_h - 
I_{os} \beta_h \cdot \nabla \xi_h ) -
(\sigma (u - u_h) , I_{os} \beta_h \cdot \nabla \xi_h) \\ - s_a(z_h, I_{os} \beta_h \cdot \nabla \xi_h)\\+ (\beta \cdot
\nabla (u - u_h),  (\beta -\beta_h) \cdot \nabla \xi_h) -  (\beta \cdot
\nabla (u - u_h), \beta \cdot \nabla (\pi_V u - u))\\
\leq C_\gamma\|\beta \cdot \nabla (u - u_h)\| (h^{-\frac12} |\xi_h|_{S_p}
+ \|\beta\|_{W^{1,\infty}(\Omega)} \|\xi_h\| + \|\beta \cdot \nabla
(u - \pi_V u)\|) \\
+  (C_{\gamma\beta\sigma} h^{-\frac12}  |z_h|_{S_a} + \|\sigma\|_{L^\infty(\Omega)} \|u-u_h\|)\| \beta_h \cdot \nabla \xi_h\|.
\end{multline*}
Here we have used the $L^2$-stability of the interpolation operator
$I_{os}$ and the inequality
\[
|s_a(z_h, I_{os} \beta_h \cdot \nabla \xi_h)| \leq |z_h|_{S_a}
C_{\gamma\beta\sigma} h^{-\frac12} \|\beta_h \cdot \nabla \xi_h\|.
\]
Observing that 
\[
\| \beta_h \cdot \nabla \xi_h\| \leq C \|\beta\|_{W^{1,\infty}(\Omega)}
\|\xi_h\| + \|\beta \cdot \nabla (u - u_h)\| +  \|\beta \cdot \nabla
(u - \pi_V u)\|
\]
and using suitable arithmetic-geometric inequalitites to absorb
factors $\|\beta \cdot \nabla (u - u_h)\|$ in the left hand side we
conclude that
\begin{multline*}
\|\beta \cdot \nabla (u - u_h)\|^2  \leq C_{\gamma\beta\sigma} \Bigl(h^{-1} |\xi_h|^2_{S_p}
+ \|\xi_h\|^2 + \|u - u_h\|^2\\+ h^{-1} |z_h|^2_{S_a}+ \|\beta \cdot \nabla
(u - \pi_V u)\|^2 \Bigr)\leq C_{\gamma\beta\sigma} h^{2k} |u|^2_{H^{k+1}(\Omega)}.
\end{multline*}
The last inequality is a consequence of the estimate
\[
\|u - u_h\|_V + |u-u_h|_{S_p} + |z_h|_{S_a} \leq C_{\gamma\beta\sigma} h^{k+\frac12} |u|_{H^{k+1}(\Omega)}
\]
of Theorem \ref{stab_conv} and standard approximation results on $\|u-
\pi_V u\|$ and $\|\beta \cdot \nabla
(u - \pi_V u)\|$.
The proof for the discontinuous Galerkin method is similar and left to
the reader.
\end{proof}

\subsection{The data assimilation case}
The aim of the methods presented in \cite{part1}, is to introduce a
framework where also ill-posed problems such as those arising in
inverse problems, or data assimilation problems can
be included, without modifying the method. We will therefore in this section discuss the case where
 {\emph{data is given on the outflow boundary}} in equation
 \eqref{model_problem} as a model case of data assimilation. By the reversibility of the transport equation
 under our assumptions on $\beta$ this problem is not ill-posed on the
continuous level. 
However on the discrete level methods based on upwinding are
likely to experience difficulties. Since our framework does not rely
neither on upwinding nor on coercivity, this case can be included with only minor modifications
in the formulations without any loss of
stability. Consider the problem \eqref{model_problem} with the
boundary condition $u = g$ on $\partial \Omega_+$.  Let the
formulation \eqref{FEM} be defined by the  bilinear form \eqref{discrete_bilin}
and the stabilization term $s_p(\cdot,\cdot)$ for $X=GLS, CIP, DG$, 
\begin{equation}\label{assim_stab}
s_p(u_h,v_h) := s_{p,X}(u_h,v_h) + s_{bc,+}(u_h,v_h).
\end{equation}
The term $s_a(\cdot,\cdot)$ is unchanged. The data assimilation problem then typically consists in finding $u|_{\partial
  \Omega_{-}}$, which amounts to solving the backward transport equation. 
Observe that the boundary penalty for the primal equation now acts on
the outflow boundary.
The stabilization may then be chosen as any of the three methods
considered in Section \ref{subsec:GLS}--\ref{subsec:DG} and Theorem
\ref{stab_conv} holds under the same conditions as before, but the
stability will be given by a different weight function. Once the functions $v_{a}$ and
$v_{a*}$ have been identified the rest of the analysis is identical to
that of Sections  \ref{subsec:GLS}--\ref{subsec:DG}. We recall the
following inequalities from Lemma \ref{basic_stability}.
\begin{lemma}\label{daat_assim_stability}
For the bilinear form \eqref{discrete_bilin} there holds, for all
$\eta \in W^{1,\infty}(\Omega)$
\[
a_h(u_h,-e^{\eta} u_h) = -\frac12 \int_{\partial \Omega} (\beta \cdot
n) u_h^2 e^{\eta} ~\mbox{d}s + \int_{\Omega} u_h^2 \left(\frac12 \beta
\cdot \nabla \eta + \frac12 \nabla \cdot \beta - \sigma\right) e^{\eta}~\mbox{d}x,
\]
\[
a_h( e^{\eta} z_h, z_h) = \frac12 \int_{\partial \Omega} (\beta \cdot
n) z_h^2 e^{\eta} ~\mbox{d}s + \int_{\Omega} z_h^2 \left(\frac12 \beta
\cdot \nabla \eta + \frac12 \nabla \cdot \beta - \sigma\right) e^{\eta}~\mbox{d}x.
\]
\end{lemma}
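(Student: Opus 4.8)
The plan is to obtain both relations as direct specializations of Lemma~\ref{basic_stability} to the positive exponential weight $e^{+\eta}$, combined with the bilinearity of $a_h(\cdot,\cdot)$; no new integration by parts is required beyond the relations \eqref{z_defstart} and \eqref{eq:dg_cons} already used there. The weight $\eta$ is the one fixed in \eqref{eq:stab_func}, but it plays no role in establishing the equalities themselves and enters only afterwards to guarantee that the volume coefficients are nonnegative.

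For the primal identity I would start from the first identity of Lemma~\ref{basic_stability} on the $+\eta$ branch,
\begin{equation*}
a_h(u_h, e^{\eta} u_h) = \tfrac12 \int_{\partial\Omega}(\beta\cdot n) u_h^2 e^{\eta}~\mbox{d}s + \int_\Omega u_h^2\Bigl(-\tfrac12\beta\cdot\nabla\eta - \tfrac12\nabla\cdot\beta + \sigma\Bigr)e^{\eta}~\mbox{d}x,
\end{equation*}
and apply linearity of $a_h$ in the second argument to insert the factor $-1$. This reverses the sign of the boundary integral and of the full volume coefficient, and rewriting $-(-\tfrac12\beta\cdot\nabla\eta - \tfrac12\nabla\cdot\beta + \sigma)$ as $\tfrac12\beta\cdot\nabla\eta + \tfrac12\nabla\cdot\beta - \sigma$ produces exactly the stated right-hand side for $a_h(u_h,-e^{\eta}u_h)$. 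For the adjoint identity the weight already sits in the first slot of $a_h$, so I would instead invoke the second identity of Lemma~\ref{basic_stability} on the same $+\eta$ branch, re-deriving it if preferred by repeating the computation \eqref{z_defstart}--\eqref{eq:dg_cons} with $e^{-\eta}$ replaced by $e^{+\eta}$.

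The step demanding the most care is the sign bookkeeping in the adjoint identity. The inner-derivative contribution enters through $\nabla e^{\pm\eta} = \pm(\nabla\eta)e^{\pm\eta}$, so passing from the $e^{-\eta}$ computation of Lemma~\ref{basic_stability} to the present case changes the sign of the $\beta\cdot\nabla\eta$ term only, exactly as flagged at the close of that lemma's proof; by contrast the reaction-type terms carried by $\nabla\cdot\beta$ and $\sigma$ arise from $\sigma u_h$ and from the divergence generated by integration by parts, not from the weight, so their signs must be tracked separately against \eqref{z_defstart} before the coefficient is declared. Since both relations are exact equalities, no approximation estimate, inverse inequality, or mesh condition intervenes, and the argument closes once the signs are confirmed. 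I would finally record that, with $\eta$ as in \eqref{eq:stab_func}, each volume coefficient is bounded below by $\tfrac12$, which is the property for which these identities are invoked in the data assimilation analysis.
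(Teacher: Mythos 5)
Your route is the same as the paper's: Lemma \ref{daat_assim_stability} is presented there with no argument beyond ``we recall the following inequalities from Lemma \ref{basic_stability}'', i.e.\ precisely the specialization to the $+\eta$ branch plus bilinearity that you describe. For the first identity your derivation is complete and correct: the $+\eta$ branch of the first identity of Lemma \ref{basic_stability} gives the volume coefficient $-\tfrac12\beta\cdot\nabla\eta-\tfrac12\nabla\cdot\beta+\sigma$, and multiplying through by $-1$ yields exactly the stated right-hand side for $a_h(u_h,-e^{\eta}u_h)$.

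For the second identity, however, your own (correct) bookkeeping does not close the argument, and you should not have left it at ``the argument closes once the signs are confirmed''. As you observe, passing from $e^{-\eta}$ to $e^{+\eta}$ flips only the sign of the $\beta\cdot\nabla\eta$ contribution, while the $\nabla\cdot\beta$ and $\sigma$ terms are untouched by the weight. Carrying this out (or reading off the $+$ branch of the second identity of Lemma \ref{basic_stability}, or recomputing directly from \eqref{z_defstart}--\eqref{eq:dg_cons}) gives the volume coefficient $\tfrac12\beta\cdot\nabla\eta-\tfrac12\nabla\cdot\beta+\sigma$, whereas the statement prints $\tfrac12\beta\cdot\nabla\eta+\tfrac12\nabla\cdot\beta-\sigma$: the zeroth-order part appears with the opposite sign. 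The printed second identity therefore cannot be obtained by your (or the paper's) argument; it is a sign slip in the statement rather than a flaw in your method. You should have committed to the computed coefficient and flagged the discrepancy explicitly. Note that the slip is harmless for the intended application: with $\eta$ as in \eqref{eq:stab_func} one only uses $\left|\pm\bigl(\tfrac12\nabla\cdot\beta-\sigma\bigr)\right|\le\tfrac12\|2\sigma-\nabla\cdot\beta\|_{L^\infty(\Omega)}$, so the positivity of the volume term holds for either sign, but an exact equality is being claimed and your proof as written does not establish the one that is printed.
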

{
It follows that apart from the form of the exponential dependencies in the
constants nothing changes for the method \eqref{FEM}. The situation is
different for method \eqref{standardFEM}, since here the same test
function must be used in the forms $a_h(\cdot,\cdot)$ and
$s_p(\cdot,\cdot)$. We see that the choice $v_{a}(u_h):=-\pi_V (e^{\eta}
u_h)$ is necessary in $a_h(\cdot,\cdot)$, however due to the least
squares character of $s_p(\cdot,\cdot)$, the term can never have a
stabilizing effect for positive stabilization parameter when this
weight function is used. If instead
the stabilization parameters in \eqref{standardFEM} are chosen
\emph{negative} it is straightforward to show that the assumptions for
Proposition \ref{standard_conv} hold. This correpsonds to using
downwind fluxes instead of upwind fluxes. For more general problems
however, data are provided at some points along the
characteristics and it is therefore not possible for any given point
in the domain to decide whether the data will arrive from the upwind
or the downwind side unless the characteristic equations are solved for each
given data. Therefore the strategy of changing the sign of the
stabilization parameter inside the domain to match the location of
given data is not so attractive. In contrast the method \eqref{FEM}
does not use the flow direction for stability and can therefore be
applied in a much wider context, without tuning the stabilization parameters.}
\section{Numerical examples}\label{numerics}
Here we will give some simple numerical examples illustrating the
above theory. All computations were made using Freefem++ \cite{freefem}. We will only consider the CIP-method and compare
the results obtained by \eqref{standardFEM} with those of \eqref{FEM}
and in some cases with the
standard Galerkin method. We use an exact solution from \cite{CD11} adapted for
the case of vanishing viscosity with some different velocity
fields. We consider pure transport on conservation form and with a
non-solenoidal velocity field,
\begin{equation}\label{comptransp}
\nabla \cdot (\beta u) = f \quad \mbox{ on } \Omega.
\end{equation}
Three different velocity fields will be used,
\begin{equation}\label{beta1}
\beta_1:= \left(\begin{array}{c} -(x+1)^4+y \\ -8(y-x) \end{array} \right),
\end{equation}
\begin{equation}\label{beta2}
\beta_2:= -100 \left(\begin{array}{c} x+y \\ y-x \end{array} \right)
\end{equation}
or
\begin{equation}\label{badbeta}
\beta_3 = \left( \begin{array}{c}
  10  \,\mbox{arctan}(\frac{y-\frac12}{\varepsilon}) - \frac{x^2}{\varepsilon} \\[3mm]
\sin(x/\varepsilon) + \sin(y/\varepsilon)\end{array}\right).
\end{equation}
We will consider two different exact solutions, one smooth given by
\begin{equation}\label{exact_sol}
u(x,y) = 30x(1-x)y(1-y),
\end{equation}
obtained by choosing a suitable right-hand side $f$, and one non-smooth obtained by setting $f=0$, but introducing a
discontinuous function for the boundary data.
The smooth solution \eqref{exact_sol} satisfies homogeneous Dirichlet boundary conditions both
on the inflow and the outflow boundary and
has $\|u\| = 1$.
Unless otherwise stated, we use the stabilization parameters $\gamma_{CIP} =
0.01$ for piecewise affine approximation and $\gamma_{CIP}=0.001$ for
piecewise quadratic approximation. The boundary penalty term is taken
as $\gamma_{bc} = 0.5$ for \eqref{FEM} and $\gamma_{bc} = 1.0$ for
\eqref{standardFEM}. 

We have first 
considered the velocity field \eqref{beta1} and the solution \eqref{exact_sol}.  Note that $\inf_{x \in \Omega} \nabla
\cdot \beta_1 = -40$, making the problem strongly
noncoercive, since then $\sigma_0 = \tfrac12 \inf_{x \in \Omega} \nabla
\cdot \beta_1 = -20$. In our experience the standard Galerkin method performs relatively
well for the coercive case when approximating smooth solutions in two
space dimensions.
 As can be seen in figure \ref{smooth_plot}, this is not
the case here. Three contour plots are presented representing
computations using the standard Galerkin method, the method
\eqref{standardFEM} and \eqref{FEM} on a $64\times64$ unstructured
mesh. Note the oscillations that persist in the standard
Galerkin solution, in spite of the smoothness of the solution. On all the meshes we considered, up to $256 \times
256$ these oscillations remained, although their amplitude
decreased. This highlights the increased need of stabilization for
noncoercive problems.
\begin{figure}
{\centering
\includegraphics[width=5cm]{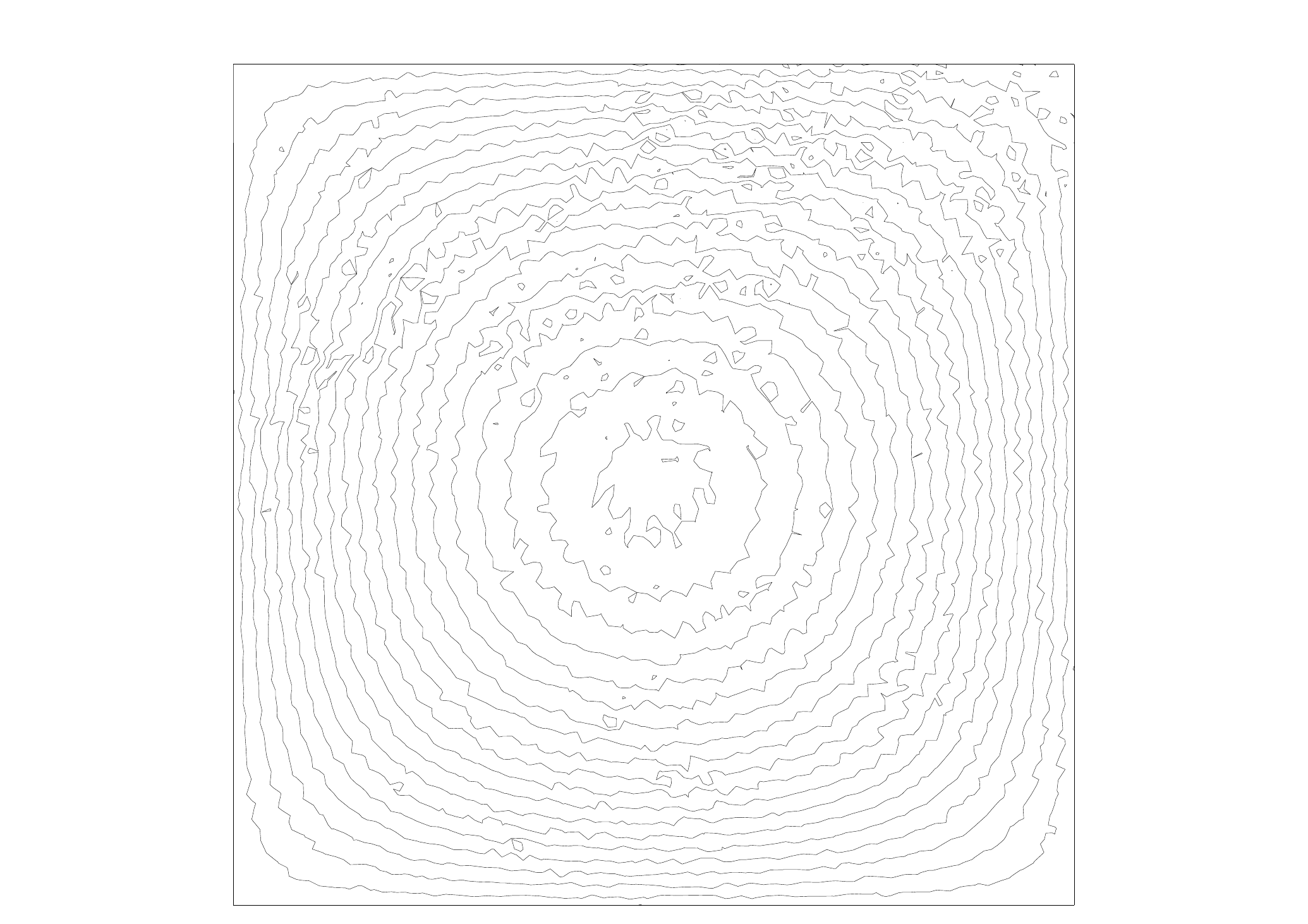}\hspace{-1.5cm}
\includegraphics[width=5cm]{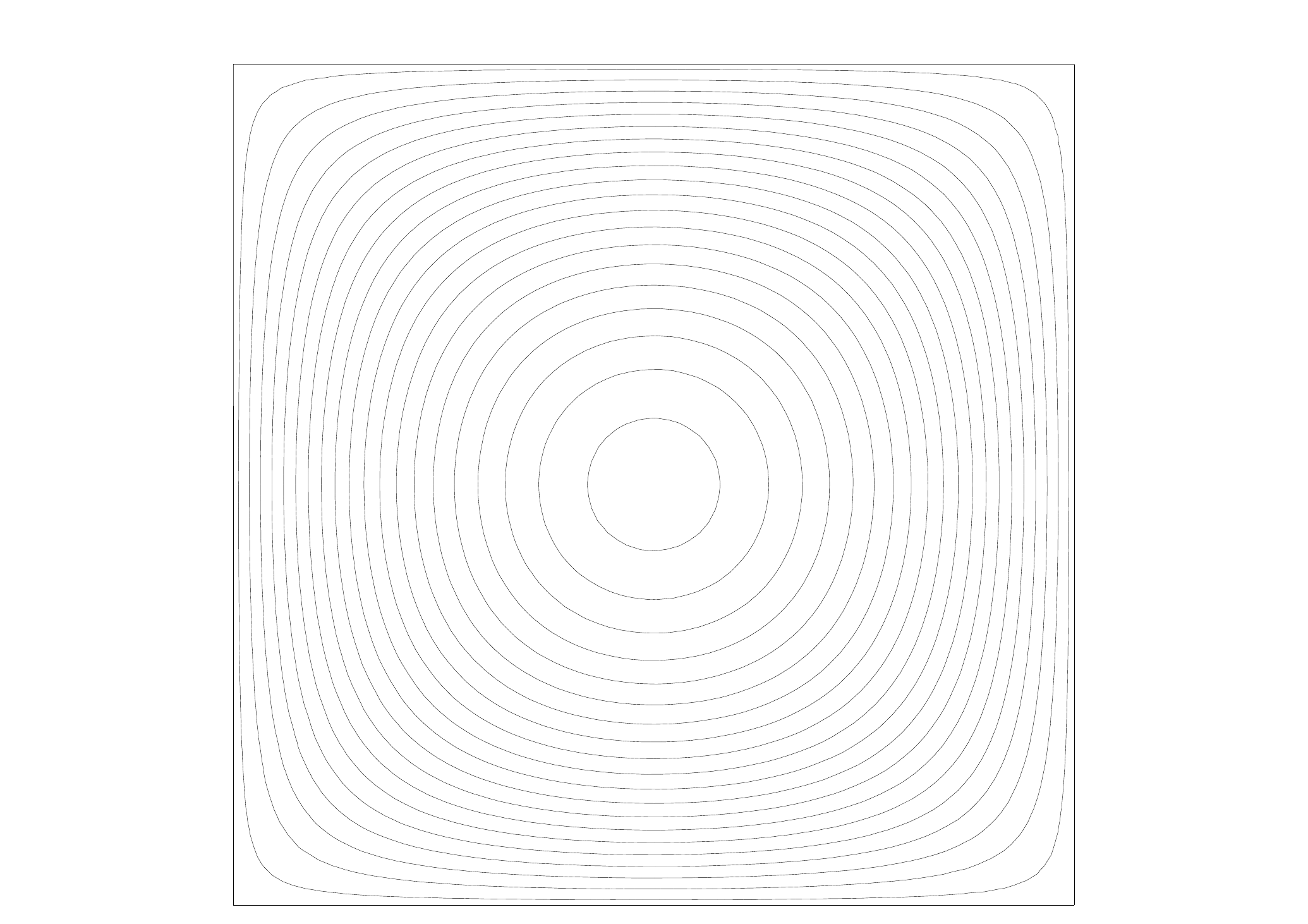}\hspace{-1.5cm}
\includegraphics[width=5cm]{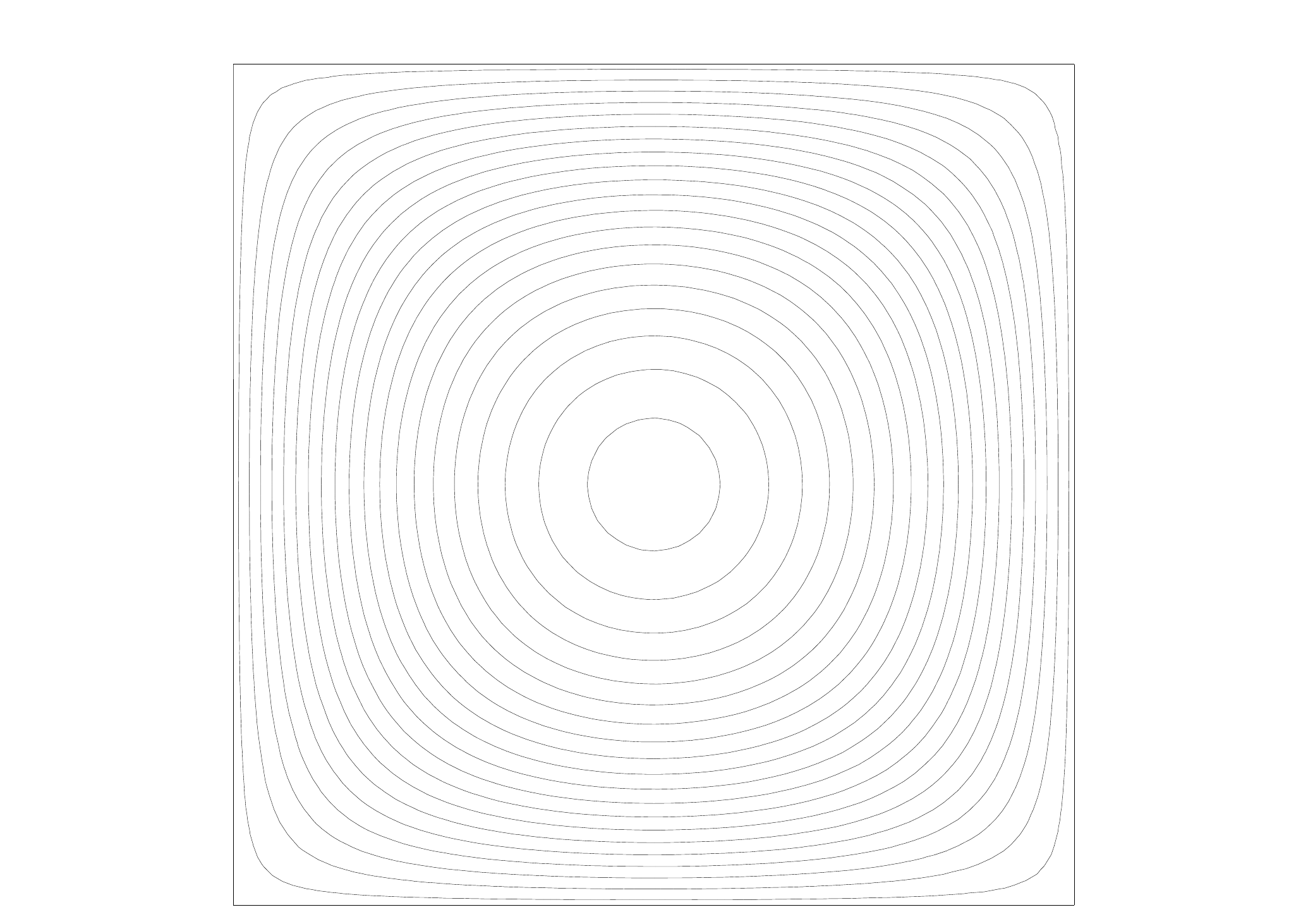}
}
\caption{Contour plots of approximations of the smooth solution \eqref{exact_sol}, $64\times 64$ mesh, affine approximation. From left
  to right, standard Galerkin, method \eqref{standardFEM}, method \eqref{FEM}.}\label{smooth_plot}
\end{figure}
In table \ref{SmoothP1} we present the errors in both
the $L^2$-norm and the streamline derivative norm, 
\begin{equation}\label{SDdef}
\|h^{\frac12} |\beta|^{-\frac12} \beta \cdot \nabla (u - u_h)\|,
\end{equation}
on $6$
consecutive unstructured meshes with $2^N$, $N=3,...,8$, elements on
each side and piecewise affine approximation. We note that the stabilized methods both have (and
sometimes exceed) the expected
convergence orders. Indeed the $L^2$-error converges as $O(h^{k+1})$
and the error in the streamline derivative \eqref{SDdef} as $O(h^{k+\frac12})$. As expected
the convergence of the standard Galerkin method is very uneven. It is
unclear if
the error in the streamline derivative converges at all. In table \ref{SmoothP2} the
same sequence of computations are reported using piecewise quadratic
elements. The stability of the standard Galerkin method is noticeably
improved. Nevertheless the errors of the stabilized methods are two
orders of magnitude smaller. The errors of formulation
\eqref{FEM} are slightly smaller than those of \eqref{standardFEM},
but on the other hand the former method uses twice as many degrees of
freedom as the latter.

Both methods \eqref{standardFEM} and \eqref{FEM} control
spurious oscillations in non-smooth exact solutions as can be seen in
figure \ref{rough_plot}, where the contour plots of a computation with 
non-smooth exact solution created by using the velocity field
\eqref{beta2} in \eqref{comptransp}, setting $f=0$ and the boundary
data equal one wherever $x>0.8$ and $y<0.5$ and zero elsewhere. 
\begin{figure}
{\centering
\includegraphics[width=5cm]{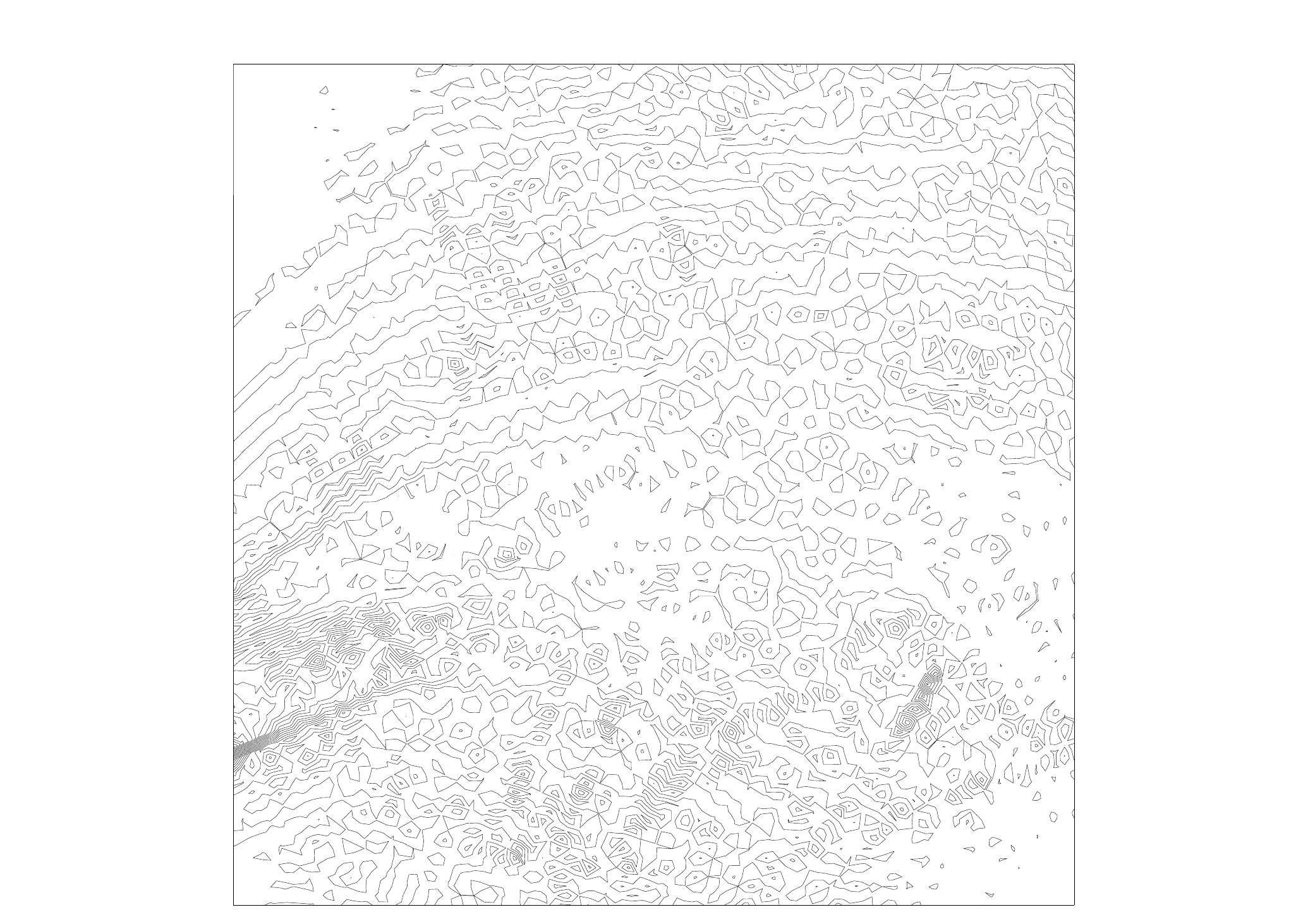}\hspace{-1.5cm}
\includegraphics[width=5cm]{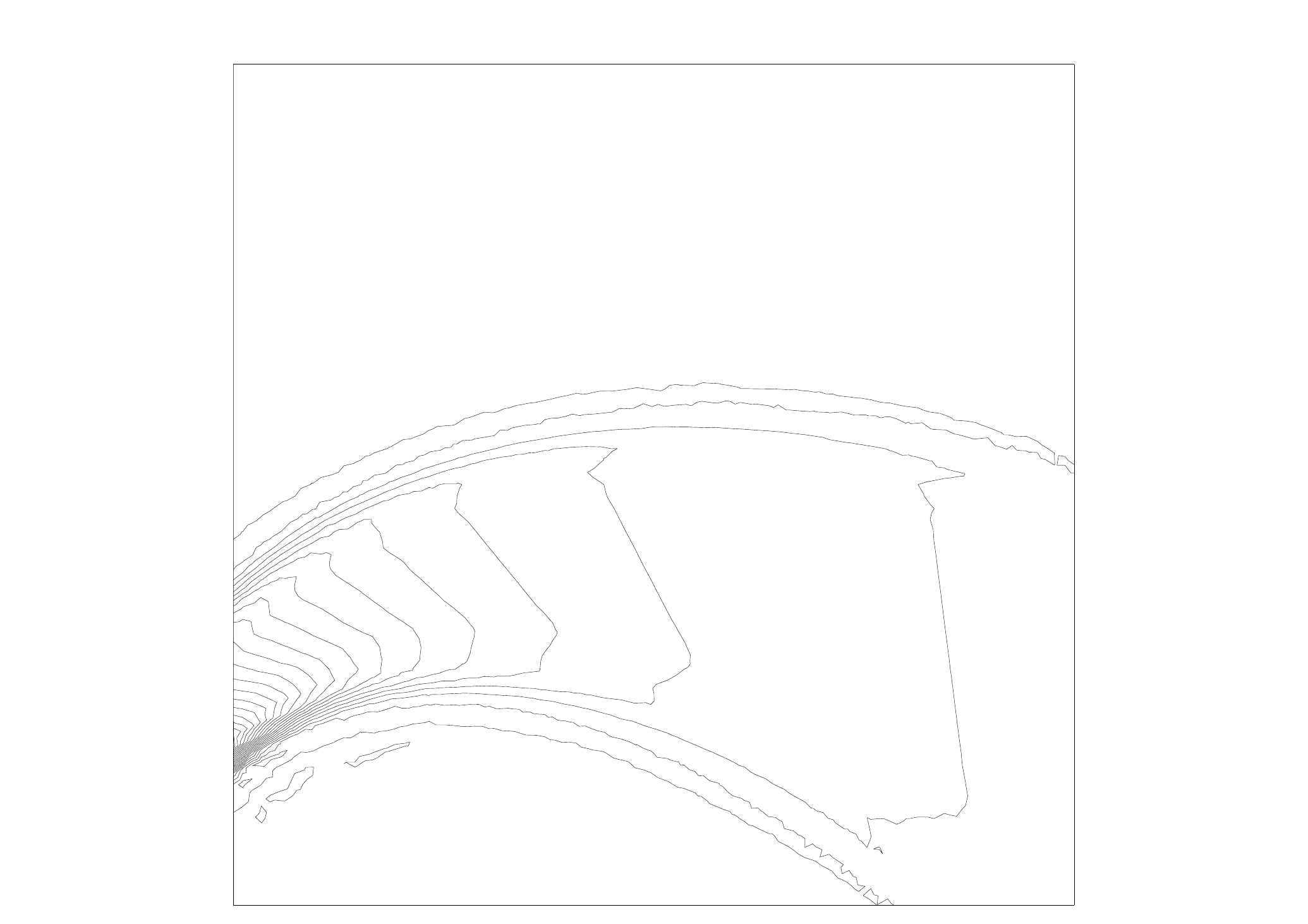}\hspace{-1.5cm}
\includegraphics[width=5cm]{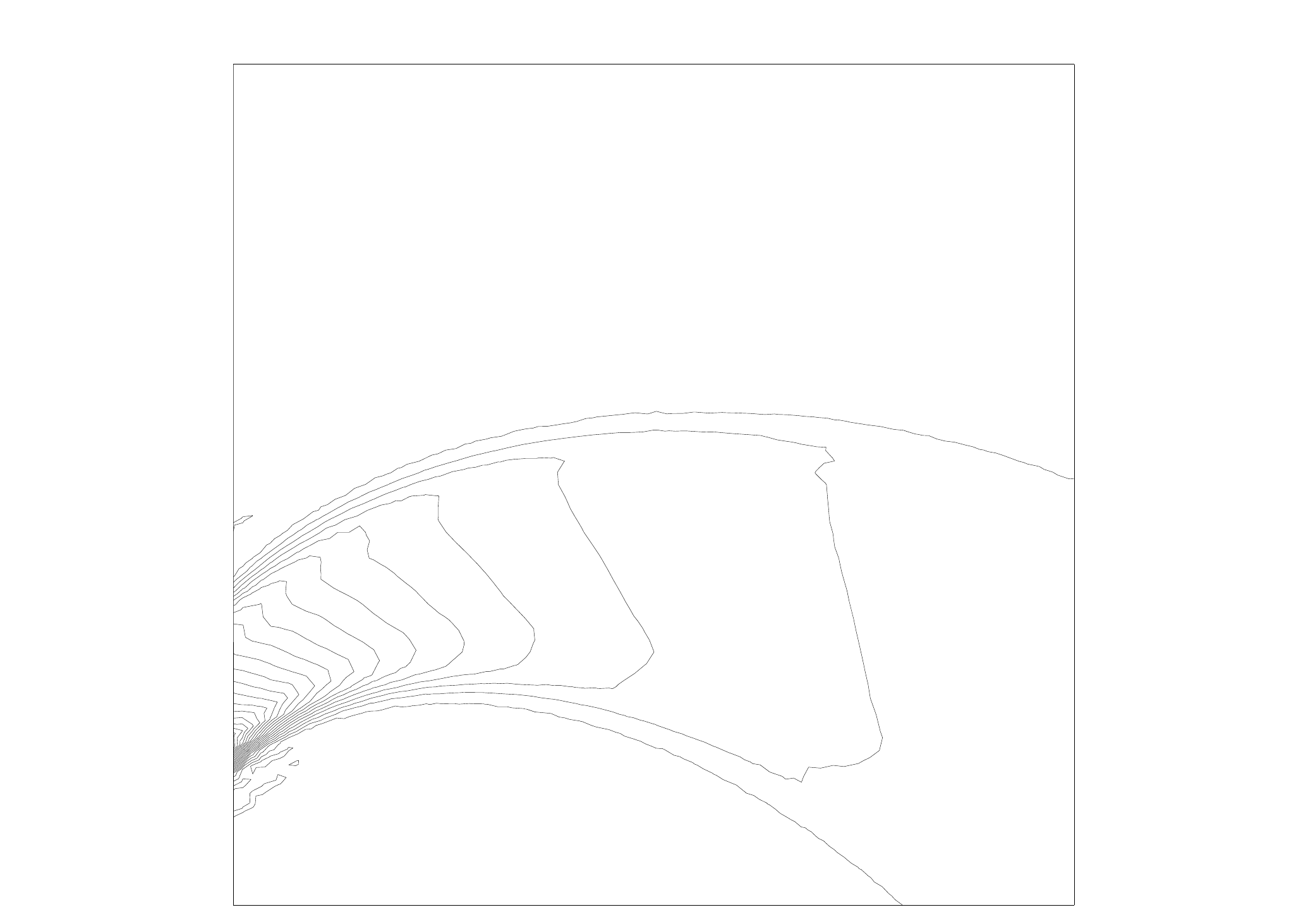}}
\caption{Discontinuous solution, $64\times 64$ mesh, affine approximation. From left
  to right, standard Galerkin, method \eqref{standardFEM}, method \eqref{FEM}.}\label{rough_plot}
\end{figure}
To show the increased
robustness of the formulation \eqref{FEM}, we propose to study the
problem \eqref{comptransp}, with the velocity field \eqref{badbeta}.
This velocity field is strictly speaking not covered by the analysis,
since for some values on $\varepsilon$ there may be points in the
domain where $\beta_3$ vanishes. Nevertheless the right hand side is chosen such that
the exact solution is given by \eqref{exact_sol}. We consider a fixed
$64\times 64$ unstructured mesh and vary $\varepsilon$, creating a series of
increasingly ill-posed problems where the divergence and the maximum
derivatives of $\beta$ behaves as
$-\tfrac{1}{\varepsilon}$.  The error in the
streamline derivative \eqref{SDdef} for varying $\varepsilon$ is
plotted in the left graphic of figure
\ref{stab_study}. It is fair to say that the method \eqref{FEM} (circle markers) outperforms
\eqref{standardFEM} (square markers). As $\varepsilon$ becomes small the error for the
approximations computed using \eqref{FEM} exhibits moderate growth
of order $O(\varepsilon^{-\frac13})$, but remain below $0.06$. Whereas
over half the approximations computed using \eqref{standardFEM} has an
error larger than $0.5$ and none below $0.1$. For $\varepsilon =
0.05$, the error is $120$ and the computed solution bears no
ressemblance to the exact one. { In the right plot of figure
\ref{stab_study} we study how the error
depends on the choice of the stabilization parameter $\gamma_{CIP}$.
We plot the error defined by \eqref{SDdef}, this time varying the
parameter $\gamma_{CIP}$ for three different
$\varepsilon$. Even when accounting for the increased number of
degrees of freedom in method \eqref{FEM} the error of
\eqref{standardFEM} is more than 50\% large in all the computations
and where \eqref{standardFEM} fails it is more than a factor 1000 larger.}
\begin{table}
\begin{center}
\begin{tabular}{|c|c|c|c|c|c|c|}
\hline
N & SG, $L^2$  & SG, SD & \eqref{standardFEM}, $L^2$ &
\eqref{standardFEM}, SD &  \eqref{FEM}, $L^2$ & \eqref{FEM}, SD\\
\hline
3 & 0.041 & 1.0   & 0.029 & 0.58 & 0.028 &  0.58 \\ \hline
4 & 0.025 & 0.88 &7.2E-3 &  0.20&  6.5E-3& 0.20  \\ \hline
5 & 0.010 & 0.48 & 1.7E-3 & 0.071 & 1.5E-3 & 0.069  \\ \hline
6 & 0.015 & 1.1 & 4.5E-4& 0.026 &4.0E-4  & 0.025  \\ \hline
7& 7.8E-3 & 0.76 & 1.1E-4& 9.1E-3 & 1.0E-4 & 8.7E-3  \\ \hline
8 & 1.9E-3 & 1.1 &2.5E-5 & 3.0E-3 & 2.4E-5  & 3.0E-3  \\ \hline
\end{tabular}
\caption{Errors of estimated quantities for the smooth
  solution approximated using piecewise affine elements.  SG means
  standard Galerkin and equations refer to methods used. ``L2''
  denotes the error in $L^2$-norm,  ``SD'' denotes the error in the
  streamline derivative norm defined in equation \eqref{SDdef}.}\label{SmoothP1}
\end{center}
\end{table} 
\begin{table}
\begin{center}
\begin{tabular}{|c|c|c|c|c|c|c|}
\hline
N & SG, $L^2$  & SG, SD & \eqref{standardFEM}, $L^2$ &
\eqref{standardFEM}, SD &  \eqref{FEM}, $L^2$ & \eqref{FEM}, SD\\ \hline
3 & 0.028 & 0.58 & 9.3E-4 & 0.060 & 7.5E-4 & 0.045 \\ \hline
4 & 4.6E-3 & 0.25 & 1.7E-4 & 0.014& 1.1E-4 & 8.7E-3\\ \hline
5 & 1.9E-3 & 0.17 & 2.7E-5 & 3.1E-3 & 1.4E-5 & 1.7E-3 \\ \hline
6 & 3.0E-4  & 0.042 &3.3E-6 & 5.1E-4& 1.7E-6 & 2.7E-4\\ \hline
7 & 3.3E-5 & 6.1E-3 & 4.4E-7 & 9.2E-5& 2.1E-7 & 4.7E-5\\ \hline
\end{tabular}
\caption{Errors of estimated quantities for the smooth
  solution approximated using piecewise quadratic elements.  SG means
  standard Galerkin and equations refer to methods used. ``L2'' denotes the error in $L^2$-norm,  ``SD'' denotes the error in the streamline derivative.}\label{SmoothP2}
\end{center}
\end{table} 
\begin{figure}
\centering
\includegraphics[width=6cm]{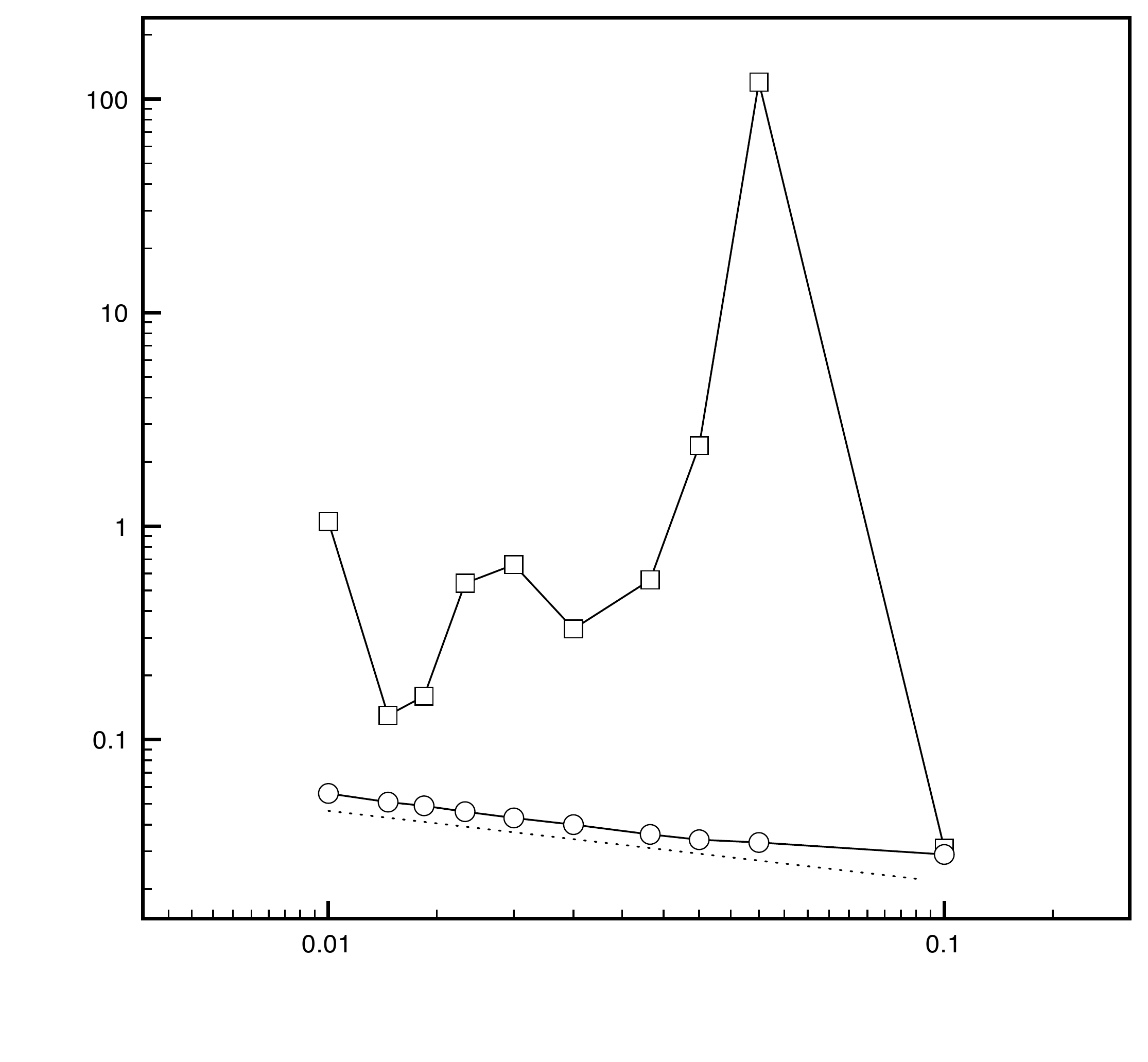}
\includegraphics[width=6cm]{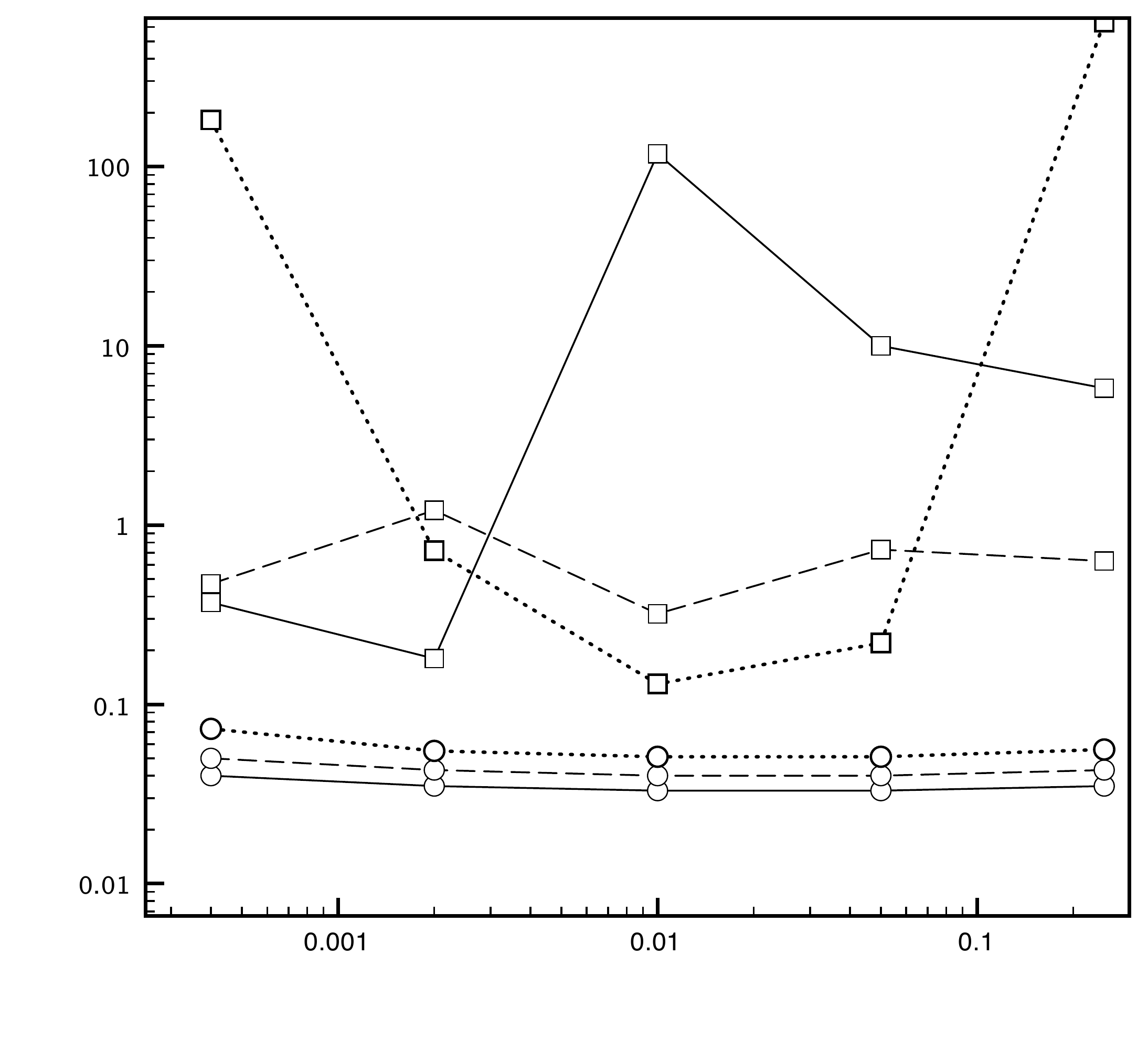}
\caption{Study of the error in the SD-norm 
error \eqref{SDdef}, circles: method \eqref{FEM}, squares: method \eqref{standardFEM}. Left: under variation of $\varepsilon$ in
\eqref{badbeta}, with $\gamma_{CIP}=0.01$, dotted line $O(\epsilon^{-\tfrac13})$. Right: under
variation of $\gamma_{CIP}$ for different $\epsilon$ (full line: $\epsilon =
0.05$; dashed line: $\epsilon=0.025$ and dotted line: $\epsilon=0.0125$)}\label{stab_study}
\end{figure}
\subsection{A data assimilation example}
Finally we consider a model problem for data assimilation where the
boundary conditions of the problem \eqref{comptransp} are imposed on
the outflow boundary instead of the inflow boundary. The method
\eqref{FEM} with the bilinear form \eqref{discrete_bilin} and the stabilizing term
\eqref{assim_stab}, with $X=CIP$ was applied. We consider the
test case with smooth solution \eqref{exact_sol} and the velocity field
\eqref{beta1}. In table \ref{dataassim} we give the computational
errors in the $L^2$-norm and the streamline norm \eqref{SDdef}, using either piecewise affine and piecewise
quadratic elements. Recalling the results in tables
\ref{SmoothP1} and \ref{SmoothP2} we see that the errors are
comparable. This is not surprising since the use of the adjoint
equation makes the two cases similar. Attempts to use \eqref{standardFEM} with weakly
imposed boundary conditions on the outflow 
and $\gamma_{CIP}>0$ were not fruitful. This is
expected since the stabilized methods on the form
\eqref{standardFEM} all are based on upwinding, which is unphysical in
this setting. 
{
Indeed the standard unstabilized Galerkin method
performs better than the standard stabilized method for this smooth
solution. When the stabilization parameter is chosen negative we
recover the expected behavior of the stabilized method. We give the
results of \eqref{standardFEM} using $\gamma_{bc}=-1.0$ and $\gamma_{CIP}=0.001$, $\gamma_{CIP}=0$, $\gamma_{CIP}=-0.01$ in table
\ref{standard_data_assim}.}
\begin{table}
\begin{center}
\begin{tabular}{|c|c|c|c|c|}
\hline
N & $\mathbb{P}_1$, $L^2$ &
$\mathbb{P}_1$, SD & $\mathbb{P}_2$, $L^2$ &
$\mathbb{P}_2$, SD \\
\hline
3 & 0.033 & 0.75 & 1.1E-3 &  0.052 \\ \hline
4 &7.1E-3 &  0.23&  1.5E-4 & 9.6E-3  \\ \hline
5 &1.6E-3 & 0.075 & 1.8E-5 & 1.8E-3  \\ \hline
6 & 4.1E-4& 0.026 & 2.0E-6  & 2.8E-4  \\ \hline
7& 1.0E-4& 8.9E-3 & 2.4E-7 & 4.8E-5  \\ \hline
8 & 2.4E-5 & 3.0E-3 & -- & -- \\ \hline
\end{tabular}
\caption{Data assimilation using \eqref{FEM}. Errors of estimated quantities for the smooth
  solution \eqref{exact_sol} computed with data given on the outflow
  boundary. Approximation using piecewise affine ($\mathbb{P}_1$) and
  quadratic ($\mathbb{P}_2$) elements.  ``L2'' denotes the error in $L^2$-norm,  ``SD'' denotes the error in the streamline derivative.}\label{dataassim}
\end{center}
\end{table} 
\begin{table}
\begin{center}
\begin{tabular}{|c|c|c|c|c|c|c|}
\hline
N & $\gamma_{1}$, $L^2$ &
$\gamma_{1}$, SD & $\gamma_{2}$, $L^2$ &
$\gamma_{2}$, SD & $\gamma_{3}$, $L^2$ &
$\gamma_{3}$, SD \\
\hline
3 & 0.044 & 3.48 & 0.034 &  2.8 &0.029 &  2.25\\ \hline
4 & 0.027 & 2.96& 0.01 & 1.2  & 6.7E-3&  0.74 \\ \hline
5 &0.27 & 31.0 & 2.7E-3 & 0.44  &1.6E-3 & 0.26  \\ \hline
6 & 2.74 & 455 & 1.1E-3  & 0.26 & 4.2E-4 & 0.094  \\ \hline
7& 6170 & 1.8E6 & 3.7E-4 & 0.11  &1.1E-4&  0.033 \\ \hline
8 & 67471 & 3.4E7 & 9.9E-5 & 0.041  & 2.5E-3& 0.011 \\ \hline
\end{tabular}
\caption{Data assimilation using the method \eqref{standardFEM} with
  the forms \eqref{discrete_bilin} and \eqref{assim_stab}, piecewise
  affine elements, $\gamma_{bc}=-1$
  and three different choices of $\gamma_{CIP}$ denoted by $\gamma_1$,
  $\gamma_2$ and $\gamma_3$. The CIP-stabilization parameters are
  assigned the values $\gamma_1=10^{-3}$, $\gamma_{2}=0$ and $\gamma_{3}=-10^{-2}$. Errors of estimated quantities for the smooth
  solution \eqref{exact_sol} computed with data given on the outflow
  boundary. ``L2'' denotes the error in $L^2$-norm,  ``SD'' denotes the error in the streamline derivative.}\label{standard_data_assim}
\end{center}
\end{table} 
\section{Concluding remarks}
We have extended the methods proposed in \cite{part1} to include
hyperbolic equations and shown how three stabilization methods known from the
litterature can be used to obtain stable and (quasi) optimally convergent approximations. Compared to
the standard stabilized method we show that the new method yields
existence of discrete solutions and (quasi) optimal error estimates under
much weaker assumptions on the mesh parameter (``$h^2$ small enough'' compared to
``$h^{\frac12}$ small enough''). We would like to stress that the method
proposed herein will not necessarily yield a more accurate solution than the
standard stabilized methods in cases where both methods work. The new
method however has increased robustness for noncoercive
problems. It also makes it easier to
incorporate other data than classical inflow boundary data. The idea of recasting the problem in an
optimisation framework opens interesting perspectives for optimal
control, inverse problems and data assimilation using observers.
\section*{Acknowledgment} Partial funding for this research was
provided by EPSRC
(Award number EP/J002313/1).
\bibliographystyle{plain}   

\end{document}